\documentclass[]{article}
\usepackage{graphicx}
\usepackage{amsmath,amsfonts,amssymb,amsthm,mathtools}
\usepackage{fancyhdr}
\usepackage{titlesec}
\usepackage{indentfirst}
\usepackage{booktabs}

\usepackage{hyperref}
\usepackage{xcolor}

\usepackage{algorithm}
\usepackage{algorithmic}

\usepackage{float}
\usepackage{subcaption}
\usepackage[export]{adjustbox}
\usepackage{placeins}

\usepackage[numbers,sort&compress,square]{natbib}

\mathtoolsset{showonlyrefs}

\newcommand{\R}{\mathbb{R}}
\newcommand{\E}{\mathbb{E}}

\numberwithin{equation}{section}
\newtheorem{theorem}{Theorem}[section]

\newtheorem{remark}[theorem]{Remark}

\begin{document}

\title{A Neural Score-Based Particle Method for \\the Vlasov--Maxwell--Landau System}

\author{Vasily Ilin\footnote{Department of Mathematics, University of Washington, Seattle, WA, USA. vilin@uw.edu.}
\quad
Jingwei Hu\footnote{Department of Applied Mathematics, University of Washington, Seattle, WA, USA. hujw@uw.edu.}}

\maketitle

\begin{abstract}
Plasma modeling is central to the design of nuclear fusion reactors, yet simulating collisional plasma kinetics from first principles remains a formidable computational challenge: the Vlasov--Maxwell--Landau (VML) system describes six-dimensional phase-space transport under self-consistent electromagnetic fields together with the nonlinear, nonlocal Landau collision operator.  A recent deterministic particle method for the full VML system \cite{bailo2024collisional} estimates the velocity score function via the blob method, a kernel-based approximation with $O(n^2)$ cost.  In this work, we replace the blob score estimator with score-based transport modeling (SBTM), in which a neural network is trained on-the-fly via implicit score matching at $O(n)$ cost.  We prove that the approximated collision operator preserves momentum and kinetic energy, and dissipates an estimated entropy.  We also characterize the unique global steady state of the VML system and its electrostatic reduction, providing the ground truth for numerical validation.  On three canonical benchmarks -- Landau damping, two-stream instability, and Weibel instability -- SBTM is more accurate than the blob method, achieves correct long-time relaxation to Maxwellian equilibrium where the blob method fails, and delivers $50\%$ faster runtime with $4\times$ lower peak memory.
\end{abstract}

\section{Introduction}
\label{sec:intro}

Controlled nuclear fusion promises a virtually limitless source of clean energy, but achieving it requires confining a plasma -- an ionized gas of electrons and ions -- at temperatures exceeding $10^8$\,K, where charged particles undergo frequent Coulomb collisions while generating and responding to electromagnetic fields \citep{chen2016introduction}.  Accurate numerical simulation of such plasmas is essential for reactor design and remains one of the grand challenges in computational physics.  The Vlasov--Maxwell--Landau (VML) system provides the first-principles kinetic description \citep{landau1936kinetische, villani2002review}, coupling six-dimensional phase-space transport with self-consistent electromagnetic fields and the Landau collision operator.  Solving the VML equations is exceptionally challenging due to the high dimensionality, multiscale dynamics, and the need to preserve conservation laws and entropy structure.

Particle-in-cell (PIC) methods \citep{hockney1988computer, birdsall2018plasma} are the dominant approach for high-dimensional kinetic simulations: particles are advanced in the Lagrangian frame while fields are solved on a spatial grid.  However, incorporating the Landau collision operator into PIC is nontrivial.  Classical Monte Carlo approaches \citep{takizuka1977binary, bird1994molecular, manheimer1997langevin} introduce statistical noise, and because they are not compatible with the standard Lagrangian formulation of PIC, collisions must be incorporated in a time-splitting manner, resulting in additional discretization errors.

A deterministic alternative is possible because the Landau collision operator can be rewritten as a \emph{transport equation} driven by the velocity score function $s(x,v) = \nabla_v \log f(x,v)$ \citep{villani1998spatially}, reducing the problem to estimating the score from particle data.  The first deterministic particle method for the homogeneous Landau equation was introduced in \citep{carrillo2020particle}, where the score was estimated via a kernel density estimate (the ``blob method'').  This approach was extended to the full VML system in \citep{bailo2024collisional}. By viewing the Landau operator as a ``collision force" in velocity space, the method naturally incorporates collisions into the velocity update of the standard PIC framework, preserving the main physical structures. However, the blob method requires regularization with a kernel in velocity space. Choosing an appropriate bandwidth for the kernel can be challenging. A poor choice could lead to inaccurate scores in low-density tail regions, causing systematic artifacts in long-time relaxation to equilibrium. Furthermore, the blob method requires computing a few pairwise kernel sums over all $n$ particles in each spatial cell, resulting in $O(n^2)$ computational cost and memory usage. In fact, due to the high computational cost, \citep{bailo2024collisional} employs a random batch method to reduce the per-step cost to $O(n^2/R)$ \citep{carrillo2022random}, but this comes at the expense of introducing additional random error.

Score-based transport modeling (SBTM) \citep{boffi2023probability} offers an alternative: a neural network $s_\theta$ is trained on-the-fly to approximate the score by minimizing an implicit score matching loss \citep{hyvarinen2005estimation}, at $O(n)$ cost per gradient step.  This was applied to the spatially homogeneous Landau equation in \citep{ilin2025transport, huang2024score}, demonstrating superior accuracy especially in sparse particle regimes, and error bounds for SBTM with Coulomb collisions were established in \citep{ilin2025stability}.

\textbf{Other related work.}  A convergence analysis of the blob method for the homogeneous Landau equation was conducted in \citep{carrillo2023convergence}. Score matching \citep{hyvarinen2005estimation} was introduced for learning unnormalized statistical models.  SBTM has been applied to various (spatially homogeneous) Fokker-Planck equations \citep{shen2022self, shen2023entropy, lu2024score} and density sampling \citep{ilin2025score}.  \citep{ilin2026kernels} proposed DiScoFormer, a pretrained Transformer for score estimation without per-distribution retraining.  


\textbf{Contributions.} Our contributions are as follows:
\begin{itemize}
\item We apply SBTM to the full Vlasov--Maxwell--Landau system, extending the spatially homogeneous work \citep{ilin2025transport} to the spatially inhomogeneous case with electromagnetic field coupling.  Our implementation builds on the collisional PIC framework of \citep{bailo2024collisional}, replacing \emph{only} the score estimation module.  This extension requires spatial localization of the score network via the hat kernel $\psi_\eta$, coupling to the Maxwell field solver, and handling distinct velocity distributions across spatial cells with a single network.
\item We prove that the approximated collision operator preserves \emph{momentum and kinetic energy} exactly, and dissipates an estimated entropy (Theorem~\ref{thm:collision}).
\item We characterize the unique global steady state of both the full VML system (Theorem~\ref{thm:equilibrium}) and its electrostatic reduction, the VPL system (Theorem~\ref{thm:equilibrium_vpl}), providing the theoretical ground truth against which we validate our numerical results.  A formal verification of Theorem~\ref{thm:equilibrium} in the Lean~4 proof assistant is carried out in \citep{ilin2026formalization}.
\item Through experiments on three canonical benchmarks, we demonstrate that SBTM achieves correct long-time \emph{relaxation to Maxwellian equilibrium} (where the blob method fails), \emph{converges faster in the number of particles}, and delivers ${\sim}50\%$ faster runtime with $2$--$4\times$ lower peak memory.
\end{itemize}

\section{Background and Method}
\label{sec:method}

\textbf{Notation.}  We write $x \in \Omega_x \subset \R^{d_x}$ for spatial position, $v \in \R^{d_v}$ for velocity, and $f(t,x,v)$ for the particle distribution function.  The spatial domain is discretized into $M$ cells of width $\eta = L/M$, with $\psi_\eta$ denoting the hat kernel used for spatial localization.  The system contains $n$ particles with positions $x_p$, velocities $v_p$, and weights $w_p$.  The collision frequency is $\nu$, and $K$ denotes the number of implicit score matching (ISM) gradient steps per time step $\Delta t$.  We use $s(x,v) = \nabla_v \log f(x,v)$ for the velocity score function and $s_\theta$ for its neural network approximation.

\subsection{The Vlasov--Maxwell--Landau System}
\label{sec:vml}
The Vlasov--Maxwell--Landau (VML) system describes the evolution of a collisional, magnetized plasma.  After non-dimensionalization, for a single electron species with an immobile neutralizing ion background, the distribution function $f(t, x, v)$ evolves according to
\begin{align}
\label{eq:vlasov}
&\partial_t f + v \cdot \nabla_x f + (E + v \times B) \cdot \nabla_v f = \nu \, Q[f, f],  \\
\label{eq:maxwell_full}
&\partial_t E = \nabla_x \times B - J, \qquad \partial_t B = -\nabla_x \times E, \qquad \nabla_x \cdot E = \rho - \rho_{\mathrm{ion}}, \qquad \nabla_x \cdot B = 0, 
\end{align}
where $t\geq 0$ is the time, $x\in \Omega_x\subset \R^{d_x}$ is the position, and $v\in \R^{d_v}$ is the particle velocity. The electric and magnetic fields $E(t,x)$ and $B(t,x)$ in \eqref{eq:vlasov}   are determined by the Maxwell's equations \eqref{eq:maxwell_full}, where $\rho(t,x)=\int_{\R^{d_v}}f \, dv$ and $J(t,x)=\int_{\R^{d_v}} v f \, dv$ are the charge density and current density, and $\rho_{\mathrm{ion}}$ is the uniform background ion density satisfying $\int_{\Omega_x}(\rho-\rho_{\mathrm{ion}})\, dx=0$. Finally, $\nu \geq 0$ is the collision frequency and $Q[f, f]$ is the Landau operator modeling particle collisions \citep{landau1936kinetische}:
\begin{equation}
\label{eq:landau}
Q[f, f](t,x, v) = \nabla_v \cdot \int_{\R^{d_v}} A(v - w) \big[ f(t,x,w) \nabla_v f(t,x,v) - f(t,x,v) \nabla_w f(t,x,w) \big] \, dw,
\end{equation}
where the collision kernel $A$ is given by $A(z) = |z|^{\gamma+2} \Pi(z)$ with $\Pi(z) = I_{d_v} - z z^\top / |z|^2$, and $\gamma = -d_v$ corresponds to the Coulomb interaction ($\gamma=-3$ for $d_v=3$ and $\gamma=-2$ for $d_v=2$).

In the electrostatic case, $B=0$, so $E$ is curl free. Then \eqref{eq:vlasov}-\eqref{eq:maxwell_full} reduce to the Vlasov--Poisson--Landau (VPL) system:
\begin{align}
\label{eq:vpl_1}
&\partial_t f + v \cdot \nabla_x f + E \cdot \nabla_v f = \nu\, Q[f,f], \\
\label{eq:vpl_2}
& \partial_t E=-J, \qquad E = -\nabla_x \phi, \qquad -\Delta_x \phi = \rho - \rho_{\mathrm{ion}}.
\end{align}

The VML system \eqref{eq:vlasov}-\eqref{eq:maxwell_full} possesses many important physical properties. We highlight a few that are relevant to the following discussion. First, the Landau operator \eqref{eq:landau} satisfies local conservation of mass, momentum, and energy, as well as the H-theorem \cite{villani2002review}:
\begin{equation}
\label{eq:original_Landau}
\int_{\R^{d_v}} Q[f,f]\,dv=\int_{\R^{d_v}} Q[f,f]v_i\,dv=\int_{\R^{d_v}} Q[f,f]|v|^2\,dv=0, \quad \int_{\R^{d_v}} Q[f,f]\log f\,dv\leq 0,
\end{equation}
where $v_i$ is the $i$-th component of $v$.
Using these properties, together with periodic boundary conditions in $x$, one can derive global entropy decay and conservation of mass and energy:
\begin{align}
\label{eq:H-theorem}
&\partial_t \iint_{\Omega_x\times \R^{d_v}} f\log f\,dv\,dx\leq 0, \quad \partial_t\iint_{\Omega_x\times \R^{d_v}} f\,dv\,dx=0, \\
\label{eq:energy-consv}
& \partial_t \left(\iint_{\Omega_x\times \R^{d_v}} \frac{1}{2}|v|^2f \,dv\,dx+\frac{1}{2}\int_{\Omega_x}(|E|^2+|B|^2)\,dx\right)=0.
\end{align}
Regarding momentum, one has
\begin{equation}
\partial_t \left( \iint_{\Omega_x\times \R^{d_v}} vf\,dv\,dx+\int_{\Omega_x} (E\times B)\,dx\right)=\int_{\Omega_x} \rho_{\text{ion}}E\,dx,
\end{equation}
so momentum is generally not conserved, except in the electrostatic case $B=0$, where $E=-\nabla_x\phi$ and $\rho_{\text{ion}}$ is constant. In this case, 
\begin{equation}
\label{eq:moment-consv}
\partial_t  \iint_{\Omega_x\times \R^{d_v}} vf\,dv\,dx=0.
\end{equation}

Due to the complexity of the VML system, constructing analytical solutions for numerical validation is challenging. As a result, the long-time behavior -- namely, the equilibrium distribution corresponding to a given initial condition -- serves as an important metric. We therefore establish the following theorems characterizing the steady state of the VML system and its reduced VPL version. Although the proofs are elementary, we have not found them in standard textbooks or the existing literature; we therefore provide complete proofs in  Appendix~\ref{app:proofs_equilibrium}.

\begin{theorem}[Global steady state of the VML system]
\label{thm:equilibrium}
Consider the Vlasov--Maxwell--Landau system \eqref{eq:vlasov}-\eqref{eq:maxwell_full} with collision frequency $\nu > 0$, periodic spatial domain $x \in \mathbb{T}^{3}$, $v \in \R^{3}$, and a uniform neutralizing background ion density $\rho_{\mathrm{ion}} > 0$.  Let $f(x,v) > 0$ be a sufficiently smooth steady-state solution with finite total energy and finite entropy, and $f$ and its derivatives decay sufficiently fast as $|v| \to \infty$.  Then $f$ must be a spatially uniform, zero-drift global Maxwellian:
\begin{equation}
\label{eq:equilibrium}
f_\infty(v) = \frac{\rho_{\mathrm{ion}}}{(2\pi T_\infty)^{3/2}} \exp\!\left(-\frac{|v|^2}{2T_\infty}\right), \qquad E_\infty = 0, \qquad B_\infty = \frac{1}{|\mathbb{T}^3|}\int_{\mathbb{T}^3} B_{\mathrm{init}}(x) \, dx,
\end{equation}
where $B_\infty$ is the spatial mean of the initial magnetic field (a conserved quantity), and the equilibrium temperature is
\begin{equation}
\label{eq:T_infty_statement}
T_\infty = \frac{2}{3\, \rho_{\mathrm{ion}} \, |\mathbb{T}^3|} \left(\mathcal{E}_0 - \frac{|B_\infty|^2}{2} |\mathbb{T}^3|\right) > 0,
\end{equation}
with $\mathcal{E}_0 = \iint \frac{1}{2}|v|^2 f_{\mathrm{init}} \, dv \, dx + \frac{1}{2}\int (|E_{\mathrm{init}}|^2 + |B_{\mathrm{init}}|^2) \, dx$ denoting the initial total energy.
\end{theorem}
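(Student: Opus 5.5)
The plan is the standard entropy argument. Since $f$ is a steady state, multiply the Vlasov equation \eqref{eq:vlasov} by $\log f$ and integrate over $\mathbb{T}^3\times\R^3$. The transport terms integrate to zero. Indeed, $v\cdot\nabla_x f\,\log f=v\cdot\nabla_x(f\log f-f)$ vanishes by periodicity. Likewise $(E+v\times B)\cdot\nabla_v f\log f=\nabla_v\cdot\big((E+v\times B)(f\log f-f)\big)$ vanishes by the decay in $v$, since $\nabla_v\cdot(v\times B)=0$. Hence $\nu\iint Q[f,f]\log f\,dv\,dx=0$. By the H-theorem \eqref{eq:original_Landau}, the pointwise entropy production $-\int Q\log f\,dv\ge0$ must then vanish at every $x$. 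Using the standard symmetrized form
\begin{equation}
-\int Q\log f\,dv=\tfrac12\iint f(v)f(w)\,(\nabla\log f(v)-\nabla\log f(w))^\top A(v-w)\,(\nabla\log f(v)-\nabla\log f(w))\,dv\,dw,
\end{equation}
together with $f>0$, we get that $\nabla_v\log f(v)-\nabla_v\log f(w)$ is parallel to $v-w$ for all $v,w$. The classical argument (Villani) then shows $f$ is a local Maxwellian:
\begin{equation}
f(x,v)=\rho(x)\,(2\pi T(x))^{-3/2}\exp\!\big(-|v-u(x)|^2/(2T(x))\big).
\end{equation}

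Next, insert the local Maxwellian into the collisionless steady equation $v\cdot\nabla_x f+(E+v\times B)\cdot\nabla_v f=0$. Writing $\log f=a(x)+b(x)\cdot v-c(x)|v|^2$ with $c=1/(2T)$, this becomes a polynomial identity in $v$:
\begin{equation}
v\cdot\nabla_x a+ v\cdot(\nabla_x b)\,v - |v|^2\,v\cdot\nabla_x c+(E+v\times B)\cdot(b-2cv)=0.
\end{equation}
Matching cubic terms gives $\nabla_x c=0$, so $T$ is constant. The quadratic terms give that the symmetric part of $\nabla_x b$ vanishes, using $(v\times B)\cdot v=0$. The linear terms give $\nabla_x a-2cE+B\times b=0$, and the constant term gives $E\cdot b=0$. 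The condition that $\nabla_x b$ be antisymmetric means $b$ is a Killing field; on $\mathbb{T}^3$ it is affine and periodic, hence constant. Therefore $u$ is constant.

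The remaining task is to kill $u$ and $E$, and this is the main obstacle, since it requires using Maxwell's equations. From steady Maxwell we have $\nabla\times E=0$, so $E=-\nabla\phi$ up to a constant harmonic part $\bar E$. Also $\nabla\times B=J=\rho u$ and $\nabla\cdot E=\rho-\rho_{\mathrm{ion}}$. Integrating $\nabla\times B=\rho u$ over the torus gives $0=u\int\rho$, so $u=0$ because the total mass is positive; hence $b=0$. The linear condition then reduces to $\nabla a=2cE$, i.e.\ $\rho=C e^{-\phi/T}$ with $-\Delta\phi=\rho-\rho_{\mathrm{ion}}$ and the harmonic part $\bar E=0$. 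Multiplying by $\rho-\rho_{\mathrm{ion}}$, integrating, and integrating by parts gives
\begin{equation}
\int|\nabla\phi|^2=\int(\rho-\rho_{\mathrm{ion}})\phi=-T\int(\rho-\rho_{\mathrm{ion}})\log\rho\le0,
\end{equation}
where the last step uses monotonicity of $\log$ and $\int(\rho-\rho_{\mathrm{ion}})=0$. Hence $E=0$ and $\rho\equiv\rho_{\mathrm{ion}}$. Then $\nabla\times B=0$ and $\nabla\cdot B=0$, so $B$ is constant.

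Finally, the constants are identified from the dynamics. The mean of $B$ is conserved, since $\partial_t\int B=-\int\nabla\times E=0$, which gives $B_\infty$. The mean of $E$ also evolves consistently: $\partial_t\int E=-\int J$, and it must vanish at the steady state, which is compatible with the above. Energy conservation \eqref{eq:energy-consv} gives $\tfrac32\rho_{\mathrm{ion}}T_\infty|\mathbb{T}^3|+\tfrac12|B_\infty|^2|\mathbb{T}^3|=\mathcal E_0$, which yields \eqref{eq:T_infty_statement}. Positivity of $T_\infty$ follows from the Cauchy--Schwarz inequality $\int|B_{\mathrm{init}}|^2\ge|\mathbb{T}^3||B_\infty|^2$ together with positive initial kinetic energy.
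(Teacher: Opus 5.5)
Your proposal is correct and follows the paper's proof step for step: zero entropy production forces a local Maxwellian, and matching powers of $v$ makes $T$ and $u$ constant. Amp\`ere's law then kills the drift, the Poisson--Boltzmann relation forces $\rho\equiv\rho_{\mathrm{ion}}$ and $E=0$, $B$ is curl- and divergence-free hence constant, and the constants come from conservation of $\int B\,dx$ and of total energy.

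There are two local differences. First, you integrate $\nabla\times B=\rho u$ directly over $\mathbb{T}^3$, whereas the paper first dots it with $u$. Second, you settle the Poisson--Boltzmann step with the monotonicity identity $\int|\nabla\phi|^2=-T\int(\rho-\rho_{\mathrm{ion}})(\log\rho-\log\rho_{\mathrm{ion}})\le 0$, where the paper applies the maximum principle to $T\Delta\log\rho=\rho-\rho_{\mathrm{ion}}$. Both variants are equally valid here.
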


In the electrostatic case ($B = 0$), the VML system reduces to the VPL system. The argument above requires a genuine modification: the bulk velocity is no longer forced to vanish.

\begin{theorem}[Global steady state of the VPL system]
\label{thm:equilibrium_vpl}
Consider the Vlasov--Poisson--Landau system \eqref{eq:vpl_1}-\eqref{eq:vpl_2} with 
$\nu > 0$, $x \in \mathbb{T}^3$, $v \in \R^3$, and a uniform neutralizing background $\rho_{\mathrm{ion}} > 0$.  Under the same regularity and decay assumptions as in Theorem~\ref{thm:equilibrium}, every steady-state solution is a spatially uniform, drifting global Maxwellian:
\begin{equation}
\label{eq:equilibrium_vpl}
f_\infty(v) = \frac{\rho_{\mathrm{ion}}}{(2\pi T_\infty)^{3/2}} \exp\!\left(-\frac{|v - u_\infty|^2}{2T_\infty}\right), \qquad E_\infty = 0,
\end{equation}
where the bulk drift and temperature are determined by the initial conditions:
\begin{equation}
\label{eq:u_T_vpl_statement}
u_\infty = \frac{\iint v \, f_{\mathrm{init}} \, dv \, dx}{\rho_{\mathrm{ion}} \, |\mathbb{T}^3|}, \qquad T_\infty = \frac{2}{3\, \rho_{\mathrm{ion}} \, |\mathbb{T}^3|}\!\left(\mathcal{E}_0 - \frac{1}{2}\rho_{\mathrm{ion}} \, |u_\infty|^2 \, |\mathbb{T}^3|\right) > 0,
\end{equation}
with $\mathcal{E}_0 = \iint \frac{1}{2}|v|^2 f_{\mathrm{init}} \, dv \, dx + \frac{1}{2}\int |E_{\mathrm{init}}|^2 \, dx$ denoting the initial total energy.
\end{theorem}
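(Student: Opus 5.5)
We follow the route indicated in the discussion before the statement: first use the H-theorem to show that $f$ is a local Maxwellian, then use the Vlasov transport term together with the Poisson equation to make that Maxwellian uniform in space with vanishing field. The drift $u_\infty$ is not forced to vanish here, and it is fixed by momentum conservation \eqref{eq:moment-consv}.

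\textbf{Step 1 (local Maxwellian).} Multiply \eqref{eq:vpl_1} by $\log f + 1$ and integrate over $\mathbb{T}^3\times\R^3$. Since $f$ is steady, the left side vanishes. The transport term integrates to zero by periodicity. The field term $E\cdot\nabla_v f$ is a $v$-divergence because $E$ does not depend on $v$, so it integrates to zero thanks to the decay assumptions. This leaves
\[
\nu \int_{\mathbb{T}^3}\int Q[f,f]\log f\,dv\,dx = 0 .
\]
Each $x$-slice of this integral is nonpositive by \eqref{eq:original_Landau}, so every slice is zero. By the standard equality case of the Landau H-theorem, the entropy dissipation $\frac12\iint ff_*\,|\Pi(v-w)(\nabla\log f-\nabla_*\log f_*)|^2|v-w|^{\gamma+2}=0$ forces $\nabla_v\log f$ to be affine of the form $-(v-u(x))/T(x)$. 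Hence
\[
f=\rho(x)(2\pi T(x))^{-3/2}e^{-|v-u(x)|^2/(2T(x))},
\]
and in particular $Q[f,f]=0$. We expect this step to be the main obstacle. It requires recalling, or proving, that vanishing entropy production characterizes Maxwellians for the Coulomb kernel. The standard argument uses the projection $\Pi$ and positivity of $f$; we would cite \cite{villani2002review} for it.

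\textbf{Step 2 (spatial homogeneity).} With $Q=0$, the steady equation becomes $v\cdot\nabla_x f + E\cdot\nabla_v f=0$. Write $\log f = a(x) + b(x)\cdot v - c(x)|v|^2$ with $c=1/(2T)$, and divide by $f$. The resulting identity is polynomial in $v$, so we match coefficients.
\begin{itemize}
\item The cubic term gives $\nabla_x c=0$, so $T$ is constant.
\item The quadratic term gives $\nabla_x b$ antisymmetric, i.e.\ a Killing field. Such fields are affine, $b=\omega\times x+b_0$, and periodicity on $\mathbb{T}^3$ forces $\omega=0$, so $b$ is constant.
\item The linear term gives $\nabla_x a = 2cE$.
\item The constant term gives $E\cdot b=0$.
\end{itemize}
Since $E=-\nabla_x\phi$, we get $a = -\phi/T + \text{const}$, so $\rho = C e^{-\phi/T}$. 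Now test the Poisson equation $-\Delta\phi=Ce^{-\phi/T}-\rho_{\rm ion}$ against $\phi$, using $\int(\rho-\rho_{\rm ion})\,dx=0$:
\[
\int|\nabla\phi|^2\,dx = \int (Ce^{-\phi/T}-\rho_{\rm ion})(\phi-\bar\phi)\,dx .
\]
The map $\phi\mapsto Ce^{-\phi/T}$ is decreasing, and $Ce^{-\bar\phi/T}$ is a constant, so subtracting $Ce^{-\bar\phi/T}$ times $\int(\phi-\bar\phi)\,dx=0$ does not change the right side. After this subtraction the right side is $\le 0$. Hence $\nabla\phi=0$, so $E=0$ and $\rho\equiv\rho_{\rm ion}$, and $u$ is constant.

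\textbf{Step 3 (identifying constants).} For a steady state approached from the initial data, the conserved mass, momentum \eqref{eq:moment-consv} and energy \eqref{eq:energy-consv} take their initial values. Momentum gives $\rho_{\rm ion}|\mathbb{T}^3|\,u_\infty=\iint vf_{\rm init}$, which is the stated formula for $u_\infty$. Energy gives $\mathcal E_0=|\mathbb{T}^3|\rho_{\rm ion}\bigl(\tfrac32T_\infty+\tfrac12|u_\infty|^2\bigr)$, since $E_\infty=0$ contributes no field energy. Solving for $T_\infty$ gives the stated formula.

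Positivity of $T_\infty$ follows from Cauchy--Schwarz, $|\iint vf|^2\le\iint f\cdot\iint|v|^2f$, which gives $\mathcal E_0\ge\tfrac12\rho_{\rm ion}|u_\infty|^2|\mathbb{T}^3|$. Equality would require $f_{\rm init}$ to be concentrated on a single velocity, which is excluded for a positive density with finite entropy.
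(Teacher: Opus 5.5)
Your proof is correct and has the same architecture as the paper's. Vanishing entropy production gives a local Maxwellian. Polynomial matching in $v$ gives constant $T$ and $u$ together with the force balance $\nabla_x\log\rho = E/T$. Uniqueness for the resulting Poisson--Boltzmann equation gives $\rho\equiv\rho_{\mathrm{ion}}$ and $E=0$. Finally, momentum and energy conservation fix $u_\infty$ and $T_\infty$.

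The differences are in the sub-lemmas.

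\begin{itemize}
\item \textbf{Killing equation.} You use the classification of Killing fields as infinitesimal rigid motions $\omega\times x + b_0$ and kill $\omega$ by periodicity. The paper instead integrates $\sum_{i,j}|\partial_{x_i}b_j|^2$ by parts over $\mathbb{T}^3$. The two arguments are interchangeable.

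\item \textbf{Uniqueness step.} The paper takes the divergence of the force balance to get $T_\infty\Delta_x\log\rho = \rho-\rho_{\mathrm{ion}}$ and applies the maximum principle at the extrema of $\rho$. You write $\rho = Ce^{-\phi/T}$ and test $-\Delta\phi = Ce^{-\phi/T}-\rho_{\mathrm{ion}}$ against $\phi-\bar\phi$, using that $s\mapsto e^{-s/T}$ is decreasing. Since $\log\rho = -\phi/T + \mathrm{const}$, this is the same equation. Your monotonicity/energy argument needs only one integration by parts, so it would also work for weak $H^1$ solutions. The maximum principle is pointwise and needs $\rho\in C^2$.

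\item \textbf{Positivity of $T_\infty$.} Your Cauchy--Schwarz argument, combined with neutrality $\iint f_{\mathrm{init}} = \rho_{\mathrm{ion}}|\mathbb{T}^3|$, gives $T_\infty>0$ directly from the initial data. The paper's VPL proof leaves this implicit; there it follows from $T_\infty$ being the temperature of an integrable Maxwellian.

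\item \textbf{Link to the initial data.} You state explicitly the assumption that the steady state is reached from $f_{\mathrm{init}}$, so that mass, momentum and energy take their initial values. The paper uses this tacitly.
\end{itemize}
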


\subsection{Regularized Landau Operator and Particle Method}
\label{sec:particles}

In this section, we introduce the particle method for the VML system \eqref{eq:vlasov}-\eqref{eq:maxwell_full}. For convenience of presentation, we assume a one-dimensional spatial domain $\Omega_x=[0,L]$ with periodic boundary conditions, while the velocity domain remains $\R^{d_v}$, where $d_v=2$ or $3$. Our discussion can be straightforwardly extended to multiple spatial dimensions.

The Landau collision operator \eqref{eq:landau} can be rewritten in terms of the velocity score $\nabla_v \log f$ \citep{villani1998spatially, carrillo2020particle}:
\begin{align}
\label{eq:landau_score1}
Q[f, f](t, x, v) &= \nabla_v \cdot \bigg\{ f(t,x,v) \int_{\R^{d_v}} A(v - w)\, \big[ \nabla_v \log f(t,x,v) - \nabla_w \log f(t,x,w) \big]f(t,x,w)  \, dw \bigg\}\\
&:=\nabla_v \cdot \bigg\{ f(t,x,v) U(t,x,v)\bigg\}.
\end{align}
This formulation forms the basis of the particle method: the collision operator can be viewed as a transport operator with ``collision force" $U(t,x,v)$, which depends only on the score function rather than the distribution function itself. 

However, the spatial dependence in $U(t,x,v)$ does not admit a direct particle approximation. Following the idea in \citep{bailo2024collisional}, we introduce a regularized ``collision force":
\begin{equation}
\label{eq:landau_score2}
U^{\eta}(t,x,v) :=\iint_{\Omega_x\times\R^{d_v}} \psi_{\eta}(x-y)A(v - w)\,  \big[ \nabla_v \log f(t,x,v) - \nabla_w \log f(t,y,w) \big] f(t,y,w)\, dw \,dy,
\end{equation}
where $\psi_\eta$ is a symmetric, positive kernel function used for spatial localization. Typical choices include B-spline kernels. This localization allows only particles within a certain neighborhood to undergo collisions. It is an easy exercise to show that the regularized Landau collision operator $Q^{\eta}[f,f]:=\nabla_v\cdot \{ f U^\eta\}$ satisfies conservation of mass, momentum, and energy:
\begin{align}
\iint Q^\eta[f,f]\, dv\,dx=\iint Q^\eta[f,f] v_i\, dv\,dx=\iint Q^\eta[f,f]|v|^2\, dv\,dx=0,
\end{align}
as well as the H-theorem:
\begin{equation}
\iint Q^{\eta}[f,f]\log f\,dv \,dx\leq 0.
\end{equation}
However, note the difference from \eqref{eq:original_Landau}.

With the above regularization, \eqref{eq:vlasov} can be written in conservative form:
\begin{equation}
\label{eq:conservative}
\partial_t f + \nabla_x \cdot (v f) + \nabla_v \cdot ((E + v \times B) f) = \nu \, \nabla_v \cdot (U^\eta f).
\end{equation}
We now represent the distribution function $f$ as a sum of $n$ weighted particles:
\begin{equation}
\label{eq:weaksoln}
f(t, x, v) \approx \sum_{p=1}^{n} w_p \, \delta(x - x_p(t)) \, \delta(v - v_p(t)).
\end{equation}
\eqref{eq:weaksoln} is a weak solution to \eqref{eq:conservative} provided that 
the particle positions $x_p$ and velocities $v_p$ evolve according to the coupled ODE system (where $v_{1,p}$ denotes the first component of $v_p$):
\begin{align}
\label{eq:particle_ode}
\frac{dx_p}{dt} &= v_{1,p} \pmod{L}, \\
\label{eq:collision_particle}
\frac{dv_p}{dt} &= \big(E(x_p) + v_p \times B(x_p)\big) - \nu U^{\eta}(x_p,v_p).
\end{align}
Here, the collision force $U^{\eta}(x_p,v_p)$ is given by
\begin{equation}
\label{eq:Landau_particle}
U^{\eta}(x_p,v_p):=\sum_{q=1}^{n} w_q \, \psi_\eta(x_p - x_q) \, A(v_p - v_q) \big[ s(x_p, v_p) - s(x_q, v_q) \big],
\end{equation}
where $s(x,v) = \nabla_v \log f(x,v)$ is the velocity \emph{score function}. The score function is the key quantity in the algorithm, and its approximation will be discussed in Section~\ref{sec:score}. For the moment, we assume that, given particles $\{x_p,v_p\}_{p=1}^n$, a good estimator $\{s(x_p,v_p)\}_{p=1}^n$ is available.

To couple the ODE system \eqref{eq:particle_ode}-\eqref{eq:collision_particle} with the Maxwell's equations \eqref{eq:maxwell_full}, we adopt the standard particle-in-cell (PIC) procedure. We partition the spatial domain $[0,L]$ into $M$ cells of size $\eta=L/M$, with cell centers $x_j = (j - \tfrac{1}{2})\eta$, $j = 1, \ldots, M$. In the collision operator, we choose $\psi_\eta$ as the degree-1 B-spline (hat function):
\begin{equation}
\label{eq:hat_kernel}
\psi_\eta(x) = \frac{1}{\eta}\!\left(1 - \frac{|x|}{\eta}\right)_{\!\!+}\!, \quad (z)_+ = \max(z, 0).
\end{equation}
Since $\psi_\eta$ has compact support $[-\eta, \eta]$, only particles within distance $\eta$ of each other undergo collisions, thereby localizing interactions to particles in the same or adjacent spatial cells.  

The same $\psi_{\eta}$ is used to deposit charge and current densities:
\begin{equation}
\label{eq:deposit}
\rho_j = \sum_{p=1}^{n} w_p\psi_\eta(x_j - x_p), \qquad J_{i,j} =  \sum_{p=1}^{n}w_p v_{i,p}\, \psi_\eta(x_j - x_p), \quad i = 1, \ldots, d_v.
\end{equation}
We then solve the Maxwell's equations using the grid values $\rho_j$ and $J_{i,j}$. The updated electric and magnetic fields $E_j$ and $B_j$ are then interpolated back to the particle positions via
\begin{equation}
\label{eq:interpolate}
E(x_p) = \eta \sum_{j=1}^{M} \psi_\eta(x_p - x_j)\, E_j, \qquad B(x_p) = \eta \sum_{j=1}^{M} \psi_\eta(x_p - x_j)\, B_j.
\end{equation}


The proposed discretization of the collision force \eqref{eq:Landau_particle} inherits the main physical properties of the regularized Landau operator, such as conservation and entropy decay. This is most naturally seen through the \emph{weak formulation}: for any test function $\phi(x,v)\in \R^{d_v}$,
\begin{align}
&\nu\sum_p w_p  \phi (x_p,v_p)\cdot U^{\eta}(x_p,v_p)=\nu\sum_{p,q} w_p w_q \psi_\eta(x_p - x_q)  \phi(x_p,v_p)^\top A(v_p-v_q)\,[s(x_p,v_p) - s(x_q,v_q)]\\
=&\frac{\nu}{2}\sum_{p,q} w_p w_q \, \psi_\eta(x_p - x_q) \big[\phi(x_p,v_p) - \phi(x_q,v_q)\big]^\top A(v_p-v_q)\,[s(x_p,v_p) - s(x_q,v_q)],\label{eq:D_symmetrized}
\end{align}
where the last line is obtained by swapping $p \leftrightarrow q$ in the double sum and averaging.

We can state the following result.
\begin{theorem}
\label{thm:collision}
For any score approximation $s(x,v)$ and approximate force $U^\eta$ defined by \eqref{eq:Landau_particle}, the following properties hold:
\begin{enumerate}
\item[(i)] \textbf{Conservation of momentum and energy:}
\begin{equation}
\sum_p w_p \, U^{\eta}(x_p,v_p) = 0, \qquad \sum_p w_p \, v_p \cdot U^{\eta}(x_p,v_p) = 0.
\end{equation}
\item[(ii)] \textbf{Estimated entropy dissipation:} 
\begin{equation}
-\nu\sum_p w_p s (x_p,v_p)\cdot U^{\eta}(x_p,v_p)\leq 0.
\end{equation}
\end{enumerate}
\end{theorem}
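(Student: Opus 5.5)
The whole argument rests on the symmetrized weak formulation \eqref{eq:D_symmetrized}. Each claimed identity or inequality is the left-hand side of \eqref{eq:D_symmetrized} for a particular vector-valued test function $\phi$. So I will first verify that symmetrization carefully and then substitute three choices of $\phi$.

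\textbf{Symmetrization.} Write $S:=\sum_p w_p\,\phi(x_p,v_p)\cdot U^\eta(x_p,v_p)$ as a double sum over $(p,q)$ using \eqref{eq:Landau_particle}. Relabel $p\leftrightarrow q$ in this double sum. This uses three facts:
\begin{itemize}
\item $\psi_\eta$ is even, so $\psi_\eta(x_q-x_p)=\psi_\eta(x_p-x_q)$;
\item $A(-z)=A(z)$, since $|z|^{\gamma+2}$ and $zz^\top/|z|^2$ are both even in $z$;
\item the bracket $s(x_p,v_p)-s(x_q,v_q)$ changes sign under the relabeling.
\end{itemize}
The relabeled sum therefore equals $-\sum_{p,q}w_pw_q\psi_\eta(x_p-x_q)\,\phi(x_q,v_q)^\top A(v_p-v_q)[s_p-s_q]$. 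Averaging it with the original gives \eqref{eq:D_symmetrized} (without the factor $\nu$):
\[
S=\tfrac12\sum_{p,q}w_pw_q\psi_\eta(x_p-x_q)[\phi_p-\phi_q]^\top A(v_p-v_q)[s_p-s_q].
\]
Diagonal terms $p=q$ vanish because the bracket is zero. Whatever convention is used for $A(0)$ is therefore irrelevant, though the same convention must be used in $U^\eta$.

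\textbf{Momentum, part (i).} Take $\phi\equiv e_i$, the constant unit vector. Then $\phi_p-\phi_q=0$ for every pair, so $\sum_p w_p U^\eta_i(x_p,v_p)=0$ for each component $i$.

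\textbf{Energy, part (i).} Take $\phi(x,v)=v$, so that $\phi_p-\phi_q=v_p-v_q$. The key fact is that $\Pi(z)z=z-z(z^\top z)/|z|^2=0$, hence $A(z)z=0$. Since $A$ is symmetric, $(v_p-v_q)^\top A(v_p-v_q)=\big(A(v_p-v_q)(v_p-v_q)\big)^\top=0$. Every summand vanishes, so $\sum_p w_p\,v_p\cdot U^\eta(x_p,v_p)=0$.

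\textbf{Entropy dissipation, part (ii).} Take $\phi=s$. The symmetrized sum becomes
\[
\tfrac{\nu}{2}\sum_{p,q}w_pw_q\psi_\eta(x_p-x_q)\,(s_p-s_q)^\top A(v_p-v_q)(s_p-s_q).
\]
Each summand is nonnegative:
\begin{itemize}
\item $w_p,w_q\ge 0$, assuming nonnegative particle weights; this is the one hypothesis I would make explicit;
\item $\psi_\eta\ge0$, since the hat kernel is nonnegative;
\item $A(z)=|z|^{\gamma+2}\Pi(z)$ is positive semidefinite, because $\Pi(z)$ is an orthogonal projection and $|z|^{\gamma+2}\ge0$.
\end{itemize}
Hence $-\nu\sum_p w_p s_p\cdot U^\eta_p\le0$.

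The main step is simply checking the symmetries of $A$ and $\psi_\eta$ that justify the swap. None of the arguments uses any property of $s$, which is why the result holds for an arbitrary score approximation.
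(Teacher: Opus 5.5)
Your proof is correct and follows the paper's argument exactly: you apply the symmetrized weak form \eqref{eq:D_symmetrized} with $\phi=e_i$ for momentum, use $z^\top A(z)=0$ for energy, and take $\phi=s$ with positive semidefiniteness of $A$ for entropy dissipation. Your explicit checks (evenness of $\psi_\eta$ and $A$, nonnegativity of $w_p$ and $\psi_\eta$, implicitly $\nu\ge 0$, and vanishing diagonal terms) are details the paper leaves implicit; one further caveat is that if two distinct particles share a velocity, the convention for $A(0)$ must itself be positive semidefinite (e.g.\ $A(0)=0$) for (ii) to hold.
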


\begin{proof}
\textbf{(i)} Using \eqref{eq:D_symmetrized} with $\phi(x,v) = e_i$, conservation of momentum is immediate. Conservation of energy follows from $(v_p - v_q)^\top A(v_p - v_q) = 0$, since for any $z \in \R^{d_v}$,
\begin{equation}
\label{eq:zA_zero}
z^\top A(z) = |z|^{\gamma+2}\, z^\top \Pi(z) = |z|^{\gamma+2}\, z^\top\!\left(I_{d_v} - \frac{zz^\top}{|z|^2}\right) = 0^\top.
\end{equation}

\textbf{(ii)} Taking $\phi(x,v)=s(x,v)$ in \eqref{eq:D_symmetrized}, we obtain
\[
\nu\sum_p w_p  \phi (x_p,v_p)\cdot U^{\eta}(x_p,v_p)\geq 0,
\]
since $A(z)$ is positive semidefinite. 
\end{proof}

\begin{remark}
\label{remark-cons}
The above theorem guarantees that the contribution from the collision term to the particle method is physically consistent. In particular, provided that Maxwell's equations are solved appropriately (e.g., using Yee's finite difference scheme \cite{Yee66}), the semi-discrete particle method \eqref{eq:particle_ode}-\eqref{eq:collision_particle} preserves the total energy of the system. In addition, an energy-conserving second-order explicit time discretization scheme has recently been developed \cite{yoo2025explicit}, which can be readily combined with the present particle method to achieve fully discrete energy conservation. However, since this approach requires two or three evaluations of the collision operator per time step, we opt for the simpler forward Euler for efficiency.
\end{remark}

\subsection{Score Estimation}
\label{sec:score}

To estimate the score function $s(x,v)=\nabla_v \log f(x,v)$ using particles, we employ score-based transport modeling (SBTM) \citep{boffi2023probability, ilin2025transport} in a spatially inhomogeneous setting. 

SBTM trains a neural network $s_\theta \colon \R^{1+d_v} \to \R^{d_v}$ to approximate the score via implicit score matching (ISM) \citep{hyvarinen2005estimation}. The ideal objective is to minimize the expected squared error $\E_f[\|s_\theta - \nabla_v \log f\|^2]$. Expanding the square yields $\E_f[\|s_\theta\|^2] - 2\E_f[s_\theta \cdot \nabla_v \log f] + C$, where $C$ is independent of $\theta$. For the cross term, integration by parts gives
\begin{equation}
\label{eq:ism_ibp}
\iint_{\Omega_x\times\R^{d_v}} s_\theta \cdot (\nabla_v \log f)\, f \, dv \,dx= \iint_{\Omega_x\times\R^{d_v}} s_\theta \cdot \nabla_v f \, dv \, dx= -\iint_{\Omega_x\times\R^{d_v}} (\nabla_v \cdot s_\theta)\, f \, dv\, dx,
\end{equation}
so the unknown true score drops out, leaving the equivalent ISM loss. Replacing the expectation with a particle average gives
\begin{equation}
\label{eq:ism}
\mathcal{L}(\theta) = \sum_{p=1}^{n} w_p \Big[ |s_\theta(x_p, v_p)|^2 + 2\, \nabla_v \cdot s_\theta(x_p, v_p) \Big].
\end{equation}
The divergence $\nabla_v \cdot s_\theta$ is estimated via Hutchinson's trace estimator \citep{hutchinson1989stochastic} with Rademacher random vectors:
\begin{equation}
\label{eq:hutchinson}
\nabla_v \cdot s_\theta(x, v) \approx z^\top \big(\nabla_v s_\theta(x, v)\big) z, \quad z \sim \mathrm{Rademacher}(\pm 1)^{d_v},
\end{equation}
where the Jacobian-vector product $(\nabla_v s_\theta) z$ is computed via a single forward-mode automatic differentiation pass. We use a two-layer MLP with softsign activation (hidden dimension 256 for Landau damping and two-stream instability, and 512 for Weibel instability).  At $t = 0$, the network is pretrained on the known analytic score of the initial condition.  At each subsequent time step, $K$ gradient steps on $\mathcal{L}(\theta)$ are performed using AdamW with learning rate $2 \times 10^{-4}$. Importantly, each gradient descent step is only $O(n)$ in runtime and memory, delivering significant speedups at the scale of $n=O(10^6)$ particles.

The full SBTM-PIC method is summarized in Algorithm~\ref{alg:sbtm_pic}. We assume that $E = (E_1(t,x), E_2(t,x), 0)$ and $B = (0, 0, B_3(t,x))$, in which case Maxwell's equations reduce to:
\begin{equation}
\label{eq:maxwell_1d}
\partial_t E_1 = -J_1, \qquad \partial_t E_2 = -\partial_x B_3 - J_2, \qquad \partial_t B_3 = -\partial_x E_2.
\end{equation}
Both $E$ and $B$ are discretized at the cell centers $x_j$ and updated using centered second-order finite differences.

\begin{algorithm}[t]
\caption{SBTM-PIC for the Vlasov--Maxwell--Landau system}
\label{alg:sbtm_pic}
\begin{algorithmic}[1]
\REQUIRE Particles $\{x_p, v_p\}_{p=1}^n$ with weights $w_p$, grid fields $E_j$, $B_j$, time step $\Delta t$, final time $t_{\mathrm{final}}$, collision frequency $\nu$, ISM steps $K$, spatial kernel $\psi_\eta$, collision kernel $A$
\STATE Pretrain $s_\theta$ on the analytic score of the initial condition
\STATE $t \leftarrow 0$
\WHILE{$t < t_{\mathrm{final}}$}
  \STATE $E_p \leftarrow \eta\sum_{j}\psi_\eta(x_p{-}x_j)\,E_j$, $\; B_p \leftarrow \eta\sum_{j}\psi_\eta(x_p{-}x_j)\,B_j$ \hfill \textit{(grid $\to$ particle)}
  \STATE $v_p \leftarrow v_p + \Delta t \, (E_p + v_p \times B_p)$ \hfill \textit{(Lorentz push)}
  \STATE $x_p \leftarrow x_p + \Delta t \, v_{1,p} \pmod{L}$ \hfill \textit{(position advance)}
  \STATE $\rho_j \leftarrow \sum_{p} w_p\,\psi_\eta(x_j-x_p)$, \; $J_{i,j} \leftarrow \sum_{p} w_p\,v_{i,p}\,\psi_\eta(x_j{-}x_p)$ \hfill \textit{(particle $\to$ grid)}
  \STATE $E_{1,j} \leftarrow E_{1,j} - \Delta t\, J_{1,j}$;  \; $E_{2,j} \leftarrow E_{2,j} - \Delta t\,\left((B_{3,j+1} - B_{3,j-1})/(2\Delta x) + J_{2,j}\right)$; \\ 
  $B_{3,j} \leftarrow B_{3,j} - \Delta t\,(E_{2,j+1} - E_{2,j-1})/(2\Delta x)$; \hfill \textit{(field update)}
  \FOR{$k = 1, \ldots, K$}
    \STATE Draw $z \sim \mathrm{Rademacher}(\pm 1)^{d_v}$
    \STATE $\theta \leftarrow \theta - \alpha\,\nabla_\theta \!\left[\sum_{p} w_p\big(|s_\theta(x_p,v_p)|^2 + 2\,z^\top (\nabla_v s_\theta(x_p,v_p))\,z\big)\right]$ \hfill \textit{(ISM)}
  \ENDFOR
  \STATE $v_p \leftarrow v_p - \Delta t\,\nu \!\sum_{q} w_q\,\psi_\eta(x_p{-}x_q)\,A(v_p{-}v_q)\big[s_\theta(x_p,v_p) - s_\theta(x_q,v_q)\big]$ \hfill \textit{(collision)}
  \STATE $t \leftarrow t + \Delta t$
\ENDWHILE
\end{algorithmic}
\end{algorithm}

{\bf Comparison with the blob method}. 
The blob method 
\citep{carrillo2019blob, carrillo2020particle, bailo2024collisional} is an alternative approach for score estimation based on a regularized kernel approximation. While several variants exist, the formulation in \citep{bailo2024collisional} employs a regularization   consisting of a leading kernel density estimation (KDE) term together with a correction that ensures regularized entropy dissipation; see \citep[Eq. (2.25)]{bailo2024collisional}. For a thorough comparison, we implement this method. Since the correction is roughly four orders of magnitude smaller than the KDE term in our experiments and has a negligible effect on particle trajectories, we use only the KDE-based score:
\begin{equation}
\label{eq:blob}
s_{\text{KDE}}(x, v) = \nabla_v \log \bigg( \sum_{q} w_q K_h(v - v_q) \, \psi_\eta(x - x_q) \bigg),
\end{equation}
where $K_h$ is a Gaussian kernel with bandwidth $h$. This formula is approximately twice as fast as computing the full blob score as in \citep{bailo2024collisional}, while yielding empirically identical trajectories. Selecting an appropriate bandwidth $h$ can be challenging, especially when the velocity dimension $d_v$ is high; in this work, we use Silverman's rule \citep{silverman2018density}. The blob method uses exactly the same Algorithm~\ref{alg:sbtm_pic}, except that the ISM training (lines~9 -- 12) is replaced by the KDE formula~\eqref{eq:blob}. Evaluating \eqref{eq:blob} requires $O(n^2)$ pairwise computations within each spatial cell and $O(n^2)$ memory.

\section{Numerical Experiments}
\label{sec:experiments}


\subsection{Setup}

All experiments use the 1D-$d_v$V setting ($d_v \in \{2, 3\}$).  The spatial domain $[0, L]$ is discretized into $M$ grid cells with spacing $\eta = L / M$ and  periodic boundary conditions. In each experiment, $n$ particles are initialized by sampling positions from the spatial marginal of $f_0$ and velocities from the velocity marginal; each particle carries equal weight $w_p = L/n$. The first two examples (Landau damping and two-stream instability) are electrostatic cases, where we solve the VPL system \eqref{eq:vpl_1}-\eqref{eq:vpl_2}. The last example (Weibel instability) is an electromagnetic case, where we solve the VML system \eqref{eq:vlasov}-\eqref{eq:maxwell_full}. We compare two score estimation methods -- blob (KDE) and SBTM -- within the same PIC framework.  The \emph{only} difference between the two methods is how the score $\nabla_v \log f$ is estimated. 

The code is implemented in JAX \citep{bradbury2018jax} with double precision and is available at \url{https://github.com/Vilin97/Vlasov-Landau-SBTM}.  All experiments were run on NVIDIA L40S and H200 GPUs on the University of Washington Hyak and Tillicum clusters.

\textbf{Diagnostic quantities.}  We monitor the following discrete analogues of energy conservation and entropy dissipation \eqref{eq:H-theorem}-\eqref{eq:energy-consv}. The total energy is defined as
\begin{equation}
\label{eq:diagnostics}
\mathcal{E}_K = \frac{1}{2}\sum_{p=1}^{n} w_p |v_p|^2, \quad
\mathcal{E}_E = \frac{\eta}{2}\sum_{j=1}^{M} |E_j|^2, \quad
\mathcal{E}_B = \frac{\eta}{2}\sum_{j=1}^{M} |B_j|^2, \quad
\mathcal{E} = \mathcal{E}_K + \mathcal{E}_E + \mathcal{E}_B,
\end{equation}
where $\mathcal{E}_K$ is the kinetic energy, $\mathcal{E}_E$ is the electric field energy, and $\mathcal{E}_B$ is the magnetic field energy (present only in the electromagnetic case). The total energy $\mathcal{E}$ is conserved at the semi-discrete level by Theorem~\ref{thm:collision} (i) and Remark~\ref{remark-cons}. The estimated entropy production is defined as
\begin{equation}
\label{eq:entropy_production}
\dot{\mathcal{H}} = \sum_{p=1}^{n}w_p s(x_p, v_p) \cdot U^{\eta}(x_p, v_p),
\end{equation}
which is non-negative by Theorem~\ref{thm:collision} (ii). This quantity is expected to vanish at equilibrium. 

\subsection{Landau Damping}
\label{sec:landau}

\textbf{Setup.}  A small spatial perturbation on a Maxwellian equilibrium:
\begin{equation}
f_0(x, v) = \frac{1 + \alpha \cos(kx)}{(2\pi)^{d_v/2}} \exp\!\left(-\frac{|v|^2}{2}\right),
\end{equation}
with wavenumber $k = 0.5$ and domain $L = 2\pi/k$.  We use $\alpha = 0.1$, $d_v = 3$, $M = 100$, $\Delta t = 0.02$, $\nu = 0.4$, $t_{\mathrm{final}} = 15$, and $K = 100$, sweeping $n \in \{5 \times 10^5, 10^6, 3 \times 10^6\}$. 



\textbf{Estimated entropy production and total energy.}  Figure~\ref{fig:app_landau_entropy_energy} shows the estimated entropy production and total energy evolution.  SBTM produces consistent estimated entropy production curves across all particle counts, while the blob method overestimates especially at lower $n$ (plot (a)).  Since the system relaxes to a Maxwellian equilibrium (Theorem~\ref{thm:equilibrium_vpl}), the true entropy production must vanish at long times; SBTM's decay toward zero is the physically correct behavior, while the blob method's estimated entropy production remains elevated due to persistent score estimation errors in low-density regions.  At $n = 3 \times 10^6$ the blob prediction moves closer to SBTM but remains far from the SBTM curve, indicating that blob convergence requires substantially more particles.  Although neither method conserves energy exactly at the fully discrete level -- the forward Euler time stepping and spatial discretization both introduce energy errors -- the blob method exhibits visibly larger energy drift than SBTM (plot (b)).

\begin{figure}[h]
\centering
\begin{subfigure}{0.48\linewidth}
\includegraphics[width=\linewidth]{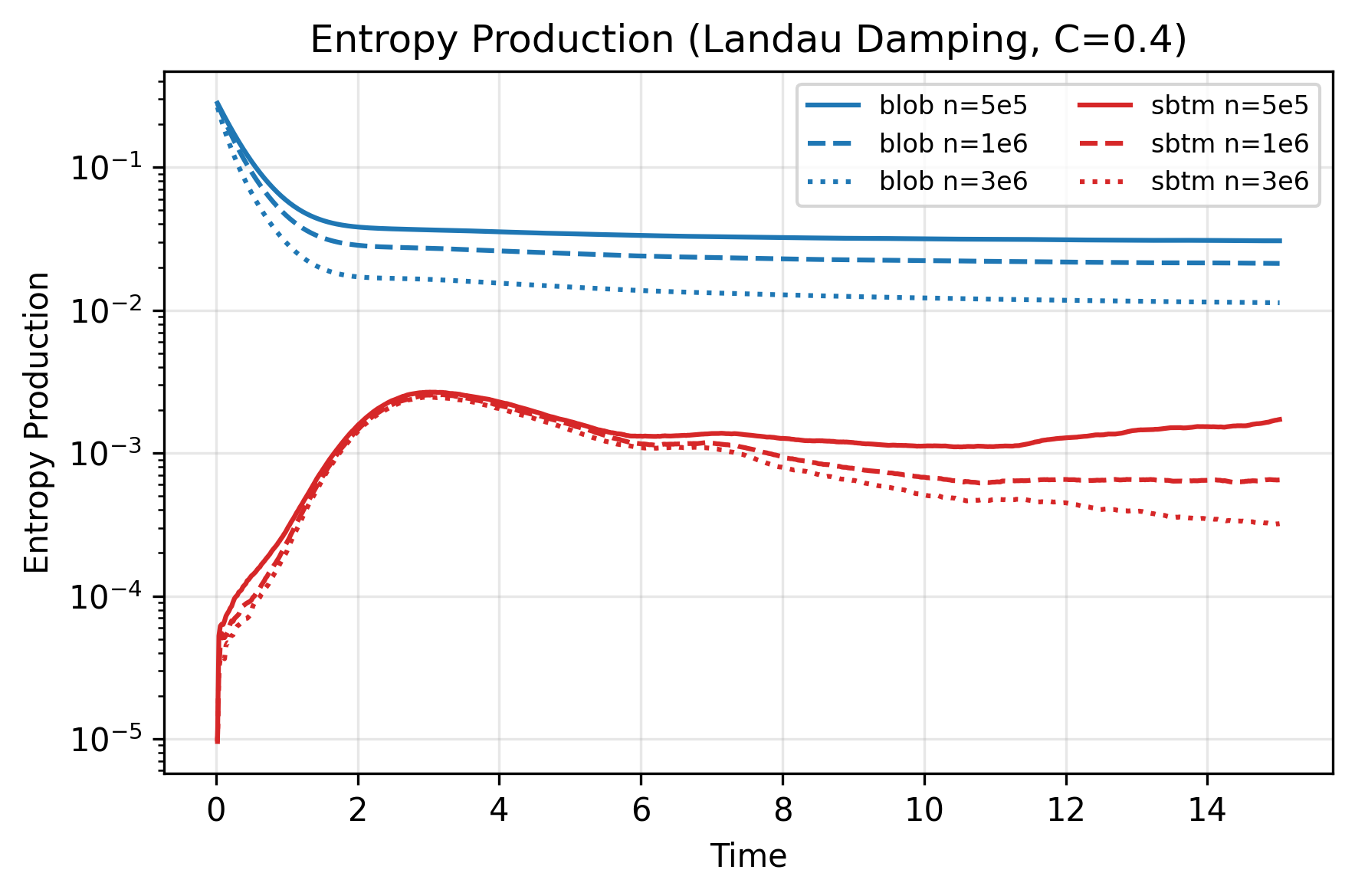}
\caption{Estimated entropy production (log scale)}
\end{subfigure}
\hfill
\begin{subfigure}{0.48\linewidth}
\includegraphics[width=\linewidth]{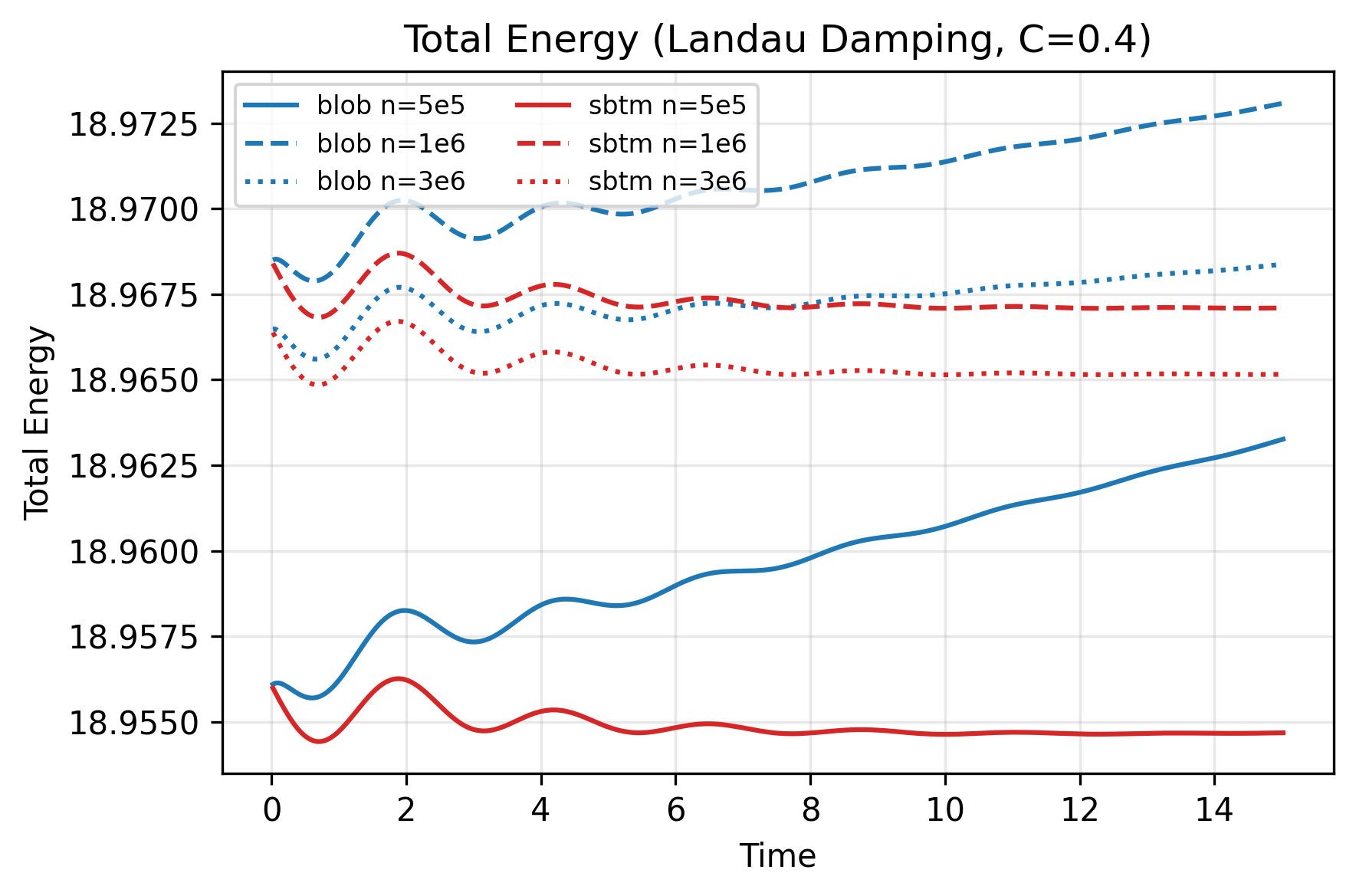}
\caption{Total energy (kinetic $+$ electromagnetic)}
\end{subfigure}
\caption{Landau damping at $\nu = 0.4$: estimated entropy production and total energy across particle counts.  SBTM (red) dissipates estimated entropy consistently and maintains near-constant total energy.  The blob method (blue) overestimates estimated entropy production and exhibits energy drift.}
\label{fig:app_landau_entropy_energy}
\end{figure}

\textbf{Electric field decay.}  Figure~\ref{fig:app_landau_efield} shows the $L^2$ norm of the electric field over time for the blob method and SBTM, along with the computed decay rate and the linear theory damping rate (plotted for reference). At this collision frequency ($\nu=0.4$), the theoretical decay rate is not known. However, we observe that the SBTM rate remains nearly constant as $n$ varies from $5 \times 10^5$ to $3 \times 10^6$, whereas the blob method's rate changes with $n$, approaching the SBTM rate at higher $n$. This demonstrates that SBTM converges faster with a relatively small number of particles, while the blob method requires significantly more particles to achieve convergence.

\begin{figure}[h]
\centering
\begin{subfigure}{0.48\linewidth}
\includegraphics[width=\linewidth]{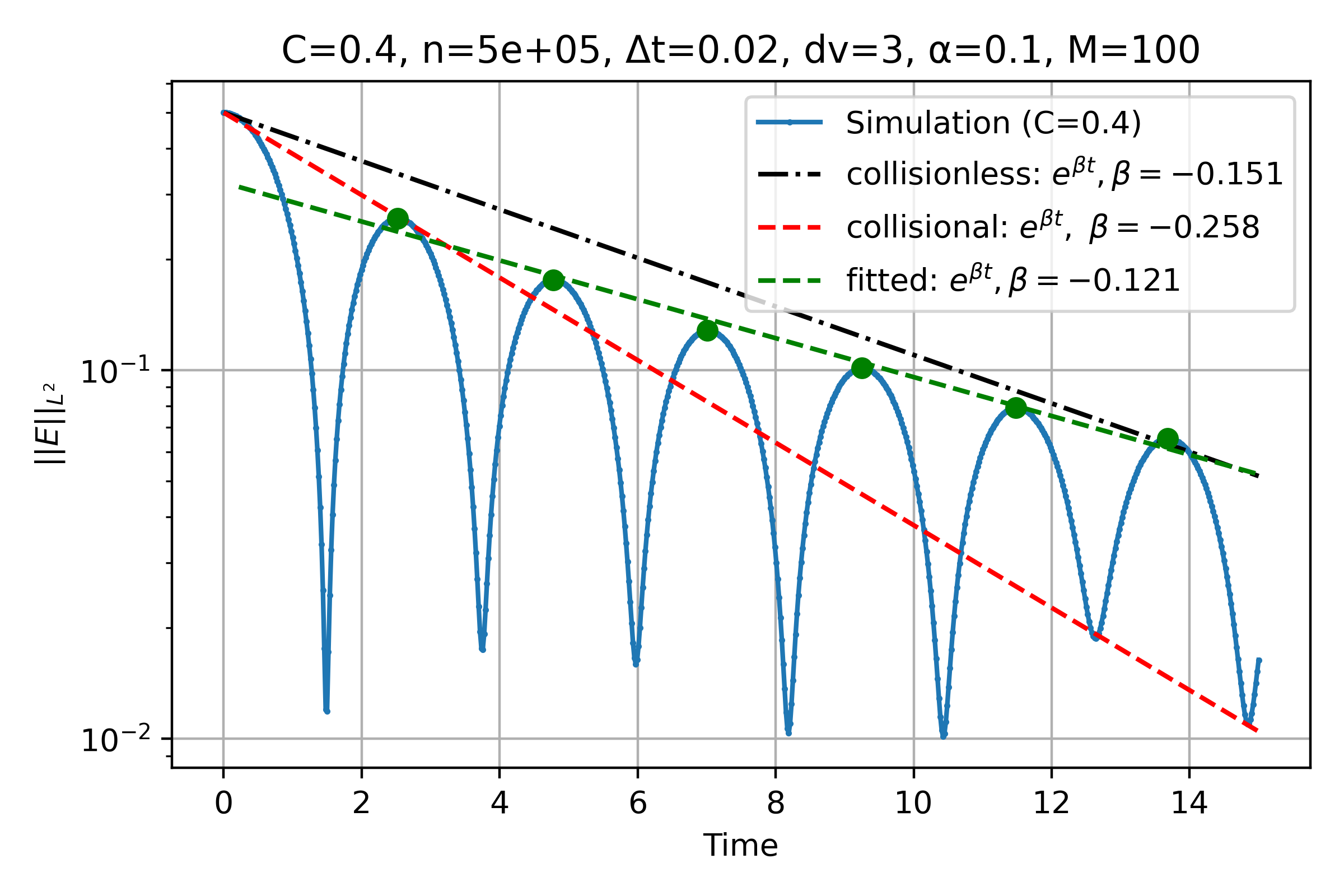}
\caption{Blob (KDE), $n = 5 \times 10^5$}
\end{subfigure}
\hfill
\begin{subfigure}{0.48\linewidth}
\includegraphics[width=\linewidth]{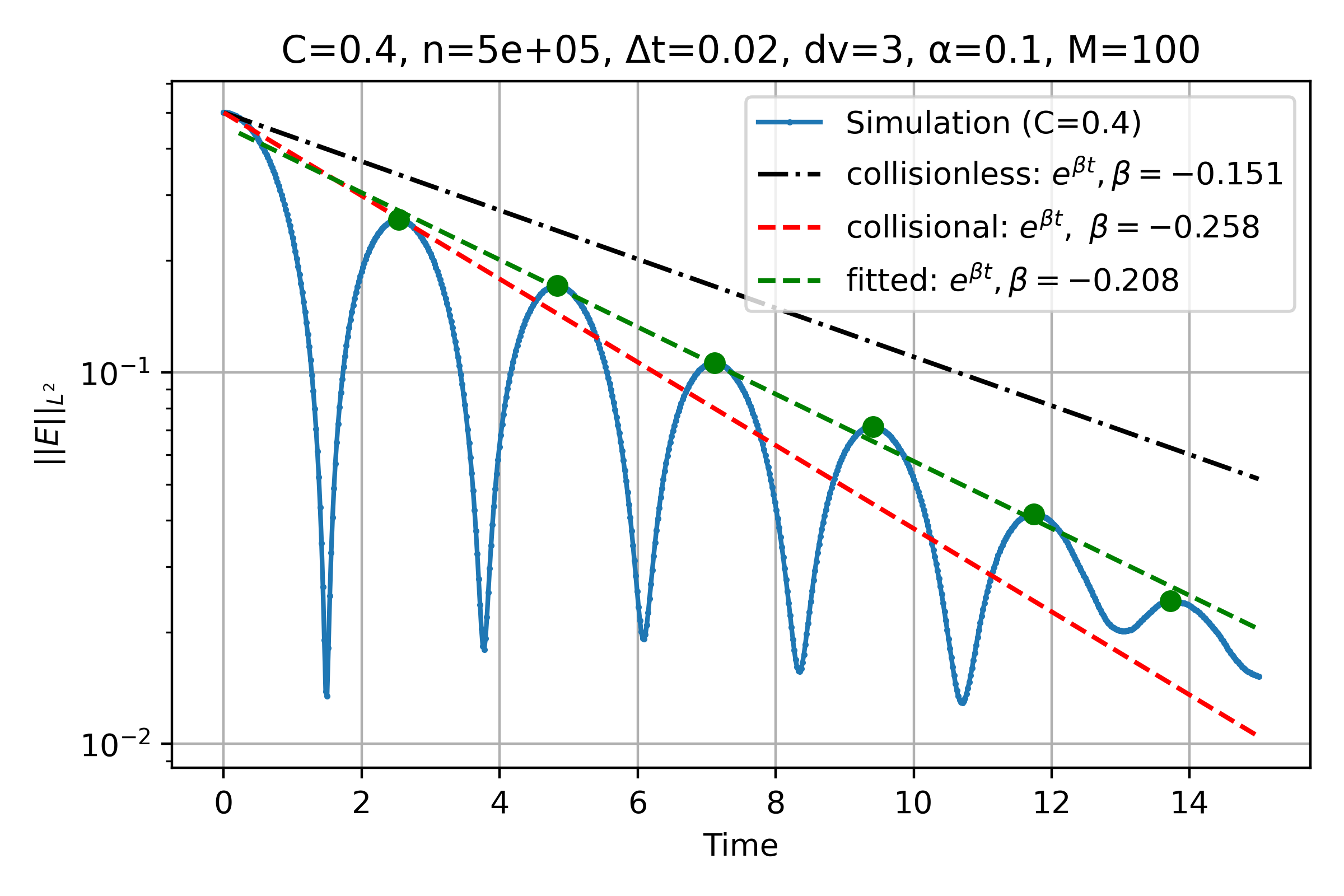}
\caption{SBTM, $n = 5 \times 10^5$}
\end{subfigure}
\begin{subfigure}{0.48\linewidth}
\includegraphics[width=\linewidth]{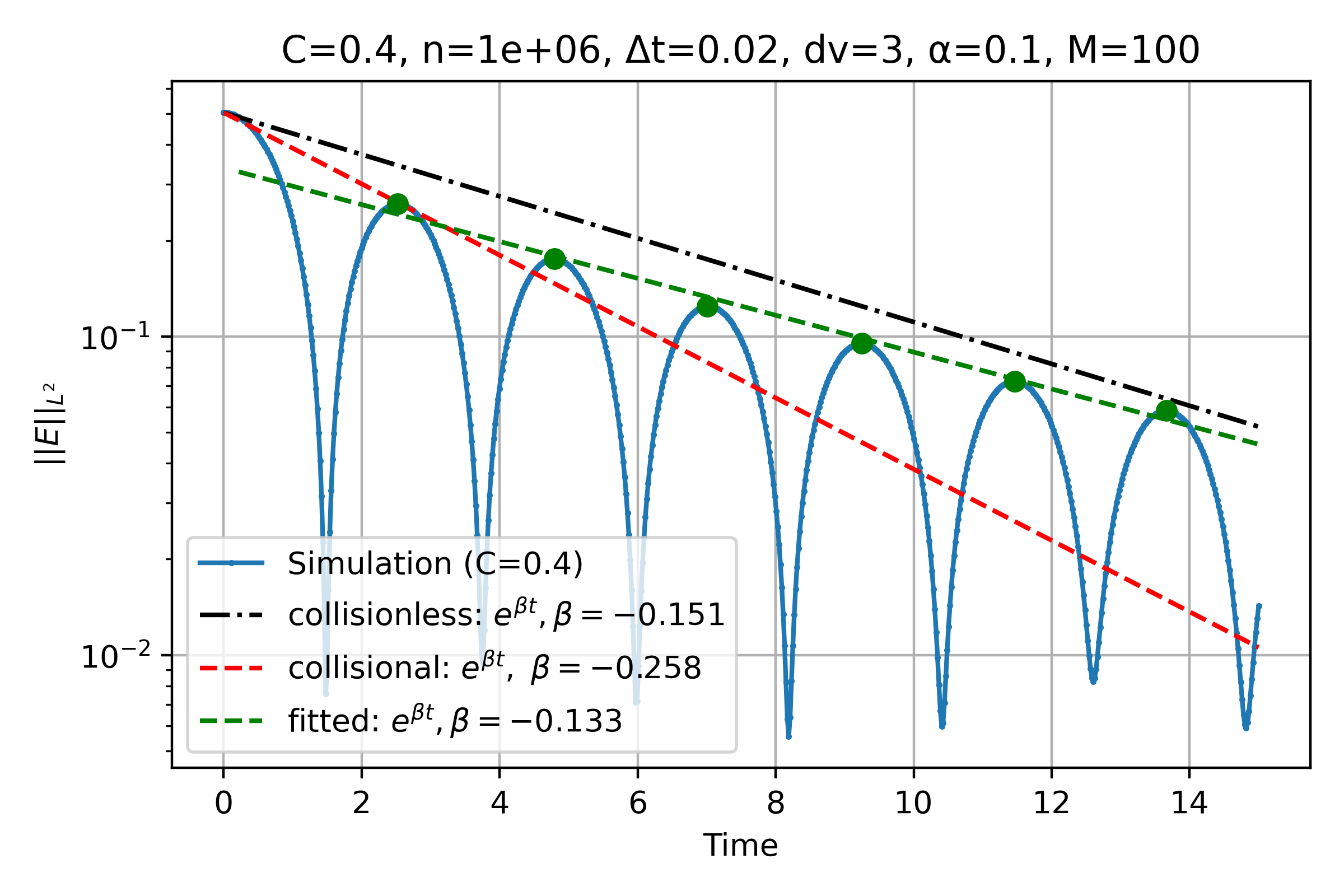}
\caption{Blob (KDE), $n = 10^6$}
\end{subfigure}
\hfill
\begin{subfigure}{0.48\linewidth}
\includegraphics[width=\linewidth]{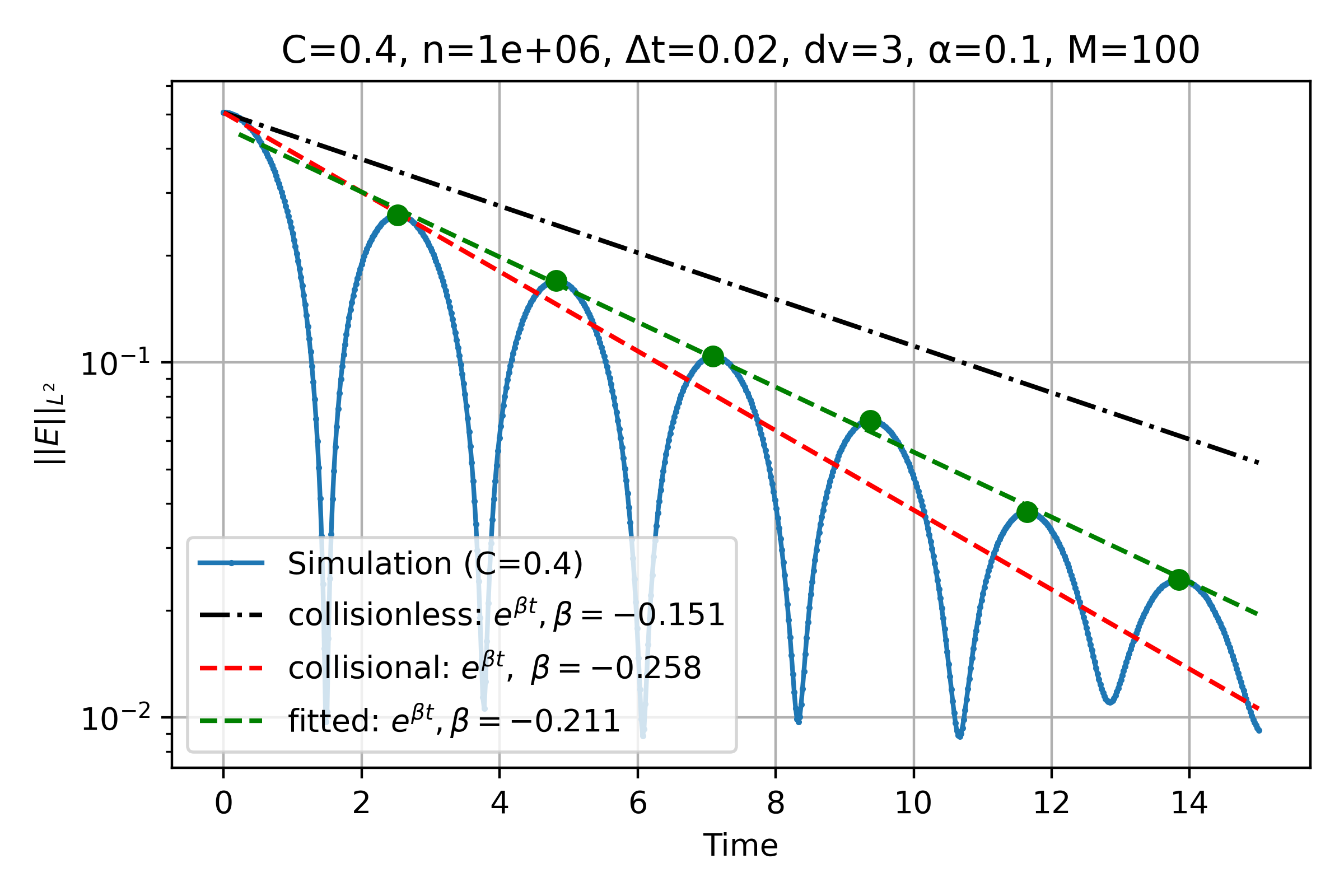}
\caption{SBTM, $n = 10^6$}
\end{subfigure}
\begin{subfigure}{0.48\linewidth}
\includegraphics[width=\linewidth]{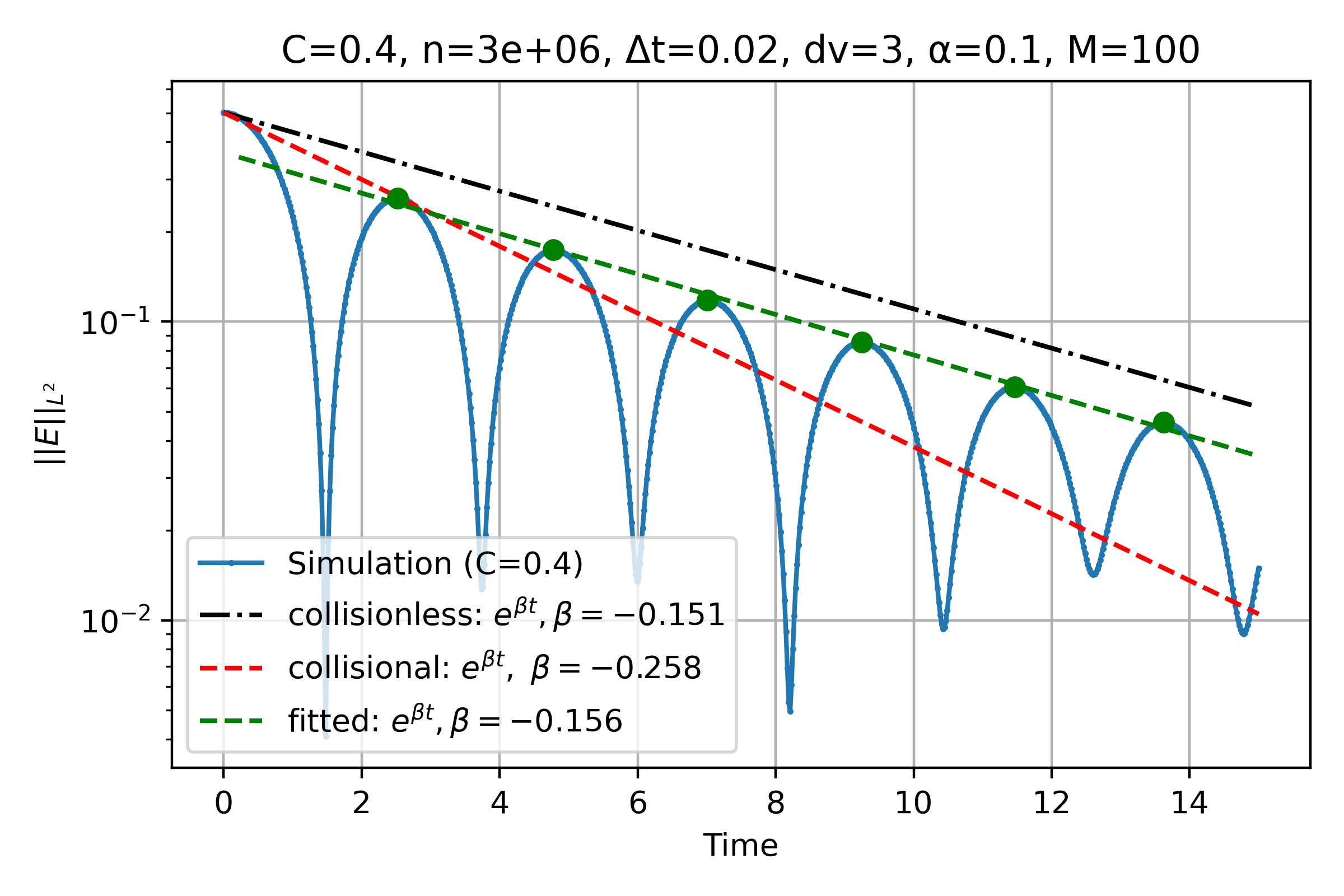}
\caption{Blob (KDE), $n = 3 \times 10^6$}
\end{subfigure}
\hfill
\begin{subfigure}{0.48\linewidth}
\includegraphics[width=\linewidth]{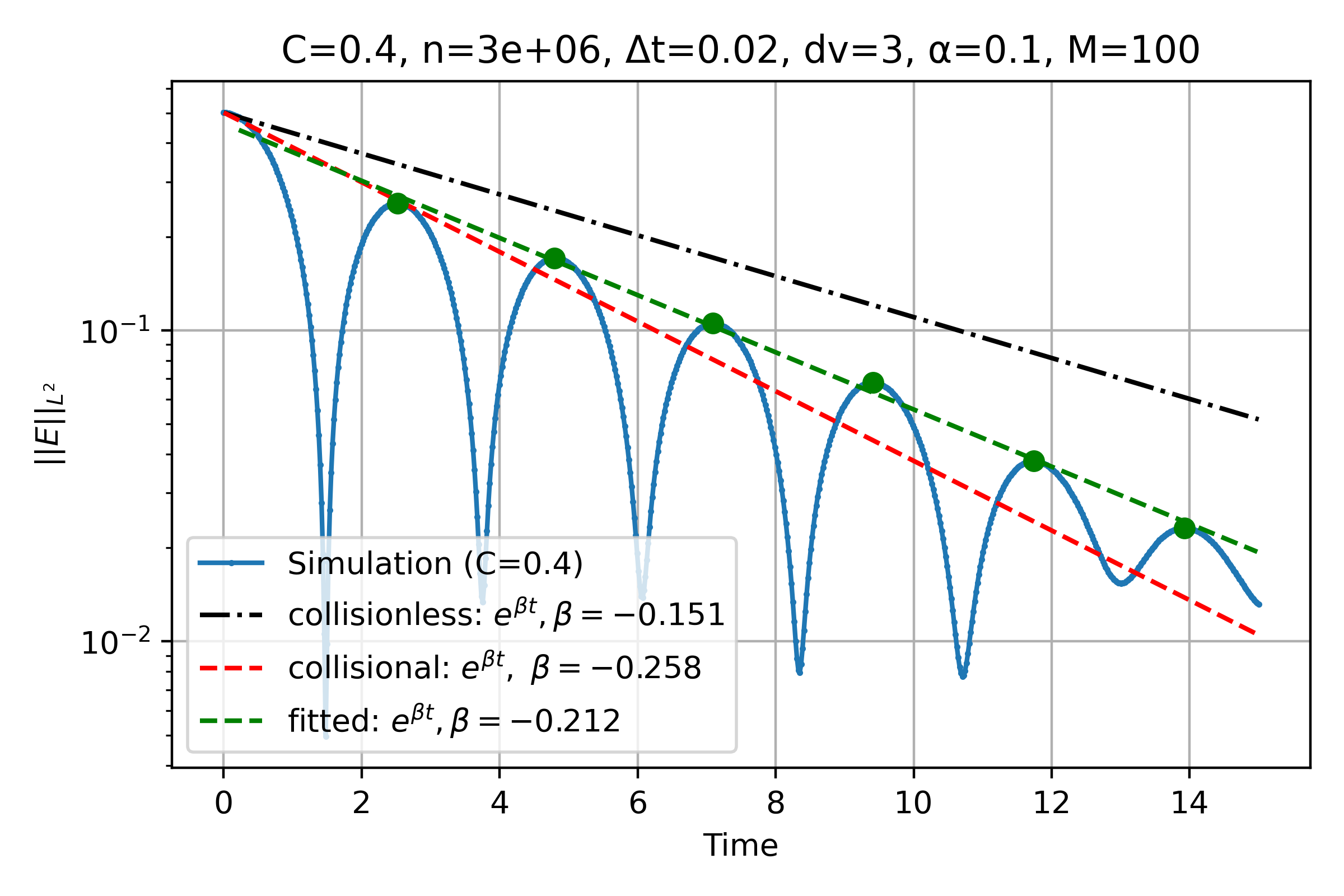}
\caption{SBTM, $n = 3 \times 10^6$}
\end{subfigure}
\caption{Landau damping: $L^2$ norm of the electric field over time at $\nu = 0.4$.  The linear theory damping rate is shown for reference (at $\nu = 0.4$ it is not expected to be accurate).  The blob method's damping rate varies with $n$ (compare (a), (c), and (e)), converging toward the SBTM rate.  The SBTM rate is consistent across all particle counts (b), (d), (f).}
\label{fig:app_landau_efield}
\end{figure}

\textbf{Velocity space distribution and equilibration.}  Figure~\ref{fig:landau_v1v2} shows $(v_1, v_2)$ phase space heatmaps and $v_2$-marginal densities at $n = 10^6$ for $\nu \in \{0.4, 1.0\}$.  SBTM produces smooth Gaussian tails for both collision frequencies, while the blob method exhibits sharp cutoffs in low-density regions. By Theorem~\ref{thm:equilibrium_vpl}, the system relaxes to a Maxwellian at temperature $T_\infty$ determined by total energy conservation.  The initial electric field energy is $\mathcal{E}_E = \frac{\alpha^2 L}{4k^2} \approx 0.13$, while the initial kinetic energy is $\mathcal{E}_K = \frac{d_v}{2} L \approx 18.85$.  Since $\mathcal{E}_E /\mathcal{E}_K= \alpha^2 / (2 d_v k^2) \approx 0.67\%$, the equilibrium temperature $T_\infty = 1 + \alpha^2/(2 d_v k^2) \approx 1.007$ is nearly indistinguishable from the initial temperature $T = 1$.  This is consistent with Figure~\ref{fig:landau_v1v2_C10} (d) and (h), where the $v_2$-marginal matches a standard Gaussian at all times.


\begin{figure}[h]
\centering
\begin{subfigure}{0.42\linewidth}
\includegraphics[width=\linewidth]{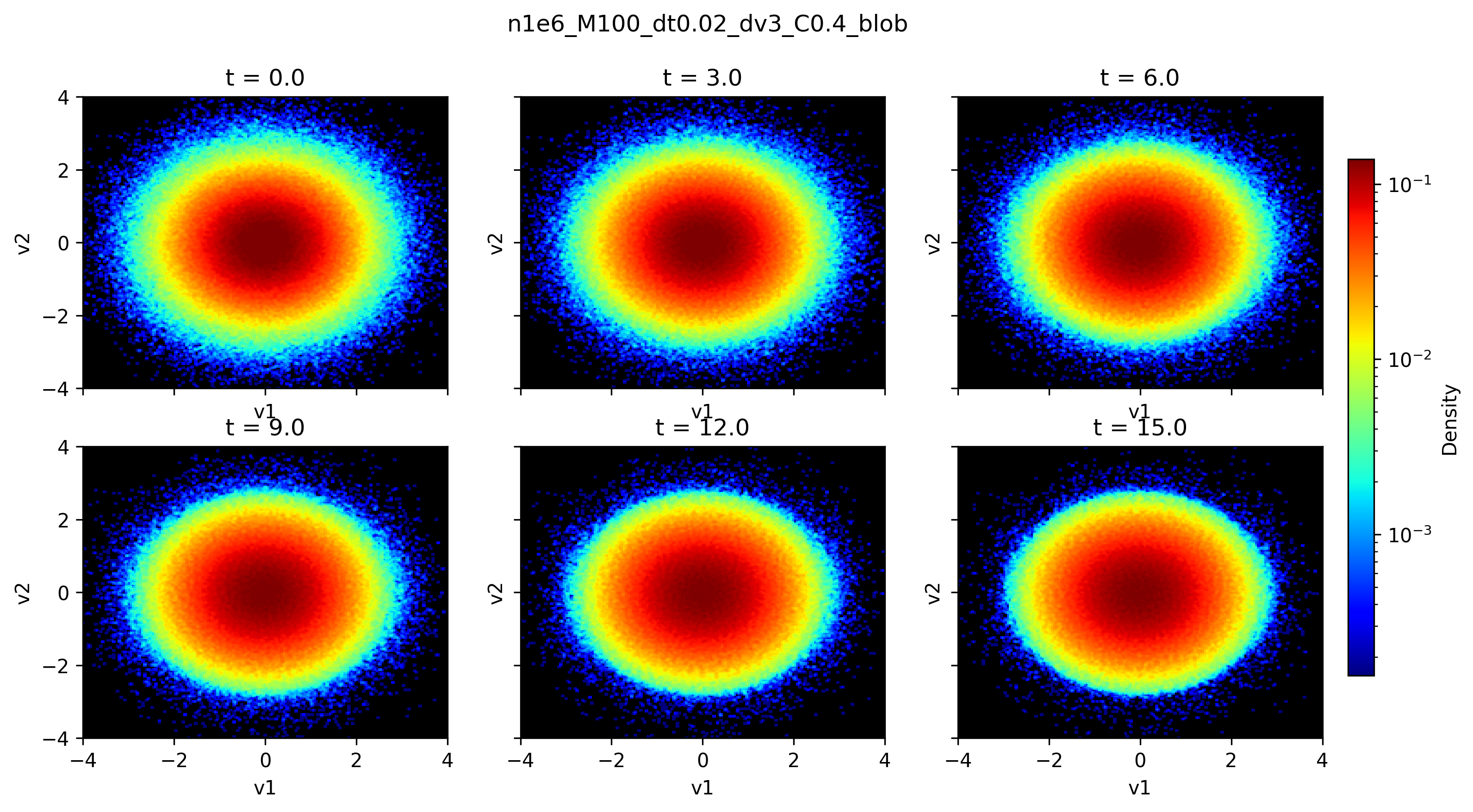}
\caption{Blob, $\nu = 0.4$}
\end{subfigure}
\hfill
\begin{subfigure}{0.42\linewidth}
\includegraphics[width=\linewidth]{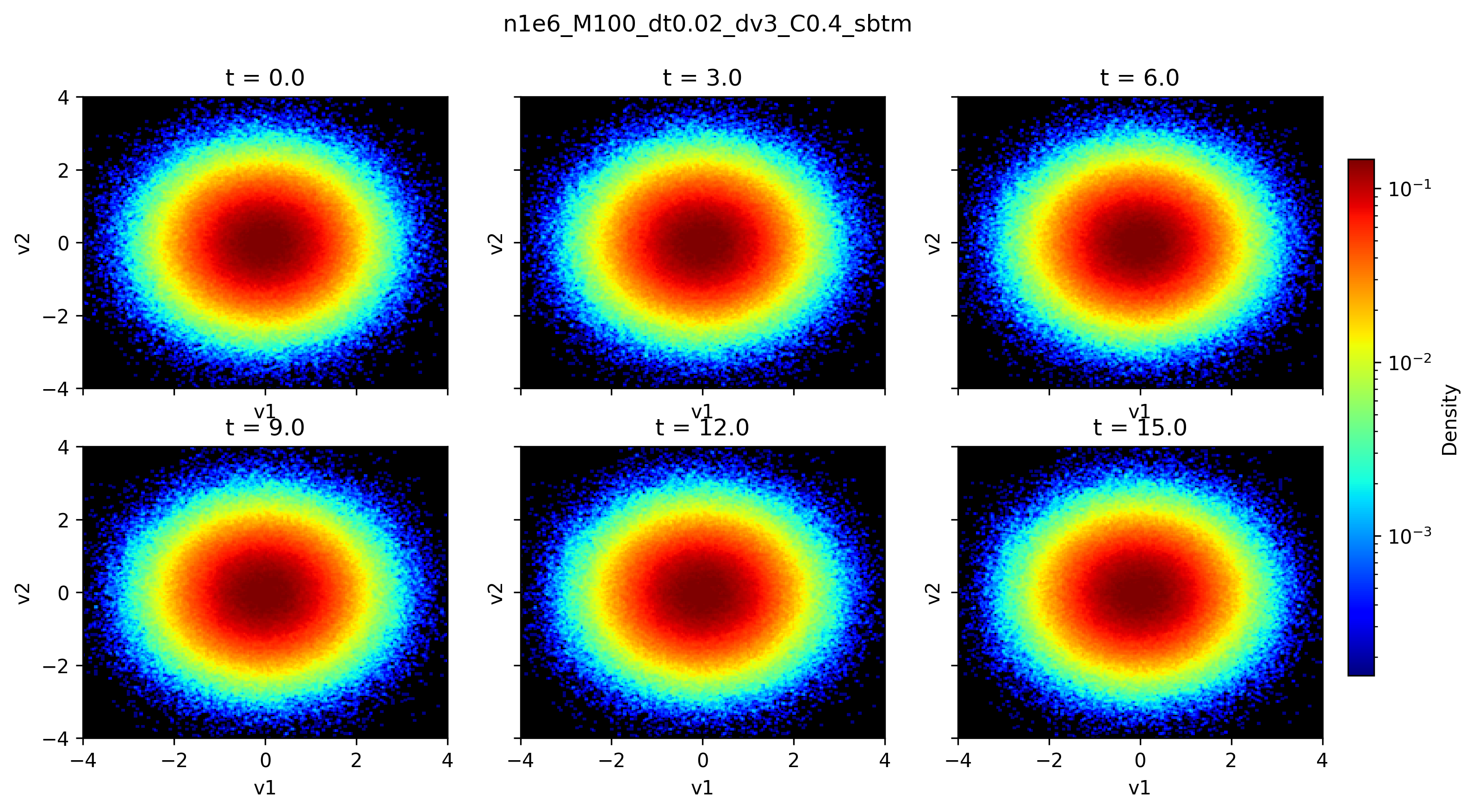}
\caption{SBTM, $\nu = 0.4$}
\end{subfigure}
\begin{subfigure}{0.42\linewidth}
\includegraphics[width=\linewidth]{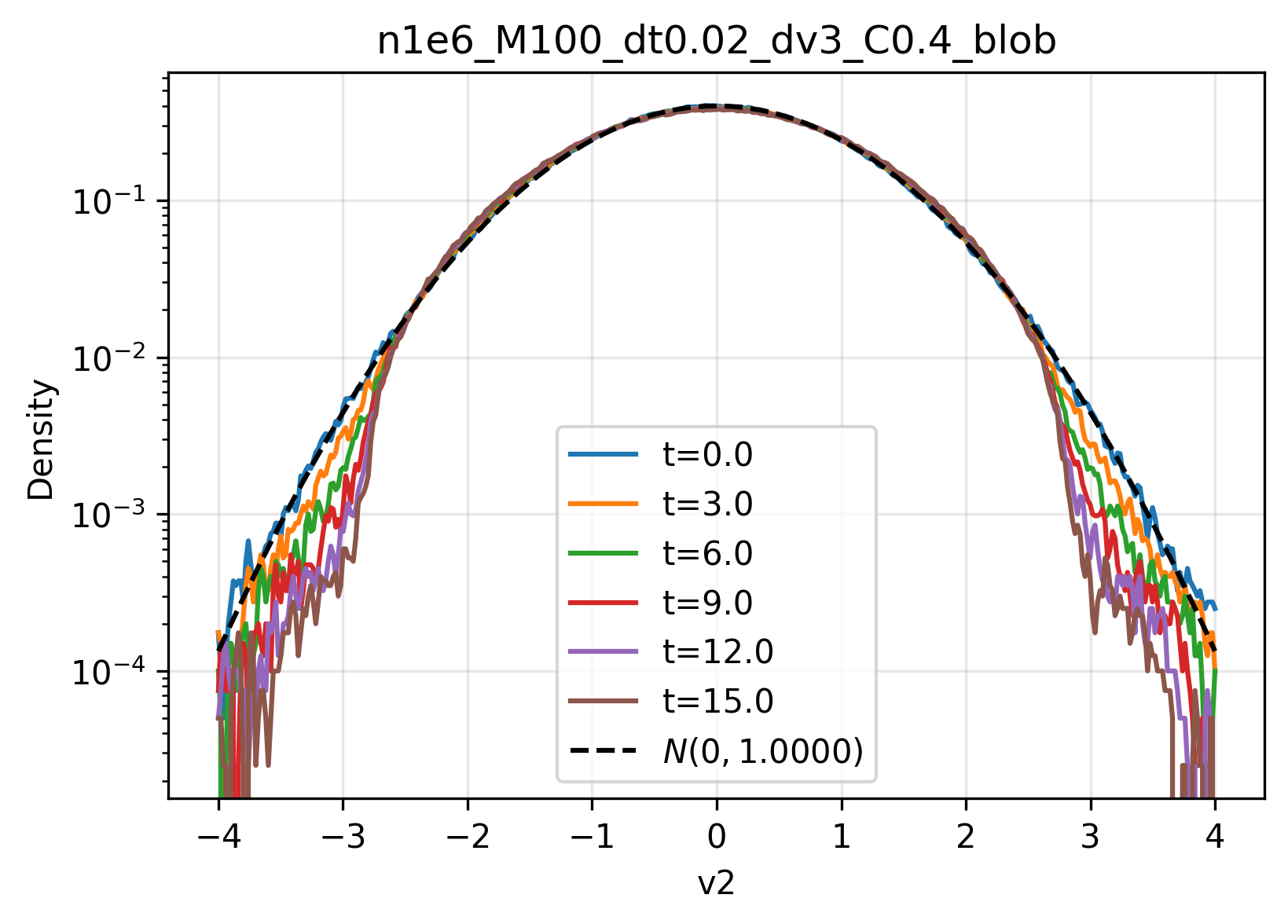}
\caption{Blob, $v_2$-marginal (log), $\nu = 0.4$}
\end{subfigure}
\hfill
\begin{subfigure}{0.42\linewidth}
\includegraphics[width=\linewidth]{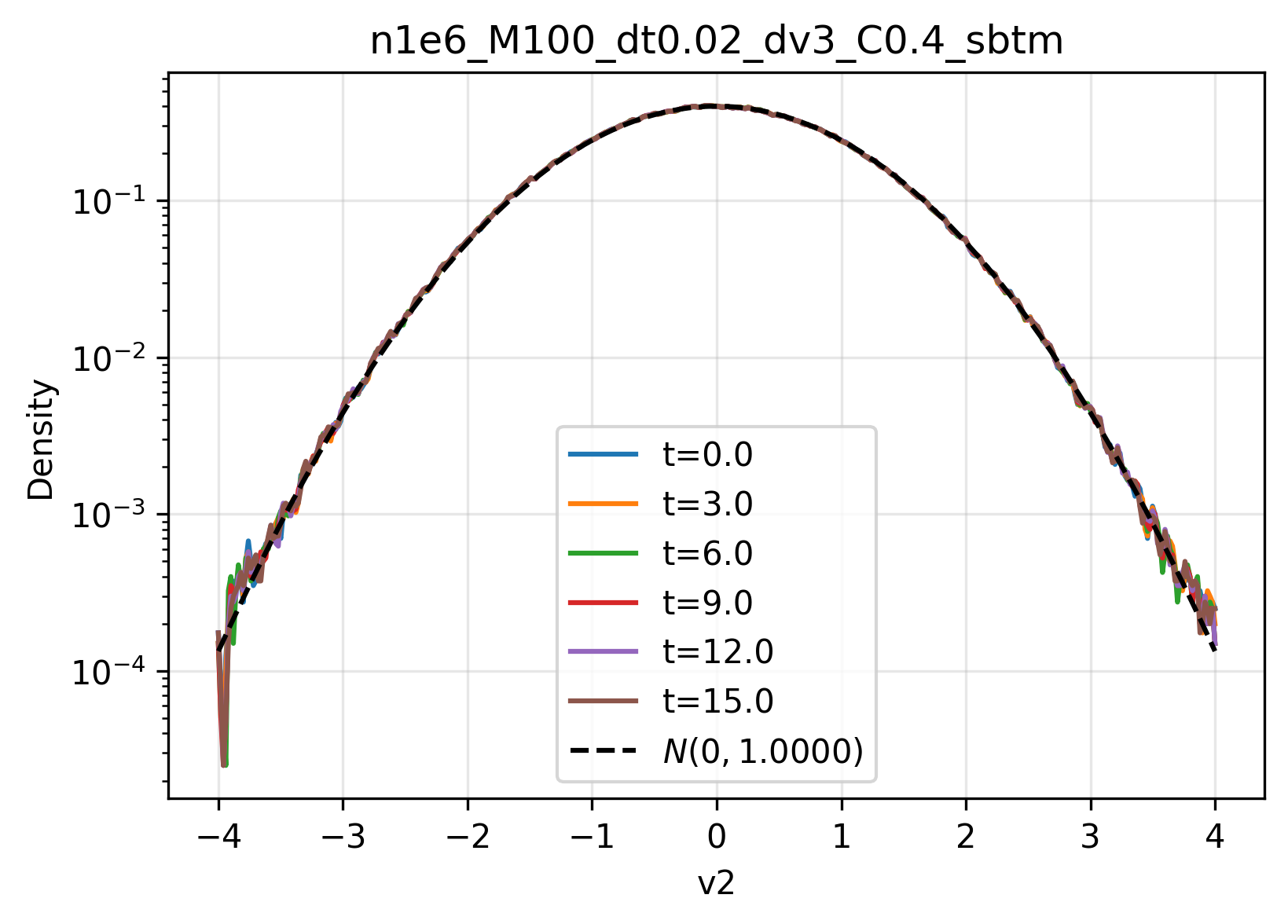}
\caption{SBTM, $v_2$-marginal (log), $\nu = 0.4$}
\end{subfigure}
\begin{subfigure}{0.42\linewidth}
\includegraphics[width=\linewidth]{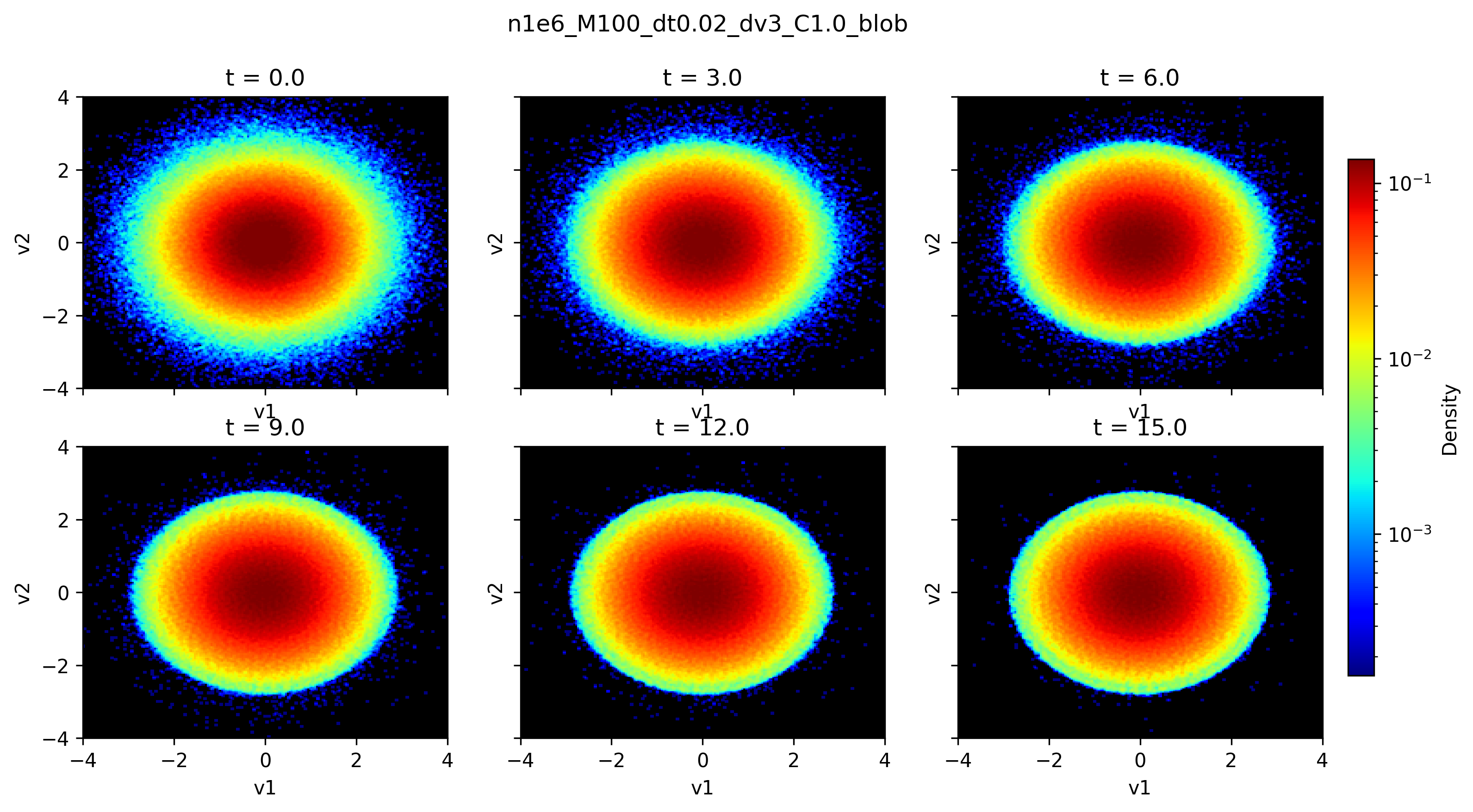}
\caption{Blob, $\nu = 1.0$}
\end{subfigure}
\hfill
\begin{subfigure}{0.42\linewidth}
\includegraphics[width=\linewidth]{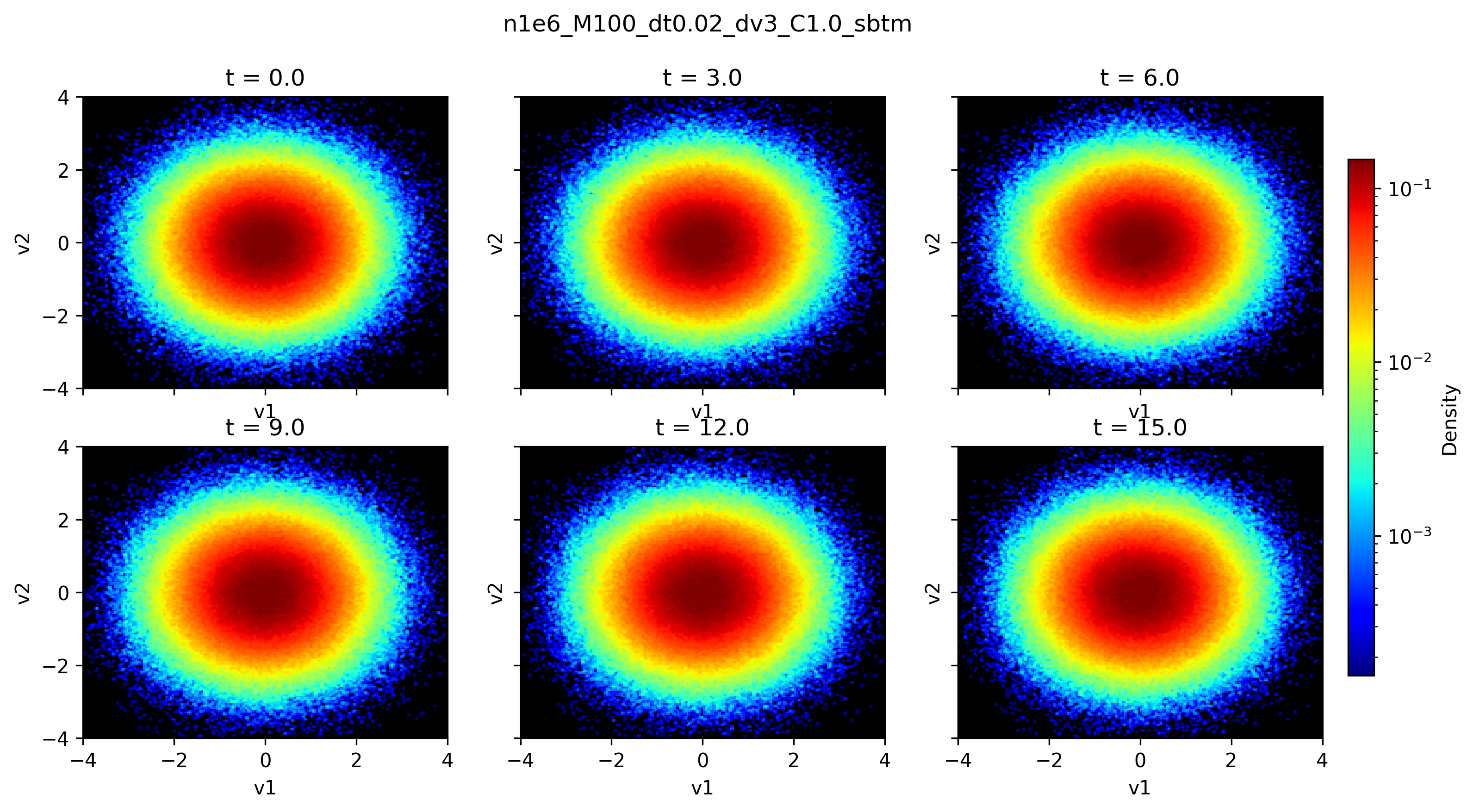}
\caption{SBTM, $\nu = 1.0$}
\end{subfigure}
\begin{subfigure}{0.42\linewidth}
\includegraphics[width=\linewidth]{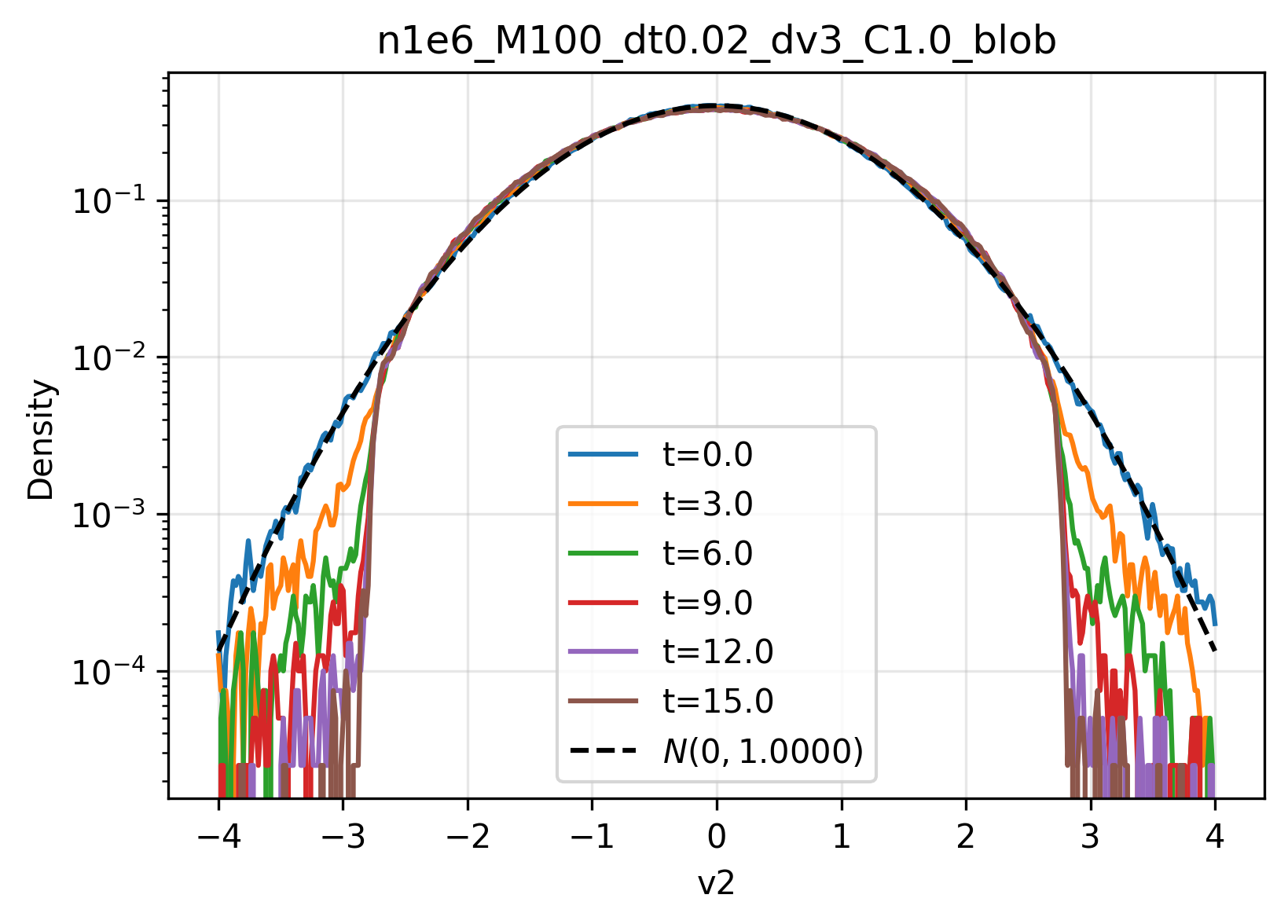}
\caption{Blob, $v_2$-marginal (log), $\nu = 1.0$}
\end{subfigure}
\hfill
\begin{subfigure}{0.42\linewidth}
\includegraphics[width=\linewidth]{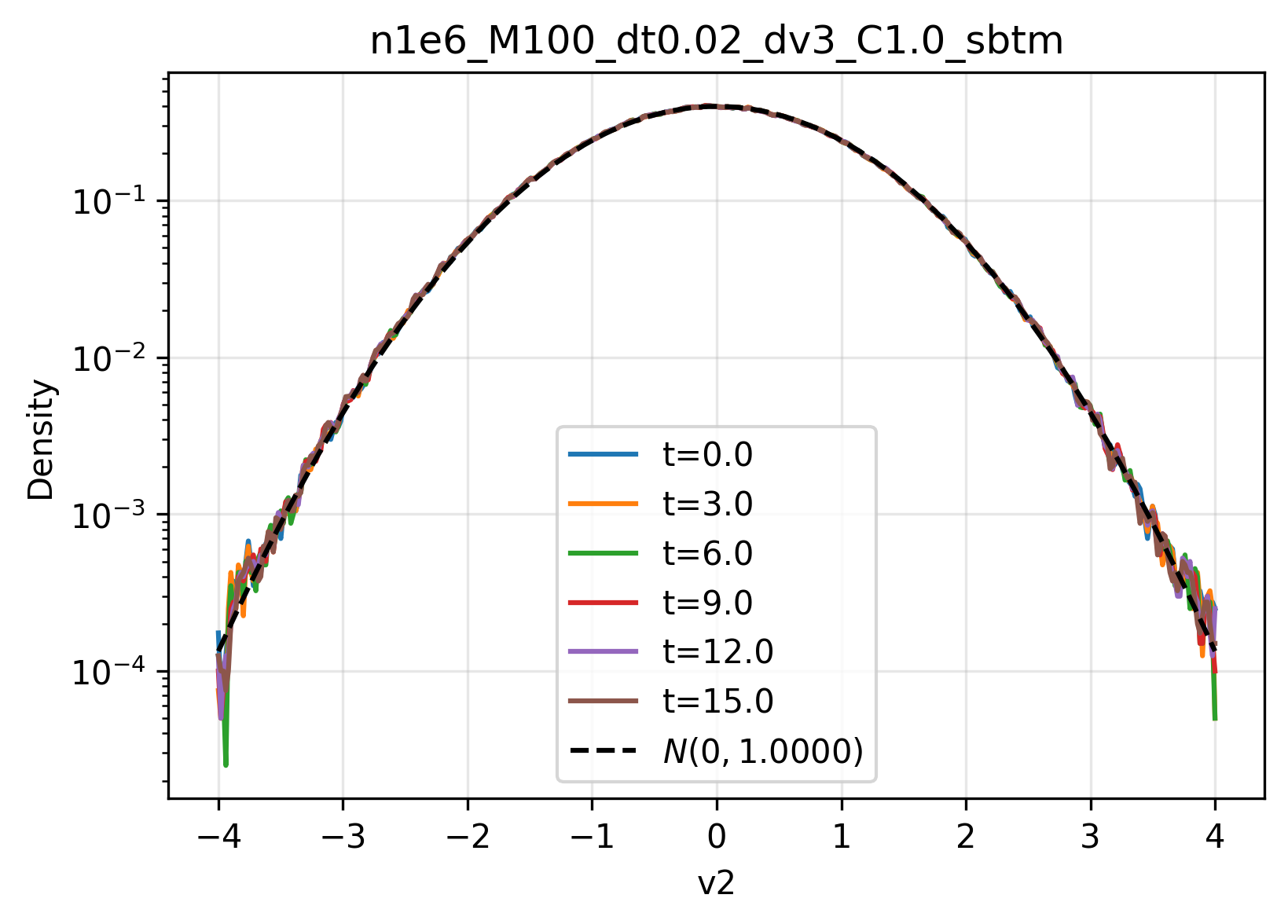}
\caption{SBTM, $v_2$-marginal (log), $\nu = 1.0$}
\end{subfigure}
\caption{Landau damping: velocity-space distributions at $n = 10^6$.  Rows 1--2: $\nu = 0.4$ (heatmaps and $v_2$-marginal).  Rows 3--4: $\nu = 1.0$.  SBTM maintains smooth Gaussian tails; the blob method shows unphysical sharp cutoffs at $|v_2| \approx 3$.}
\label{fig:landau_v1v2}
\label{fig:landau_v1v2_C10}
\end{figure}

\FloatBarrier

\subsection{Two-Stream Instability}
\label{sec:twostream}

\textbf{Setup.}  Two counter-streaming Maxwellian beams with a small spatial perturbation:
\begin{equation}
\label{eq:twostream_ic}
f_0(x, v) = \frac{1 + \alpha \cos(kx)}{2} \left[\mathcal{N}(v_1; c, 1) + \mathcal{N}(v_1; -c, 1) \right] \prod_{j \geq 2} \mathcal{N}(v_j; 0, 1),
\end{equation}
with beam speed $c = 2.4$, wavenumber $k = 1/5$, perturbation $\alpha = 1/200$, domain $L = 2\pi/k$, $M = 100$, $\Delta t = 0.05$, $d_v = 3$, $\nu = 0.32$, $t_{\mathrm{final}} = 50$, and $K = 100$ ISM steps per time step.  We test $n \in \{5 \times 10^5, 10^6, 3 \times 10^6\}$.

\textbf{Estimated entropy production and total energy.
} Figure~\ref{fig:twostream_diagnostics} shows the estimated entropy production
and total energy evolution.  As in the Landau damping case, SBTM dissipates estimated entropy consistently at all particle counts -- the SBTM curves nearly coincide (plot (d)) -- while the blob method overestimates entropy production, especially at lower $n$.  The blob method also introduces visibly larger energy drift than SBTM (plot (c)). 

\begin{figure}[h]
\centering
\begin{subfigure}{0.48\linewidth}
\includegraphics[width=\linewidth]{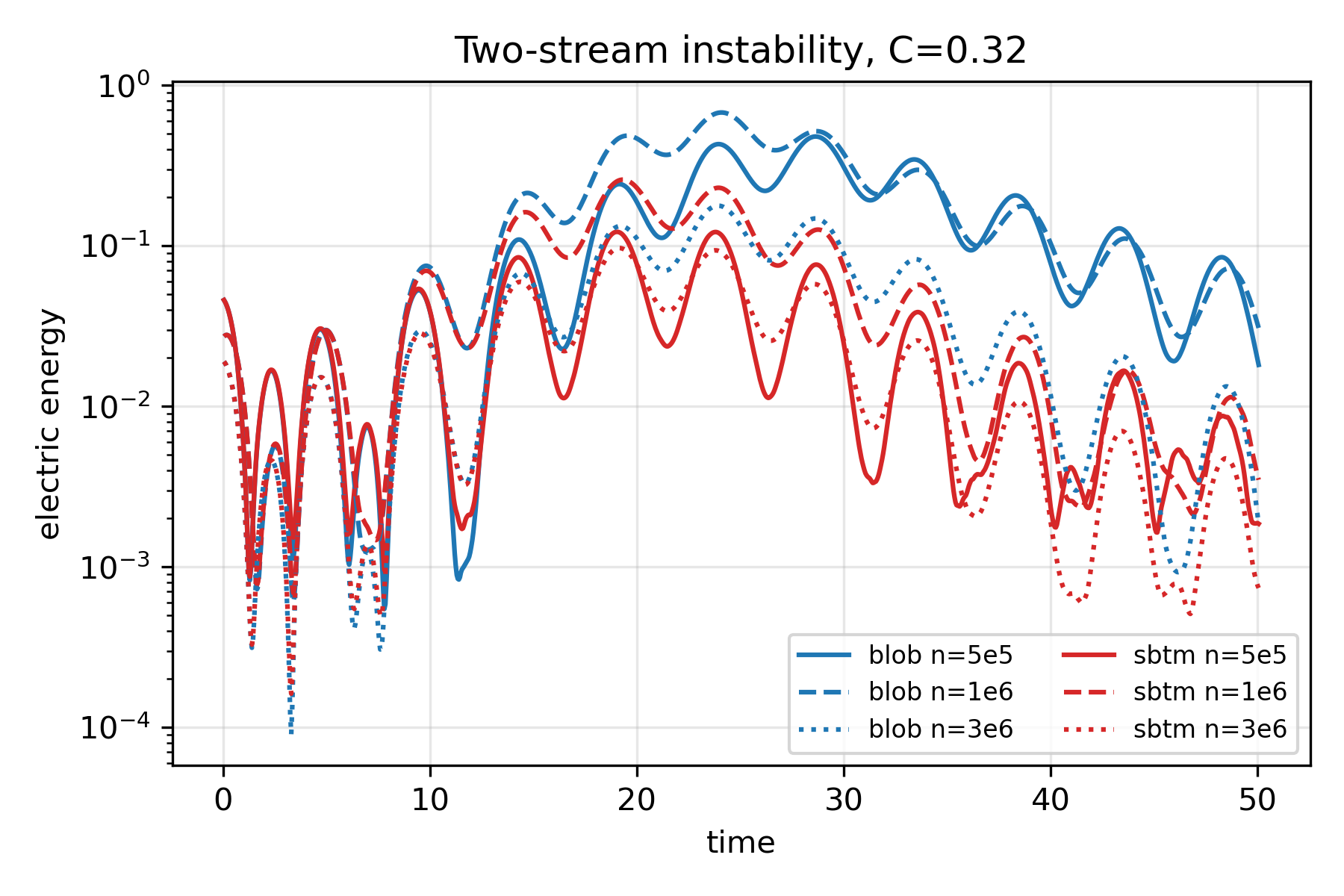}
\caption{Electric field energy}
\end{subfigure}
\hfill
\begin{subfigure}{0.48\linewidth}
\includegraphics[width=\linewidth]{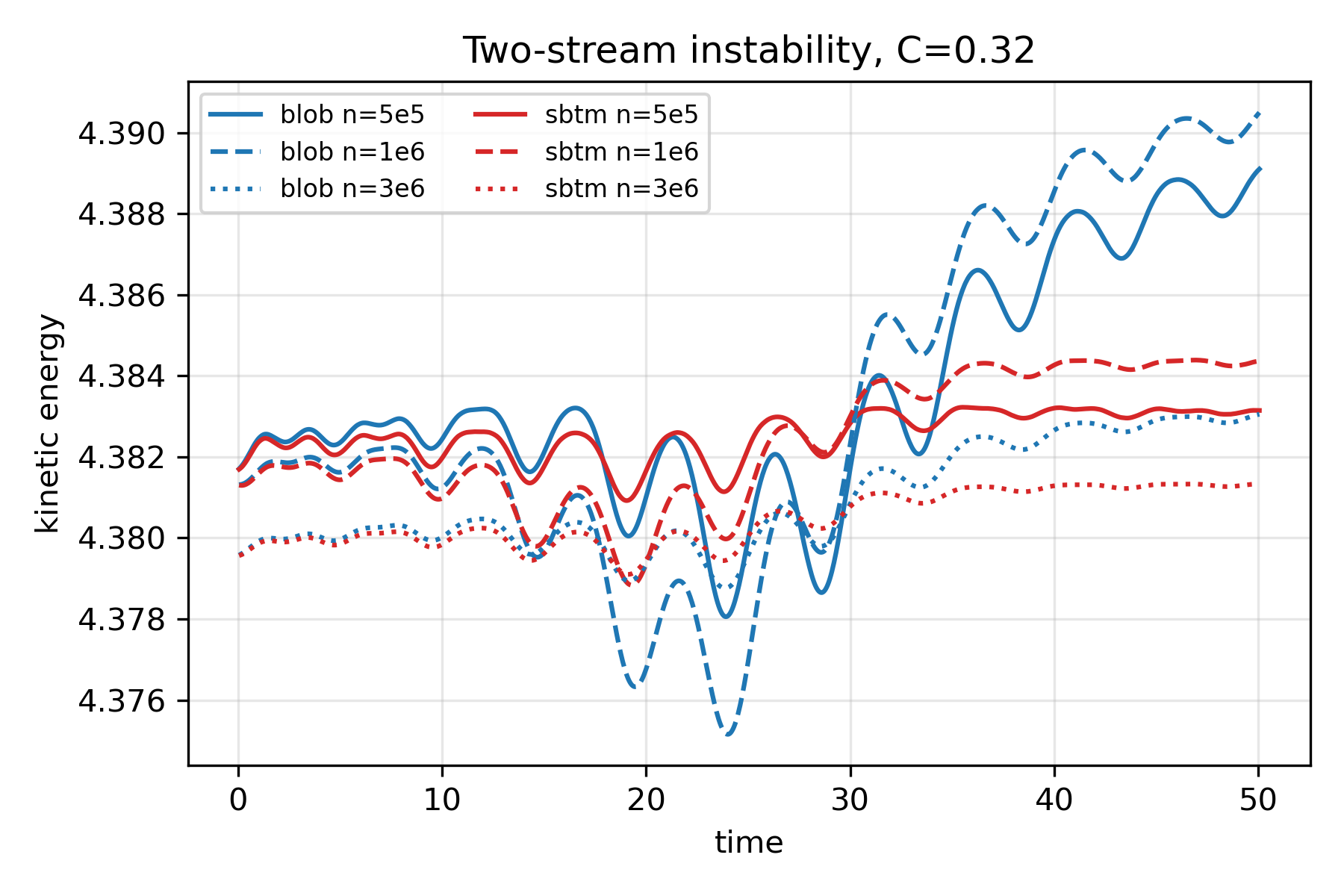}
\caption{Kinetic energy}
\end{subfigure}
\begin{subfigure}{0.48\linewidth}
\includegraphics[width=\linewidth]{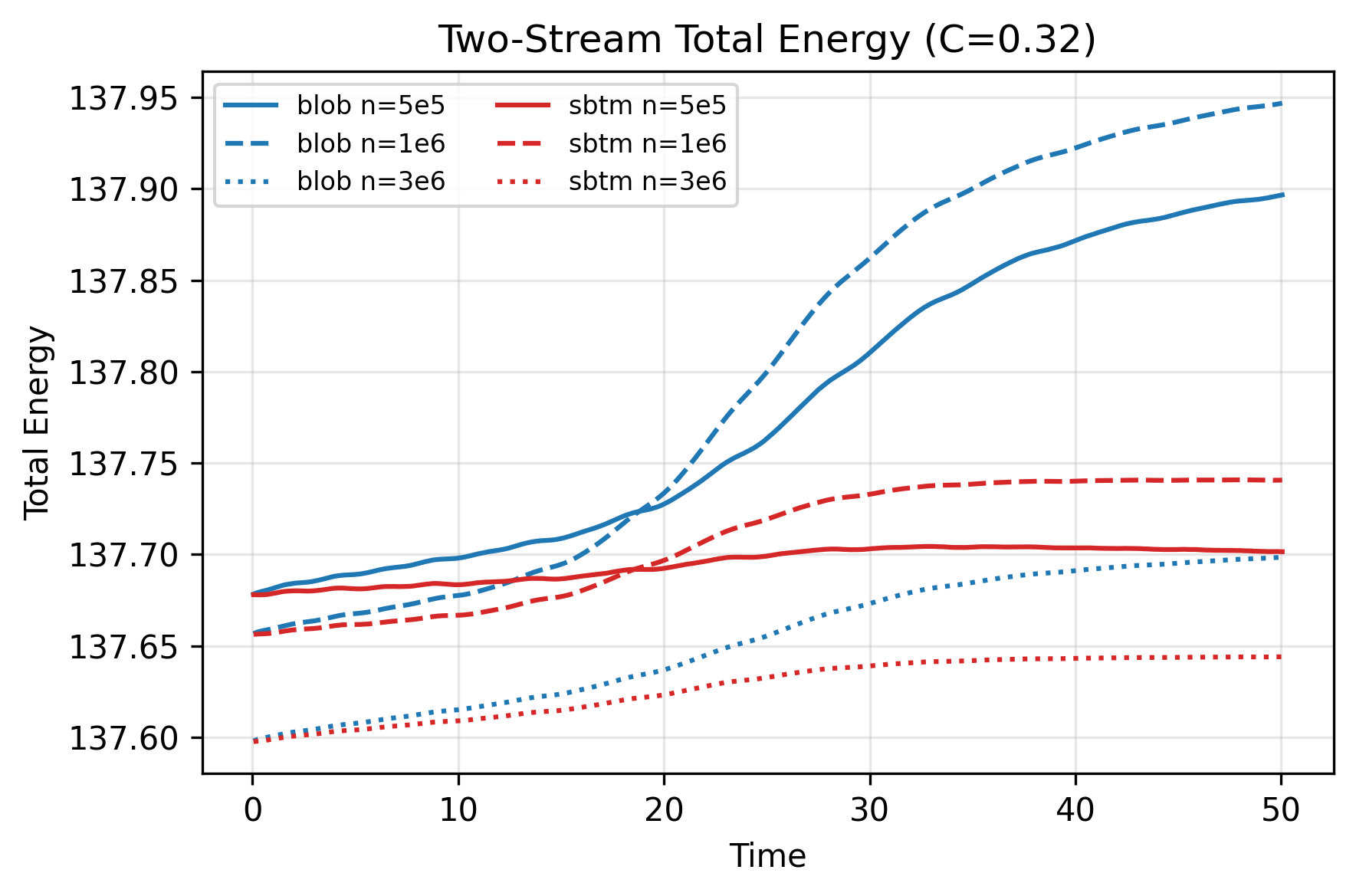}
\caption{Total energy}
\end{subfigure}
\hfill
\begin{subfigure}{0.48\linewidth}
\includegraphics[width=\linewidth]{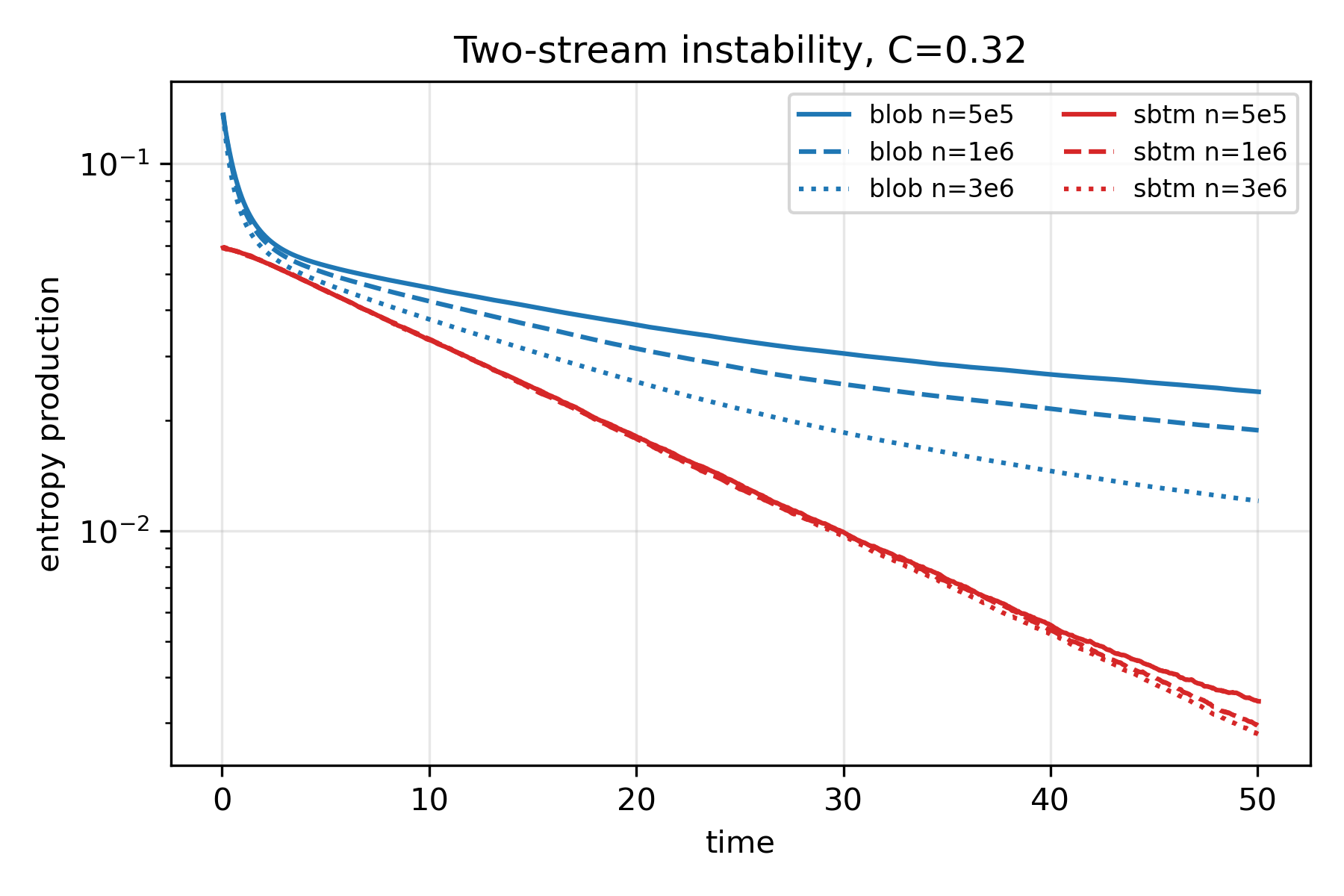}
\caption{Estimated entropy production}
\end{subfigure}
\caption{Two-stream instability: energy and estimated entropy diagnostics at $\nu = 0.32$.  Blob (blue) vs SBTM (red) at $n = 5 \times 10^5$ (solid), $10^6$ (dashed), $3 \times 10^6$ (dotted).}
\label{fig:twostream_diagnostics}
\end{figure}

\textbf{Phase space dynamics.}  Figure~\ref{fig:twostream_phase} compares $(x, v_1)$ phase space snapshots across methods and particle counts.  The phase space vortex formed by the two-stream instability should dissipate by $t = 50$.  SBTM achieves full vortex dissipation at all particle counts, while the blob method requires $n = 3 \times 10^6$ to approach the same level of dissipation.

\begin{figure}[t]
\centering
\begin{subfigure}{0.48\linewidth}
\includegraphics[width=\linewidth]{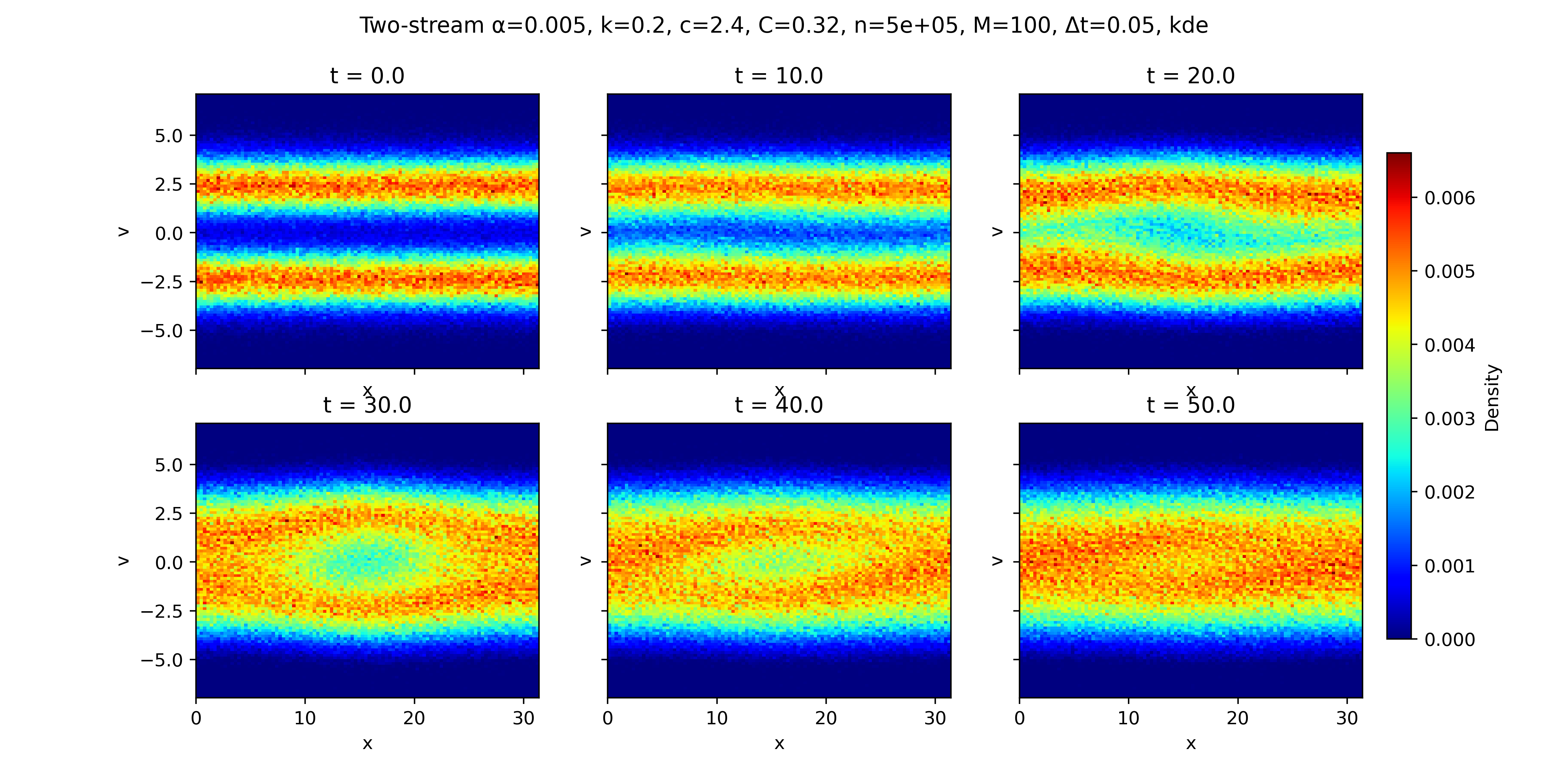}
\caption{Blob, $n = 5 \times 10^5$}
\end{subfigure}
\hfill
\begin{subfigure}{0.48\linewidth}
\includegraphics[width=\linewidth]{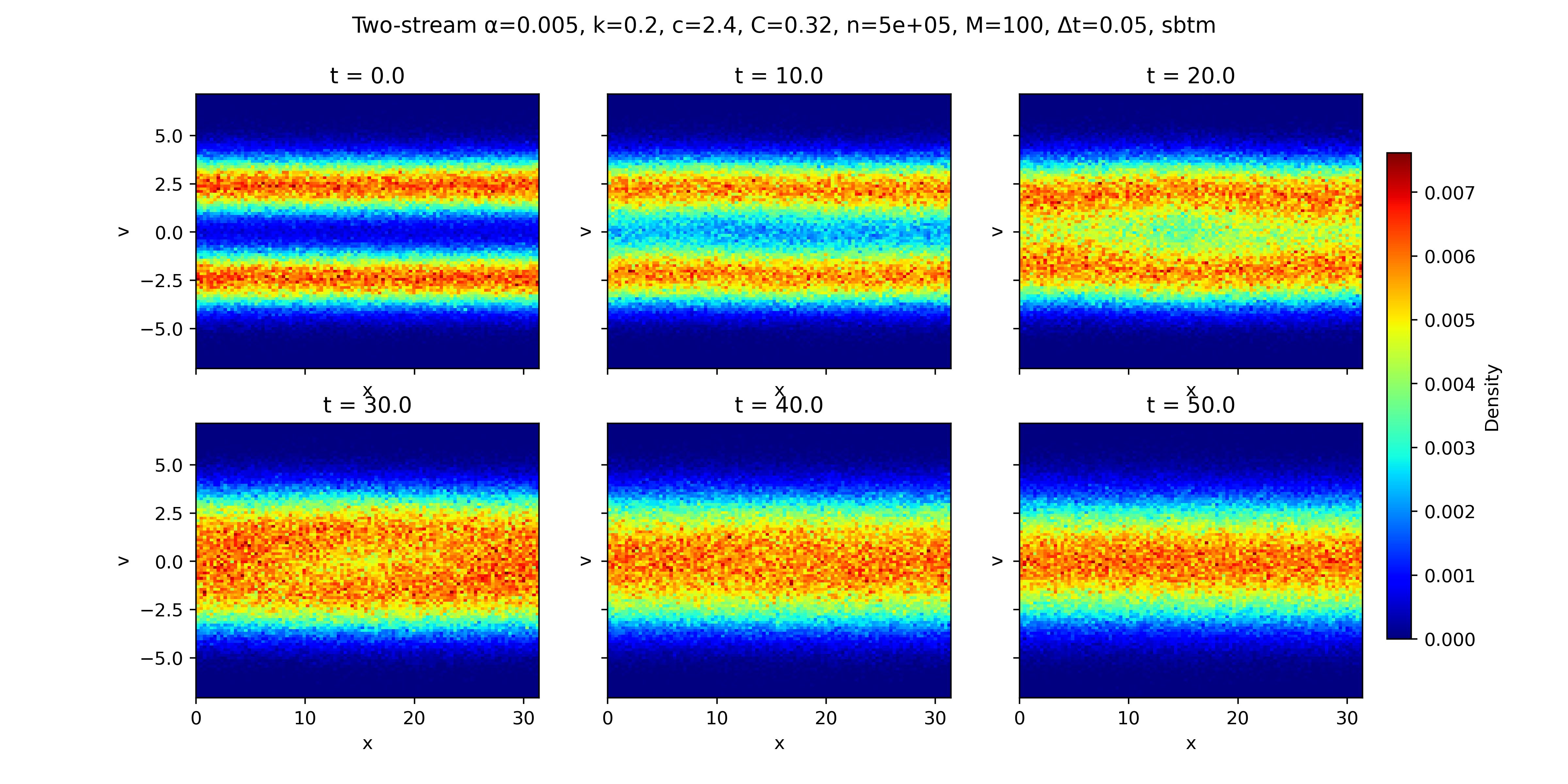}
\caption{SBTM, $n = 5 \times 10^5$}
\end{subfigure}
\begin{subfigure}{0.48\linewidth}
\includegraphics[width=\linewidth]{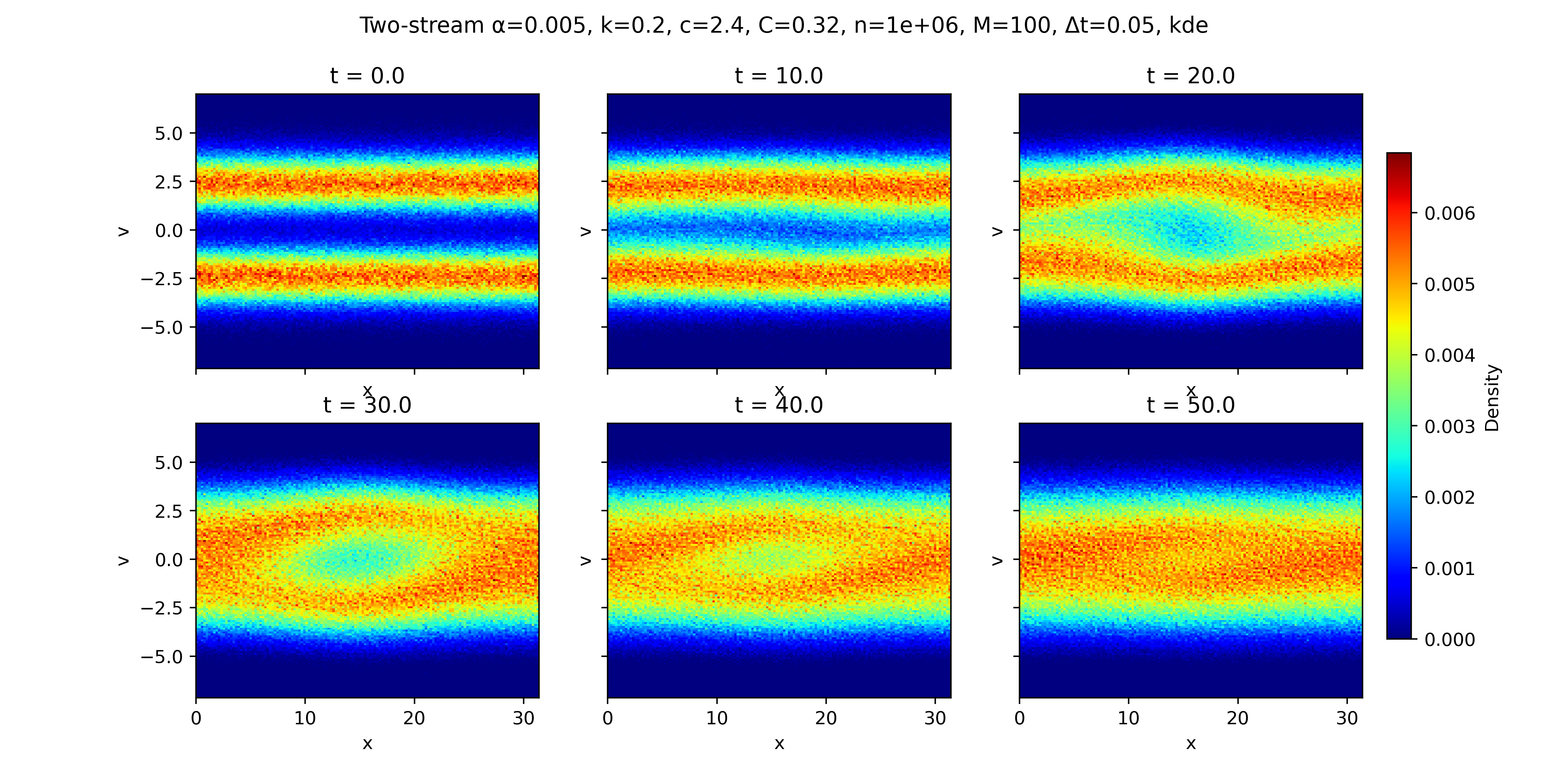}
\caption{Blob, $n = 10^6$}
\end{subfigure}
\hfill
\begin{subfigure}{0.48\linewidth}
\includegraphics[width=\linewidth]{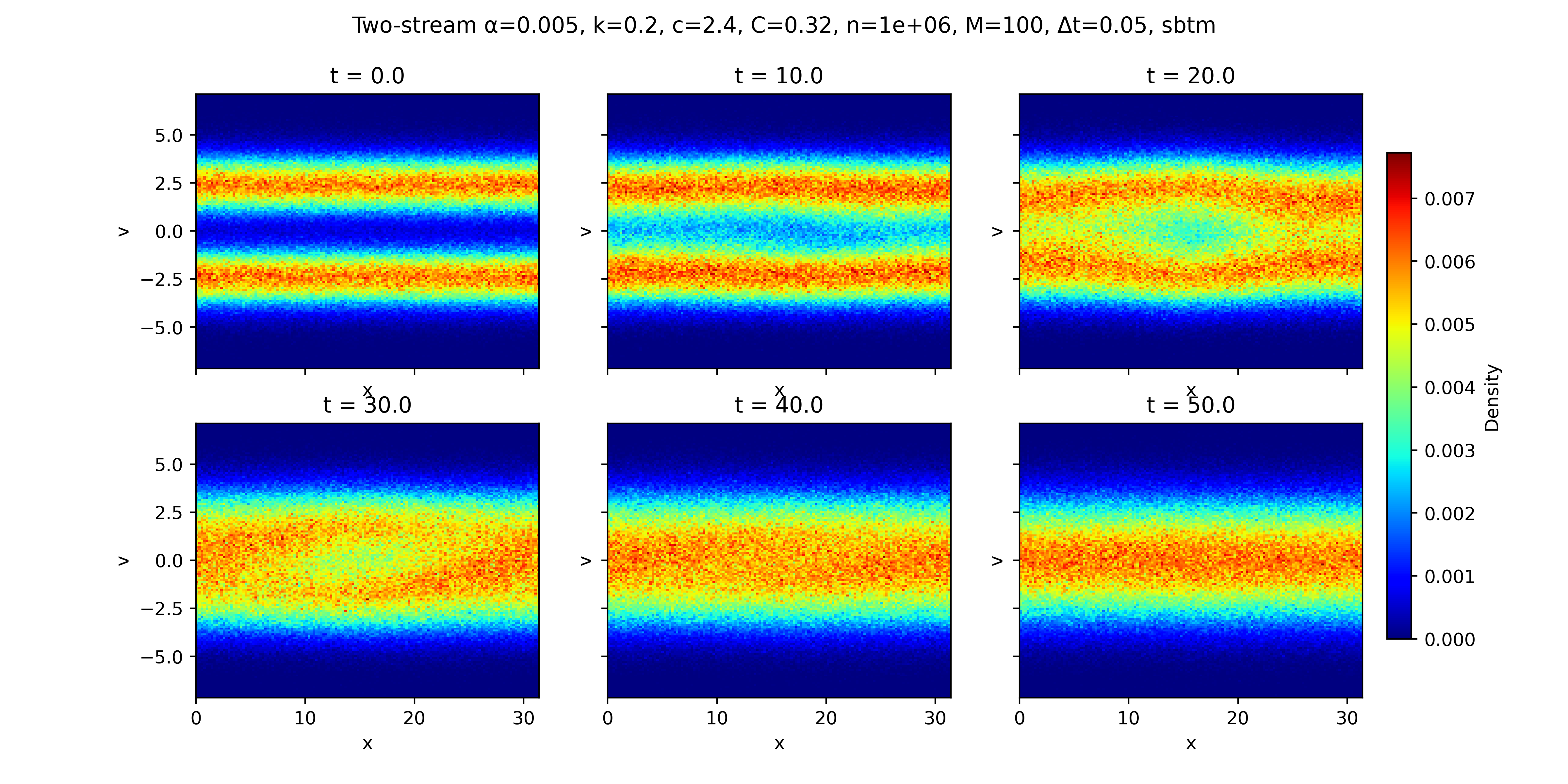}
\caption{SBTM, $n = 10^6$}
\end{subfigure}
\begin{subfigure}{0.48\linewidth}
\includegraphics[width=\linewidth]{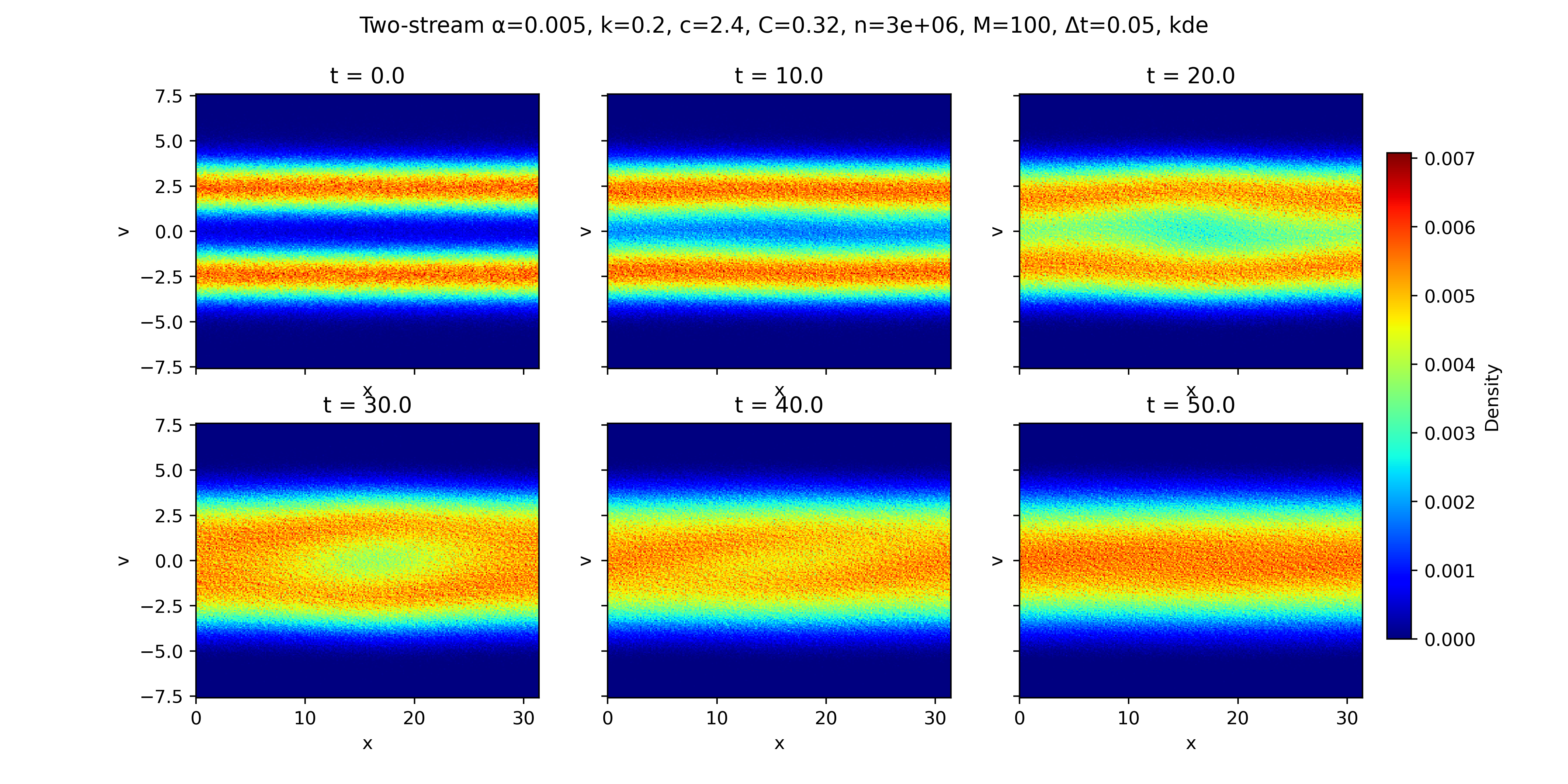}
\caption{Blob, $n = 3 \times 10^6$}
\end{subfigure}
\hfill
\begin{subfigure}{0.48\linewidth}
\includegraphics[width=\linewidth]{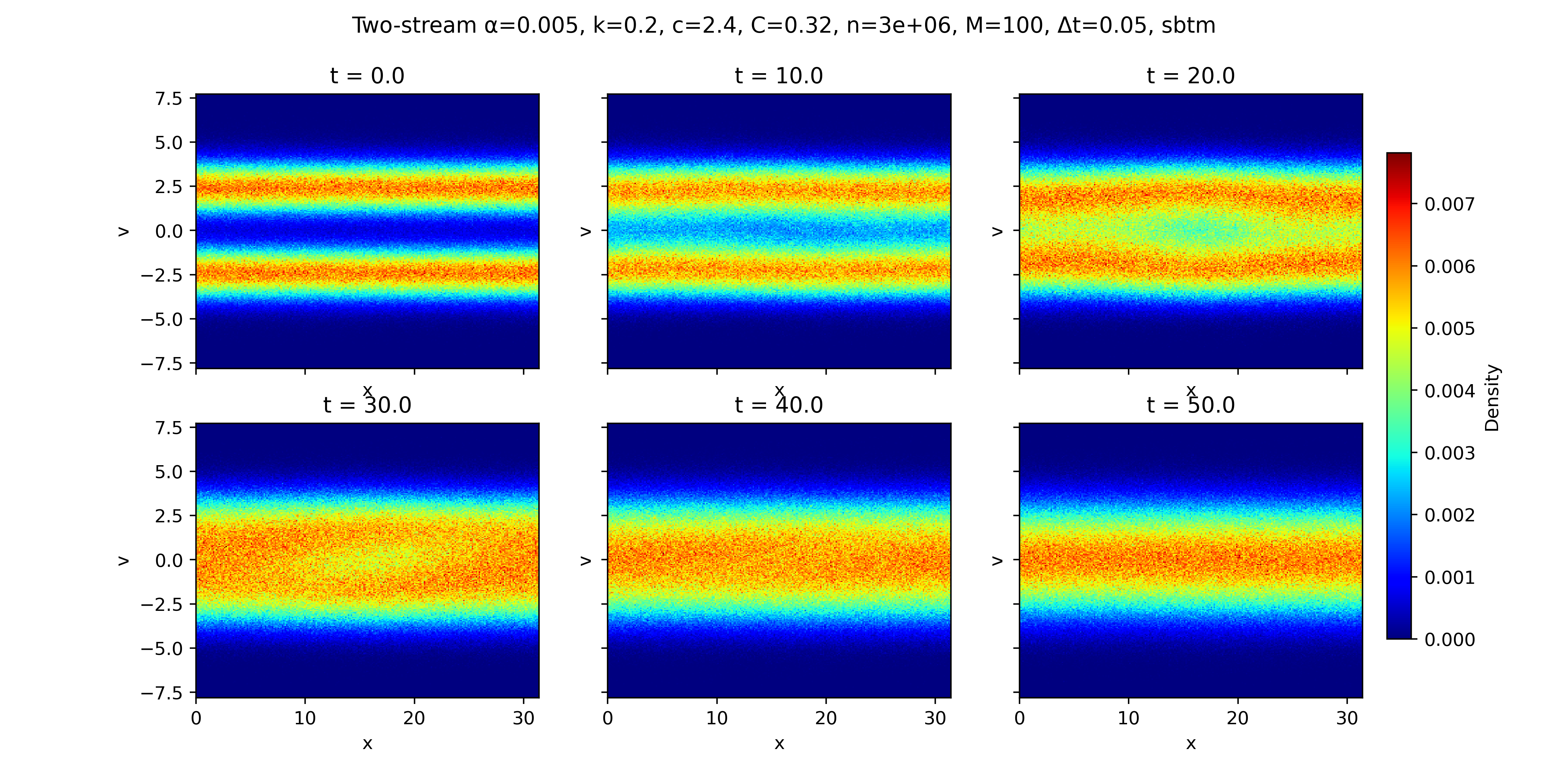}
\caption{SBTM, $n = 3 \times 10^6$}
\end{subfigure}
\caption{Two-stream instability: $(x, v_1)$ phase space at $\nu = 0.32$ across particle counts.  SBTM achieves full vortex dissipation at all particle counts, while the blob method requires $n = 3 \times 10^6$ to approach the same level of dissipation.}
\label{fig:twostream_phase}
\end{figure}

\textbf{Score estimation quality.}  Figure~\ref{fig:twostream_score} shows score quiver plots at $n = 10^6$.  At $t = 0$, where the true score is known analytically, SBTM achieves low mean squared error (MSE) while the blob method has substantially higher MSE.  At $t = t_{\mathrm{final}}$, SBTM scores are smooth and physically consistent: the score vectors point toward the origin with magnitude increasing away from it, as expected for a near-Maxwellian distribution ($\nabla_v \log f \approx -v/\sigma^2$).  The blob method produces noisy, incoherent scores in low-density tail regions. 

\begin{figure}[t]
\centering
\begin{subfigure}{0.48\linewidth}
\includegraphics[width=\linewidth]{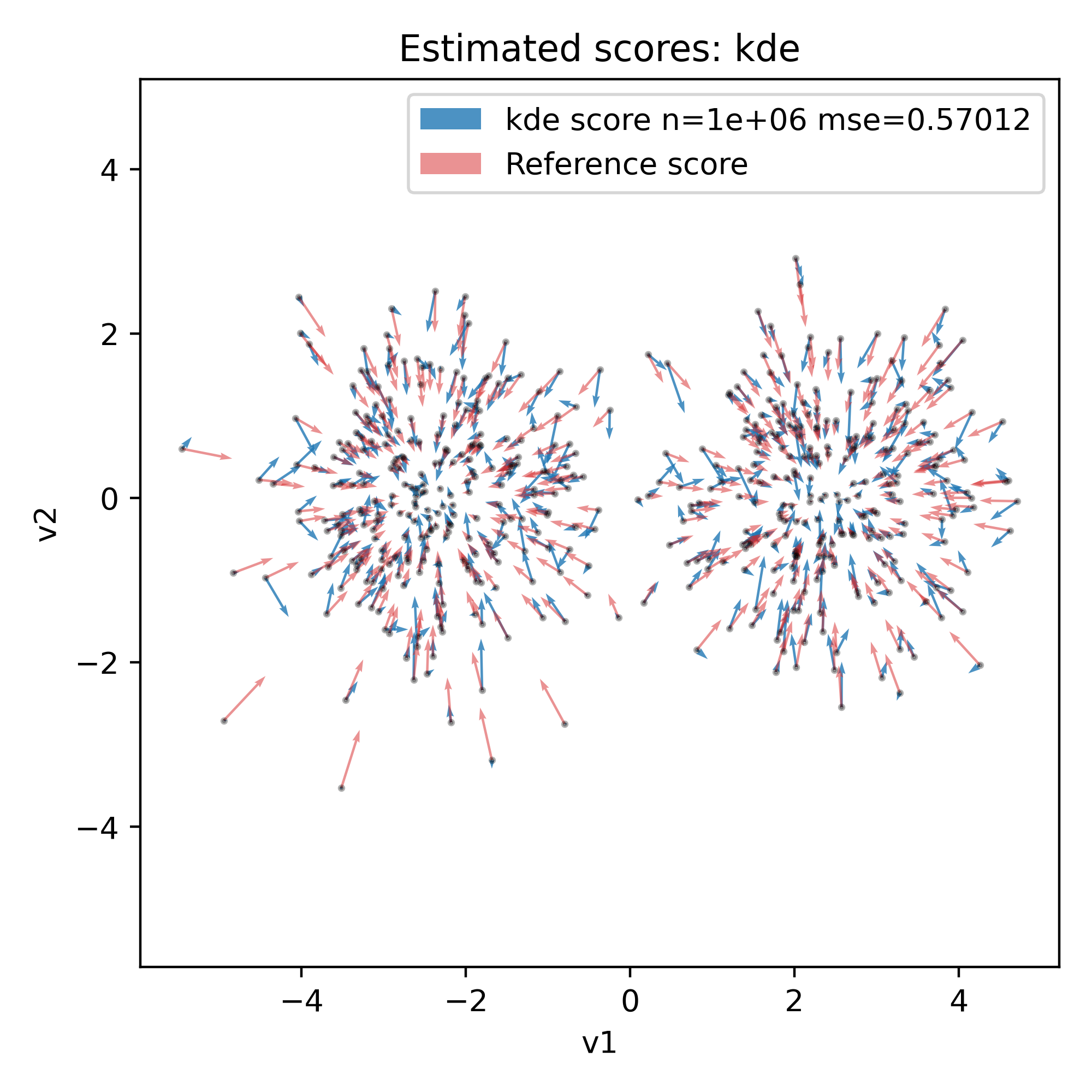}
\caption{Blob, $t = 0$}
\end{subfigure}
\hfill
\begin{subfigure}{0.48\linewidth}
\includegraphics[width=\linewidth]{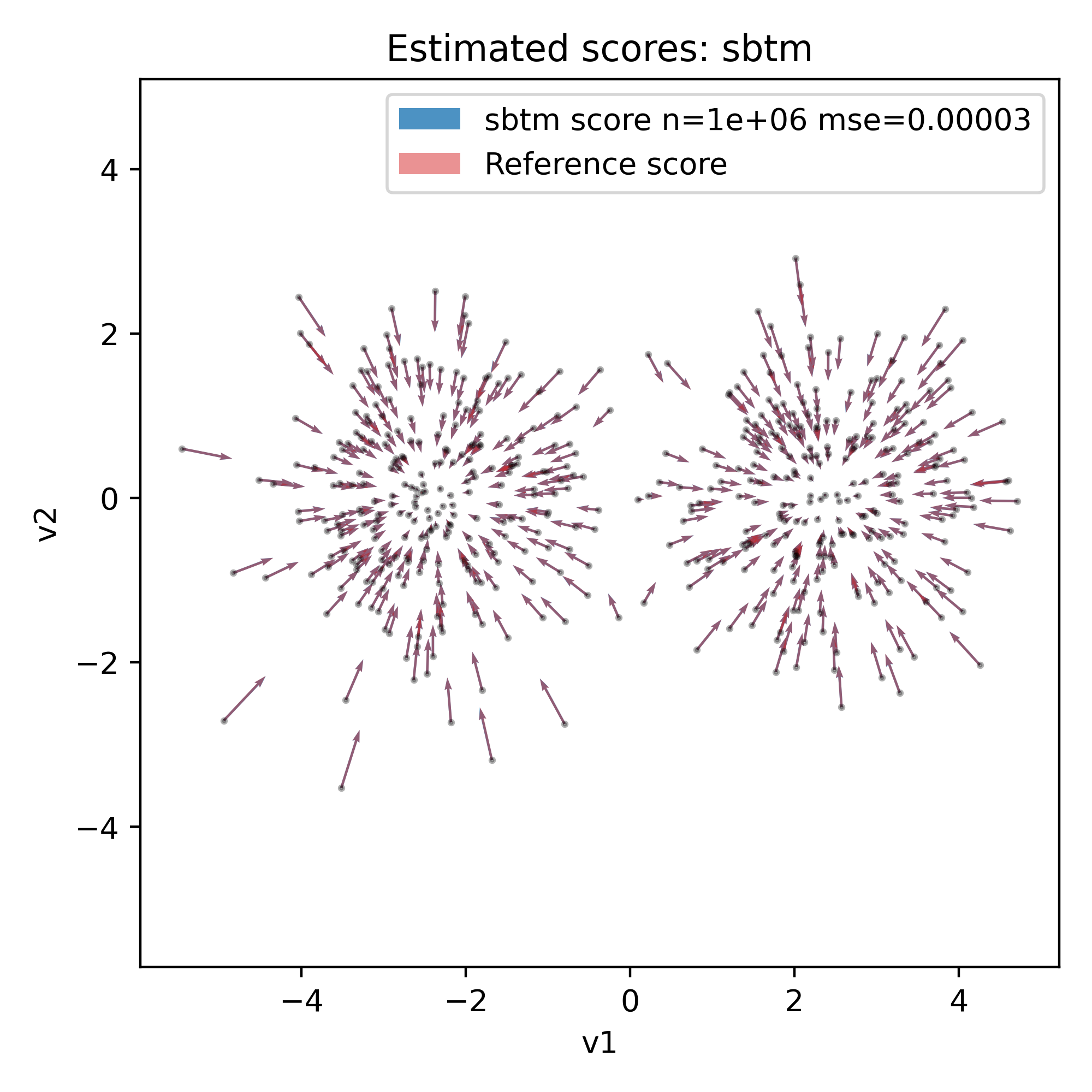}
\caption{SBTM, $t = 0$}
\end{subfigure}
\begin{subfigure}{0.48\linewidth}
\includegraphics[width=\linewidth]{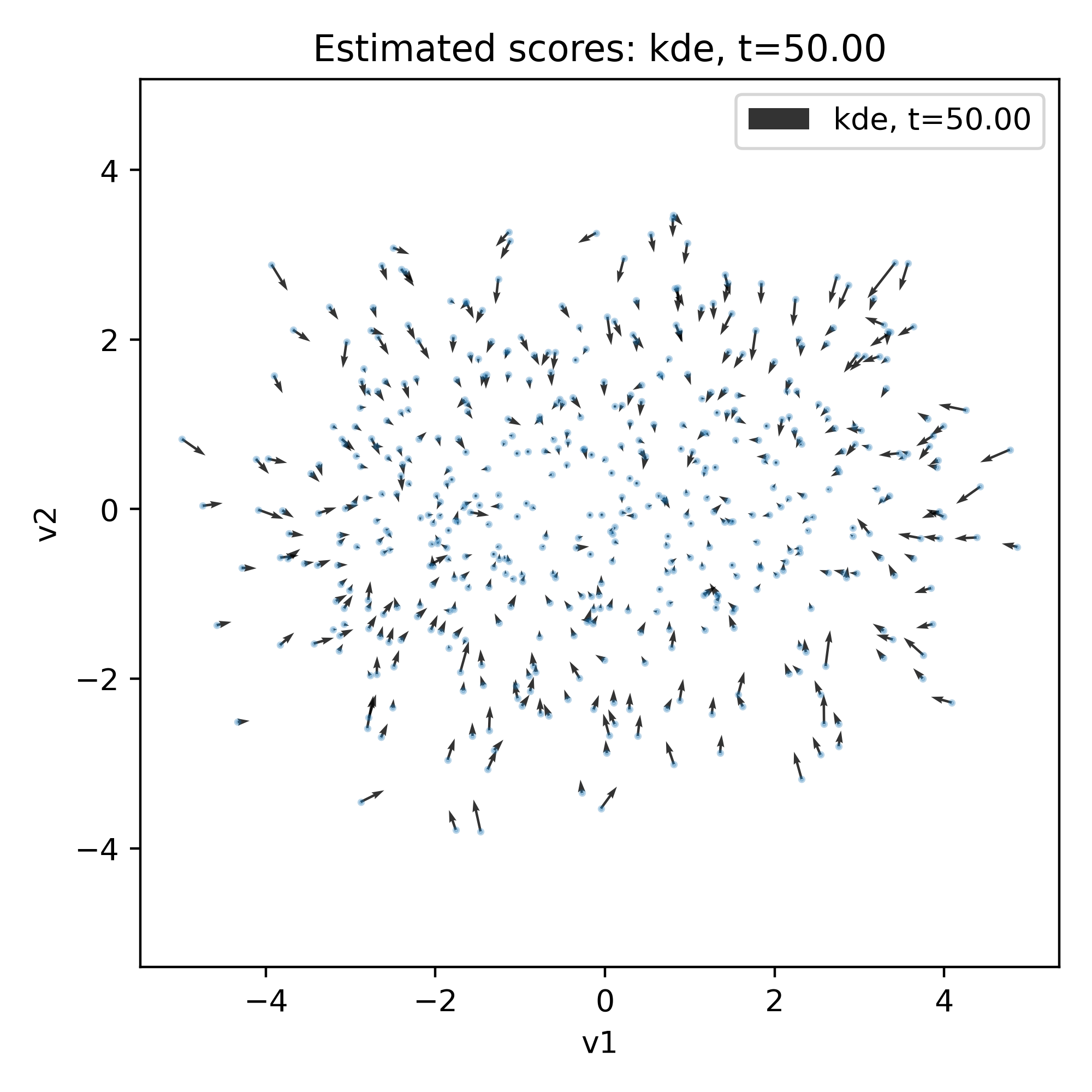}
\caption{Blob, $t = t_{\mathrm{final}}$}
\end{subfigure}
\hfill
\begin{subfigure}{0.48\linewidth}
\includegraphics[width=\linewidth]{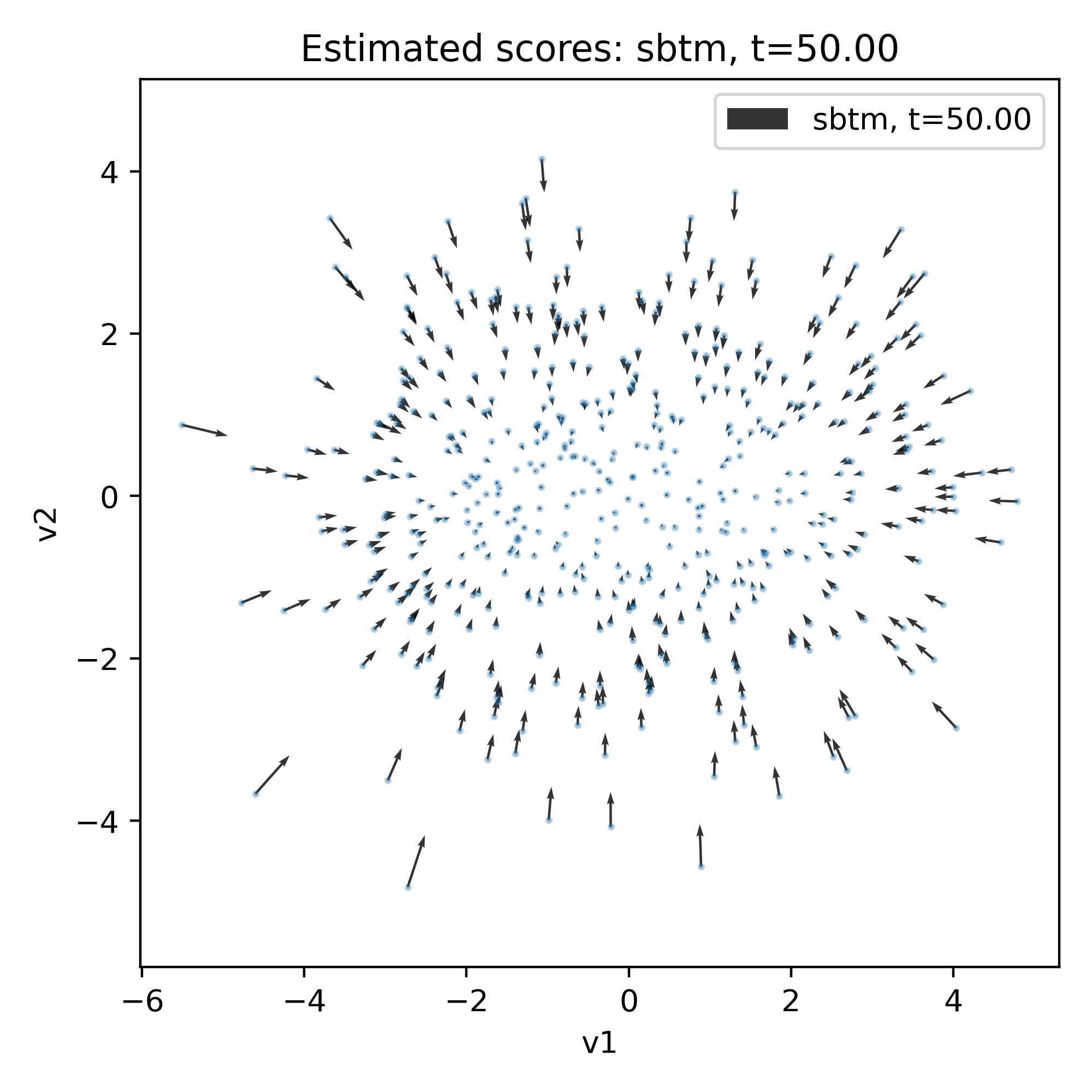}
\caption{SBTM, $t = t_{\mathrm{final}}$}
\end{subfigure}
\caption{Two-stream instability: score quiver plots at $\nu = 0.24$, $n = 10^6$.  Top: initial time (true score in red; MSE in legend).  Bottom: final time.  SBTM produces smooth, physically consistent scores; the blob method has high MSE, particularly in low-density tail regions.}
\label{fig:twostream_score}
\end{figure}

%

\textbf{Collision force quiver plots.}  Figure~\ref{fig:twostream_flow} shows the collision force $U^{\eta}(x_p,v_p)$ at six times for blob and SBTM at $n = 5 \times 10^5$.  SBTM produces a physically consistent collision field: it preserves the vertical ($v_1 \to -v_1$) and horizontal ($v_2 \to -v_2$) symmetry of the two-stream initial condition and vanishes by $t = 50$ as the system equilibrates.  The blob method's collision field is noisy and incoherent, breaks these symmetries, and persists at $t = 50$ even though the system should be near equilibrium.

\begin{figure}[htp!]
\centering
\begin{subfigure}{0.31\linewidth}
\includegraphics[width=\linewidth]{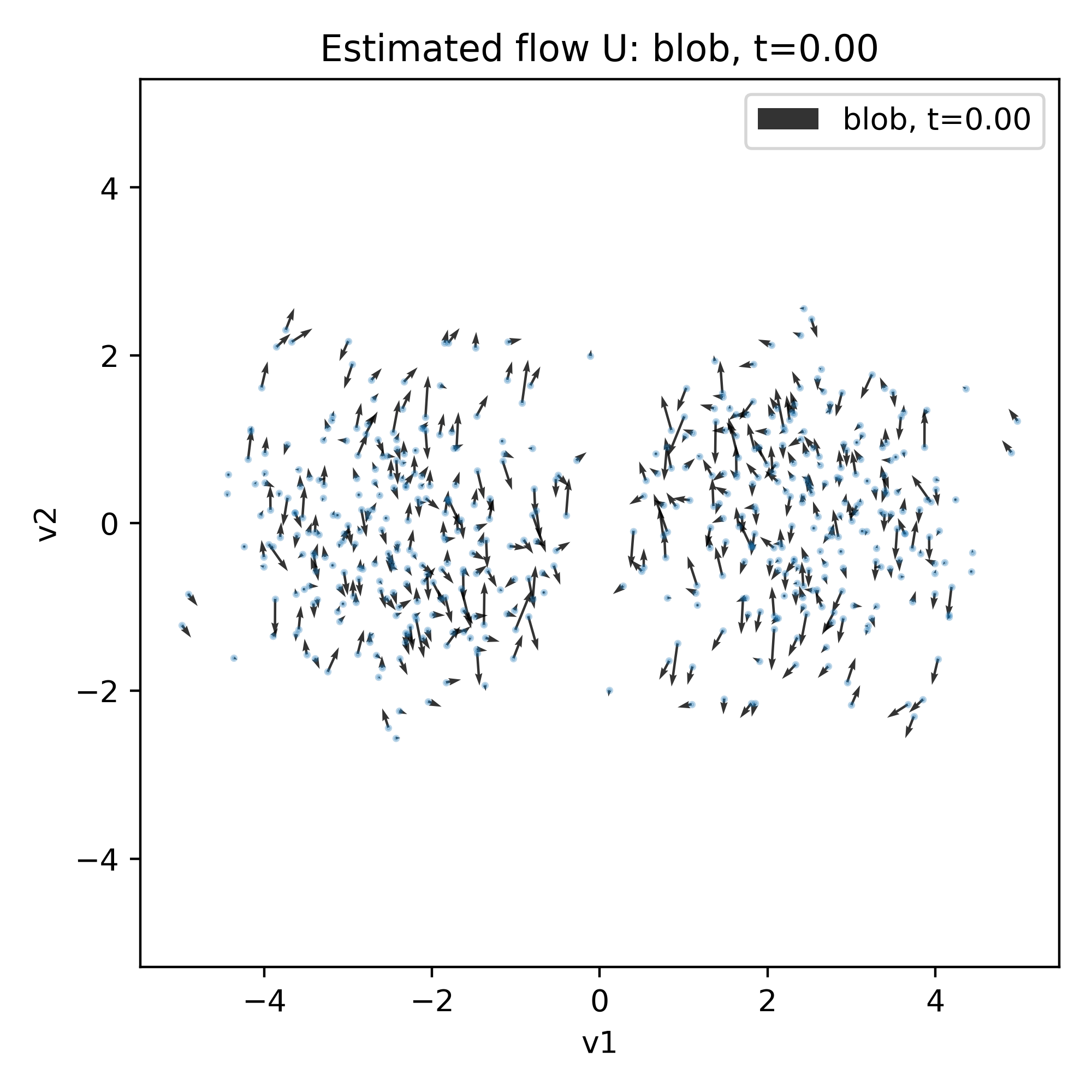}
\caption{Blob, $t = 0$}
\end{subfigure}
\hfill
\begin{subfigure}{0.31\linewidth}
\includegraphics[width=\linewidth]{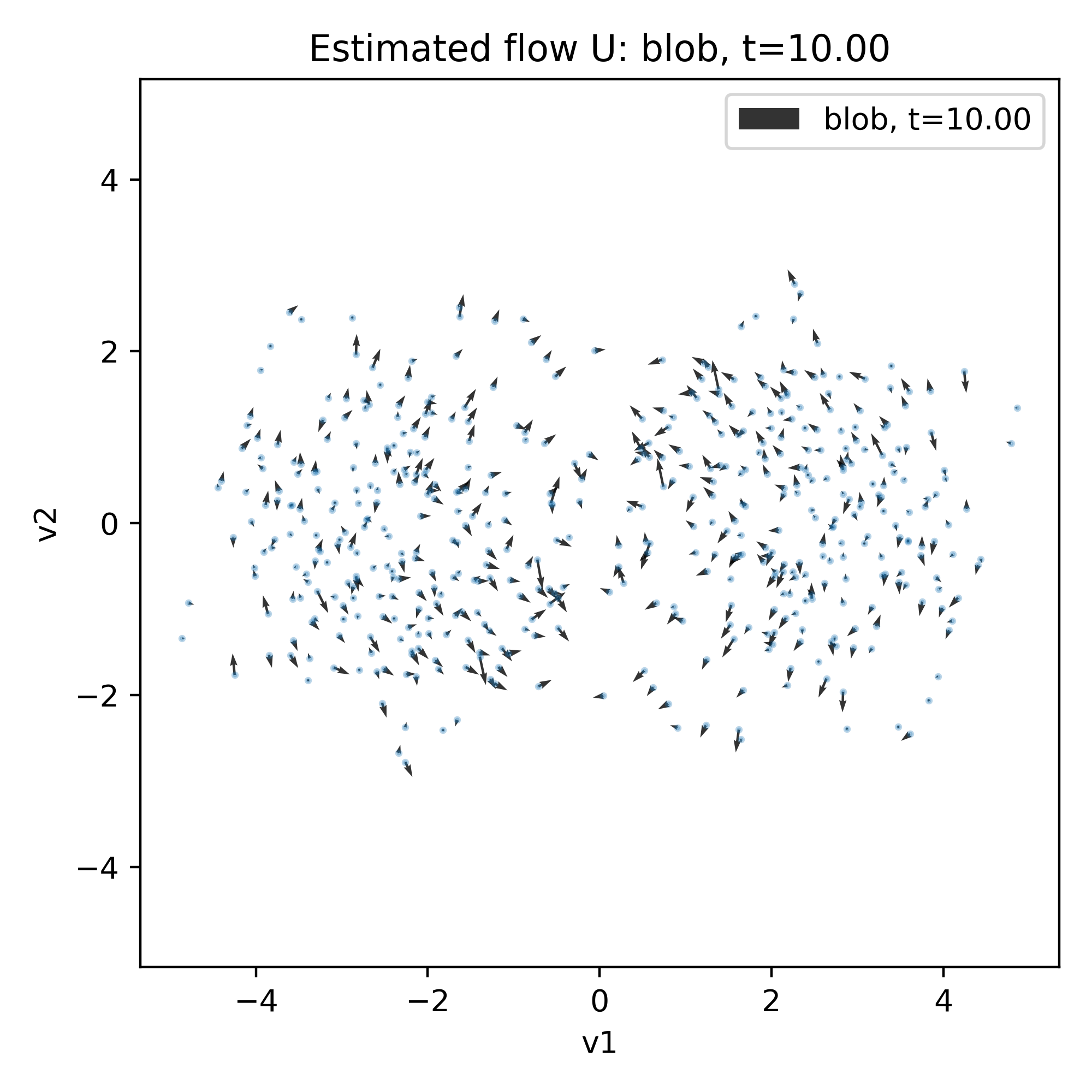}
\caption{Blob, $t = 10$}
\end{subfigure}
\hfill
\begin{subfigure}{0.31\linewidth}
\includegraphics[width=\linewidth]{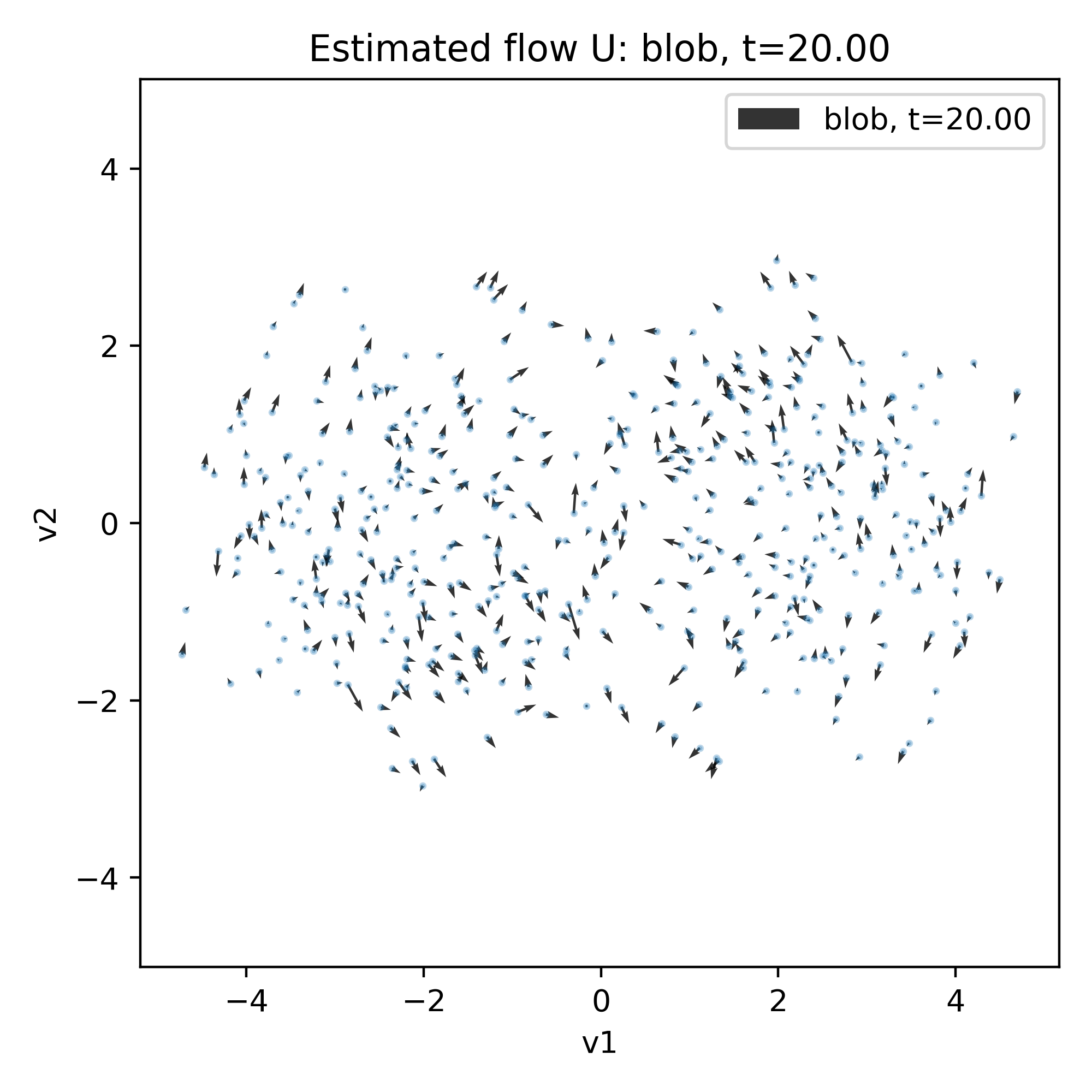}
\caption{Blob, $t = 20$}
\end{subfigure}
\begin{subfigure}{0.31\linewidth}
\includegraphics[width=\linewidth]{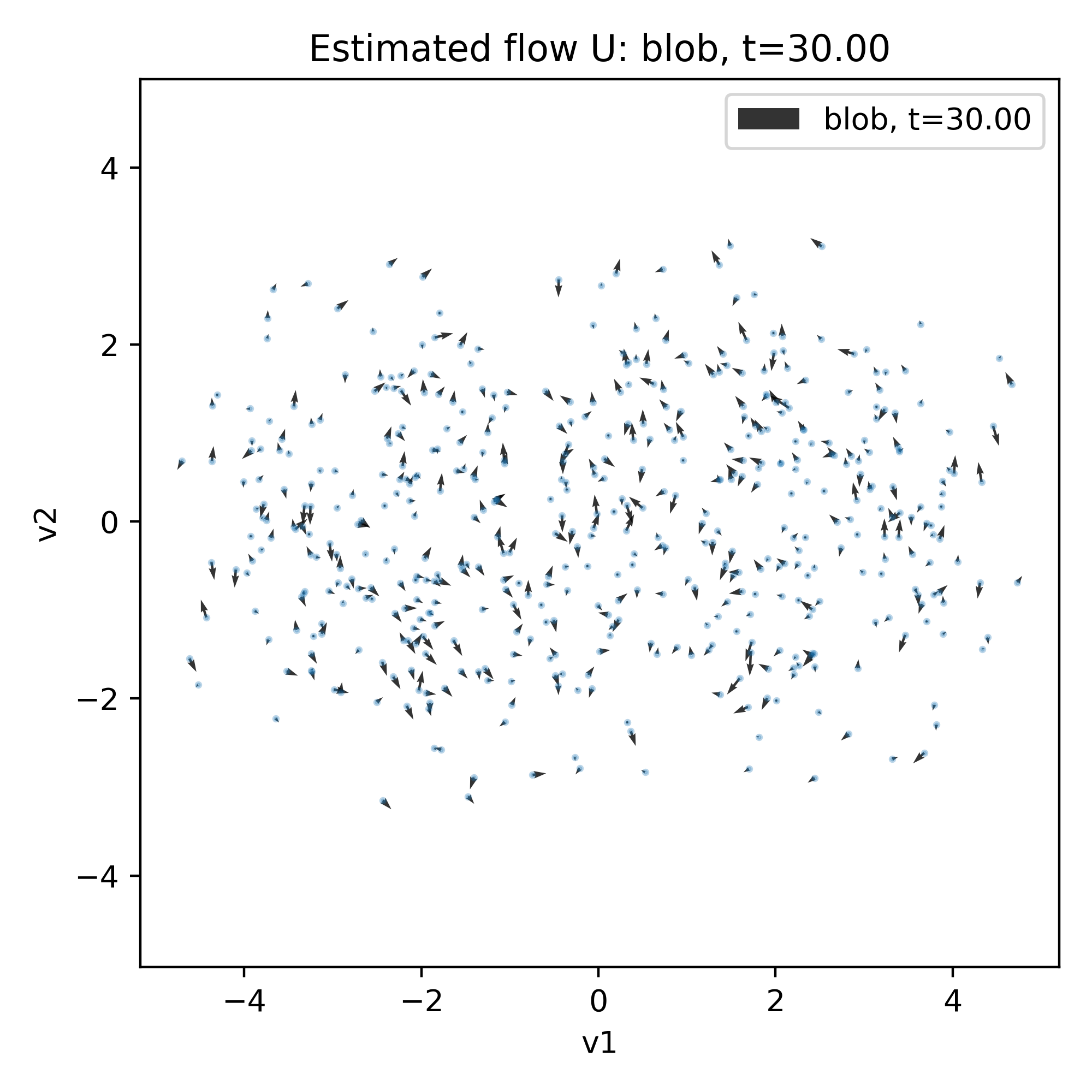}
\caption{Blob, $t = 30$}
\end{subfigure}
\hfill
\begin{subfigure}{0.31\linewidth}
\includegraphics[width=\linewidth]{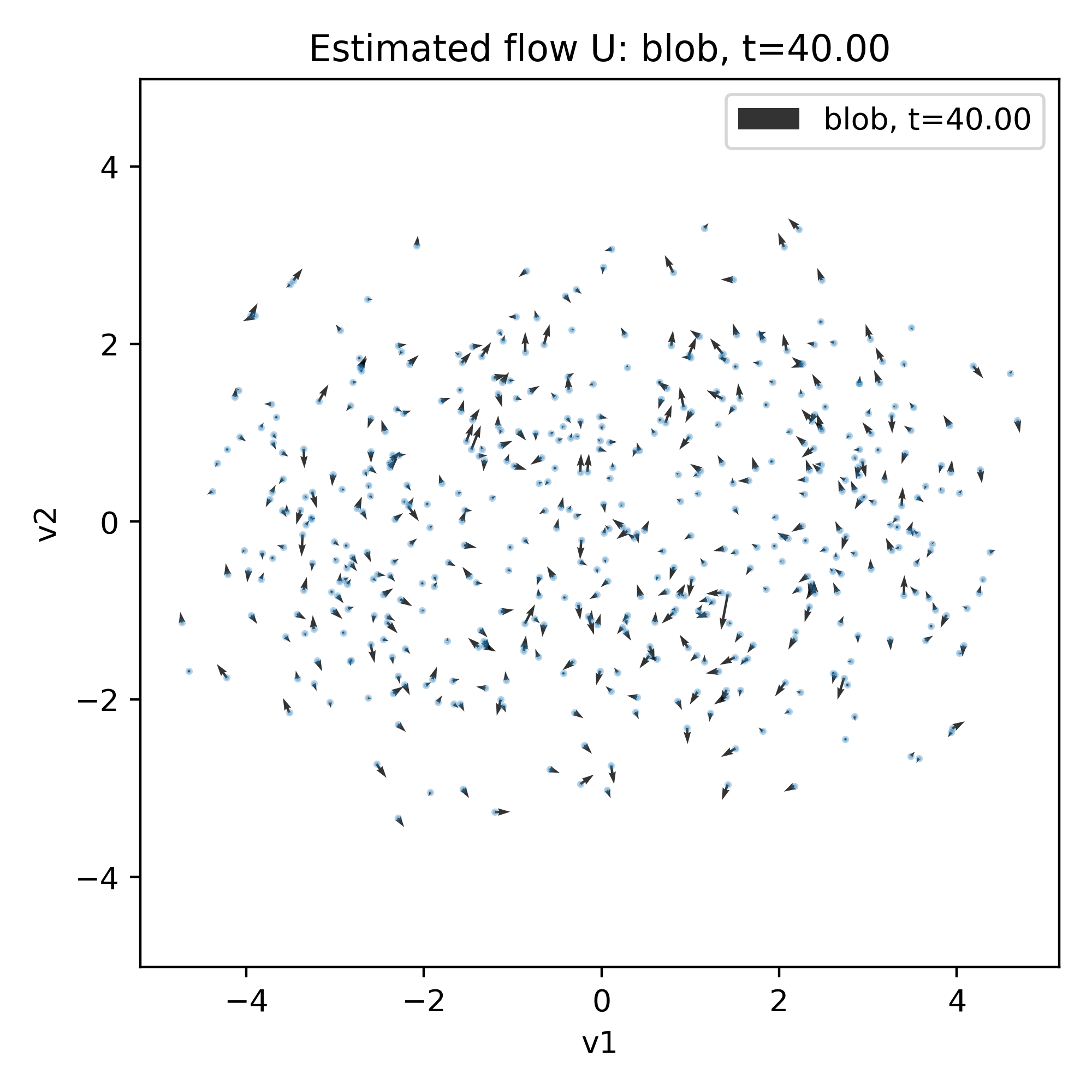}
\caption{Blob, $t = 40$}
\end{subfigure}
\hfill
\begin{subfigure}{0.31\linewidth}
\includegraphics[width=\linewidth]{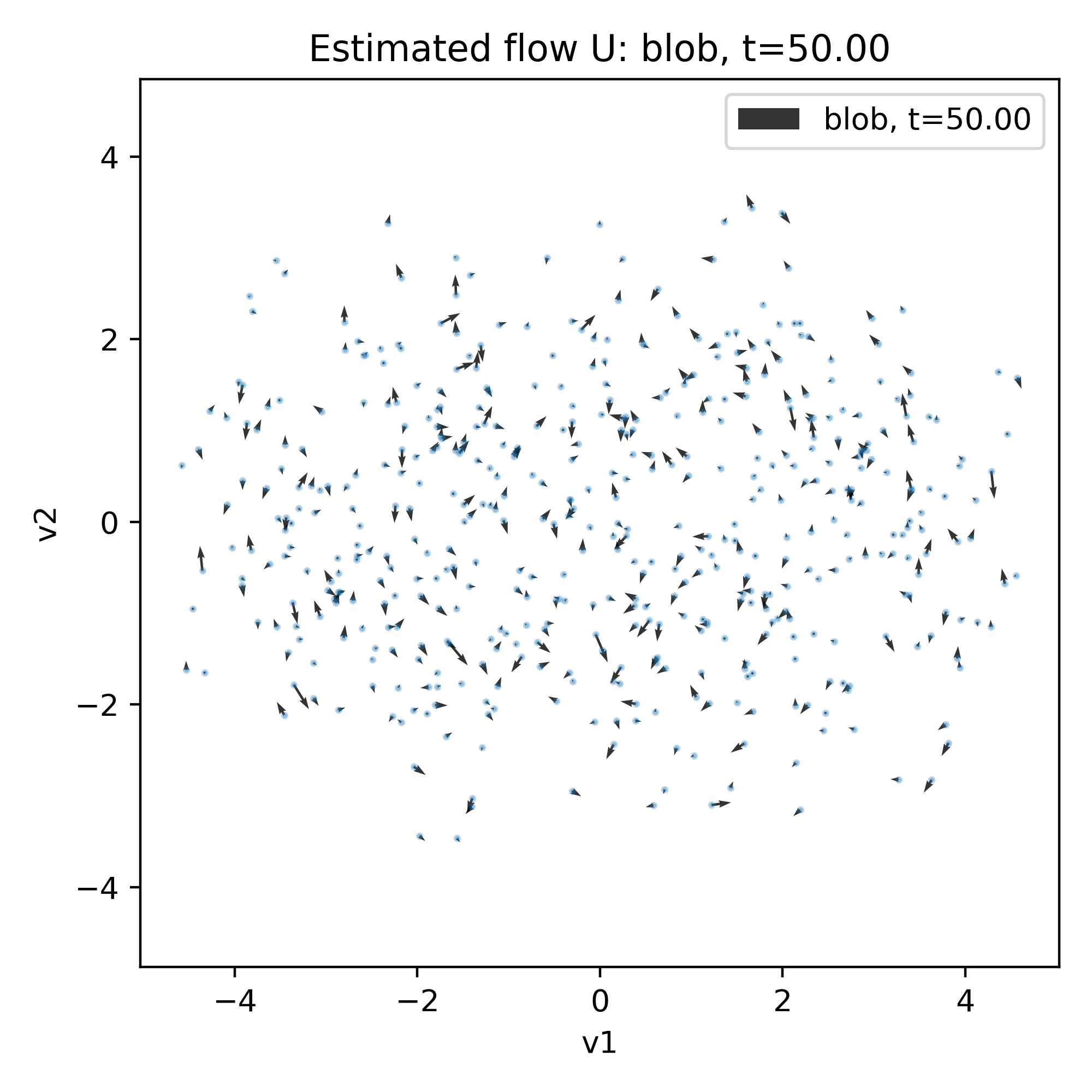}
\caption{Blob, $t = 50$}
\end{subfigure}
\begin{subfigure}{0.31\linewidth}
\includegraphics[width=\linewidth]{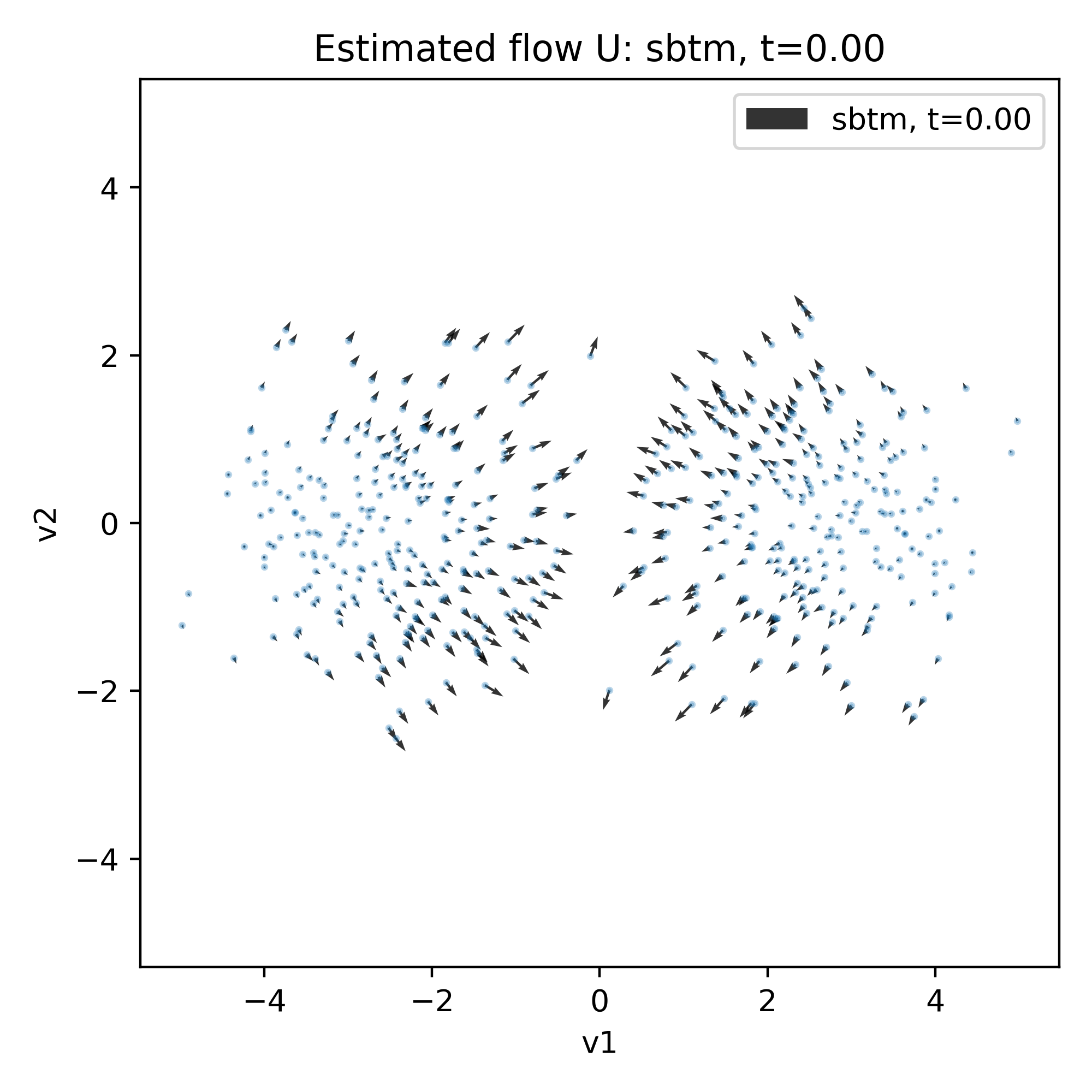}
\caption{SBTM, $t = 0$}
\end{subfigure}
\hfill
\begin{subfigure}{0.31\linewidth}
\includegraphics[width=\linewidth]{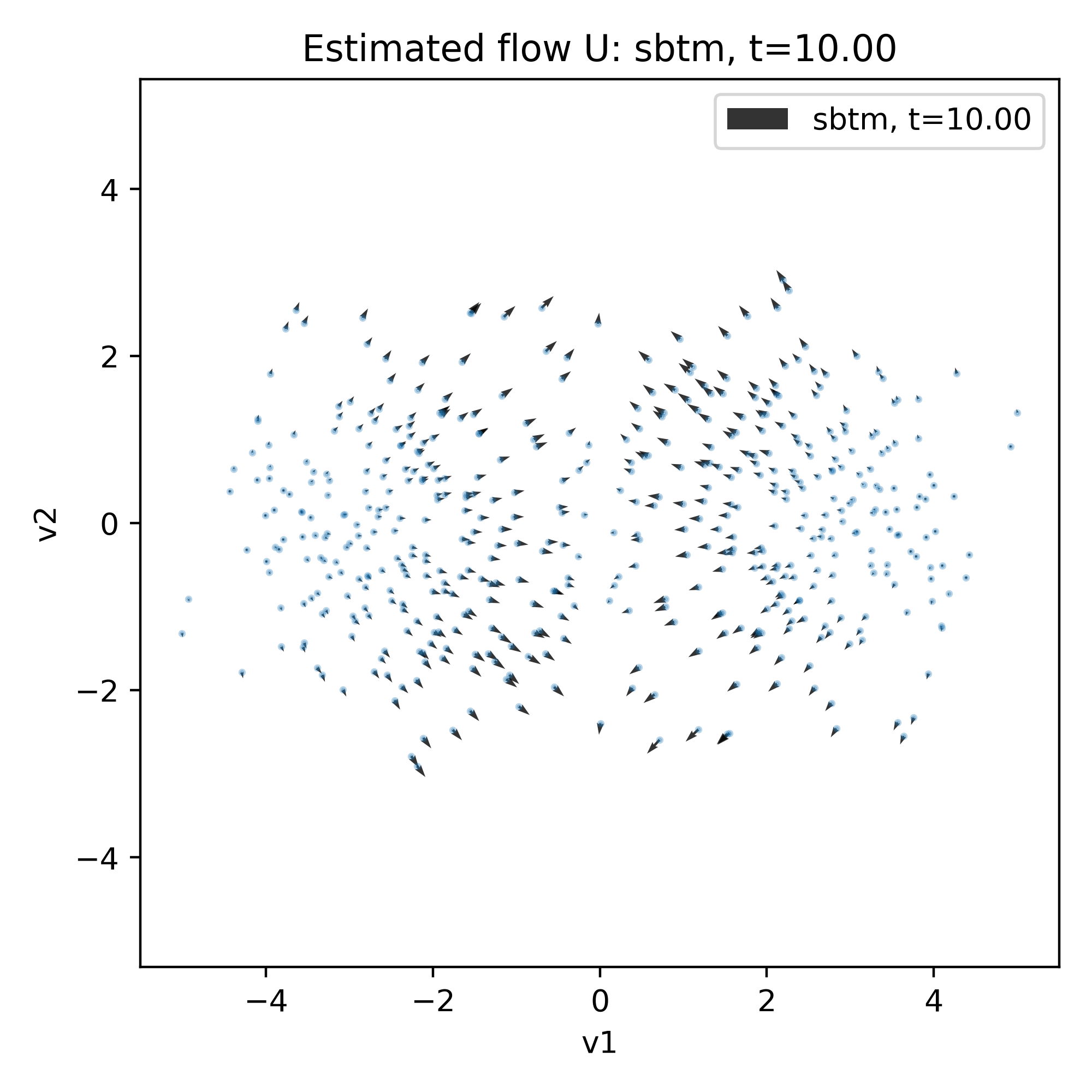}
\caption{SBTM, $t = 10$}
\end{subfigure}
\hfill
\begin{subfigure}{0.31\linewidth}
\includegraphics[width=\linewidth]{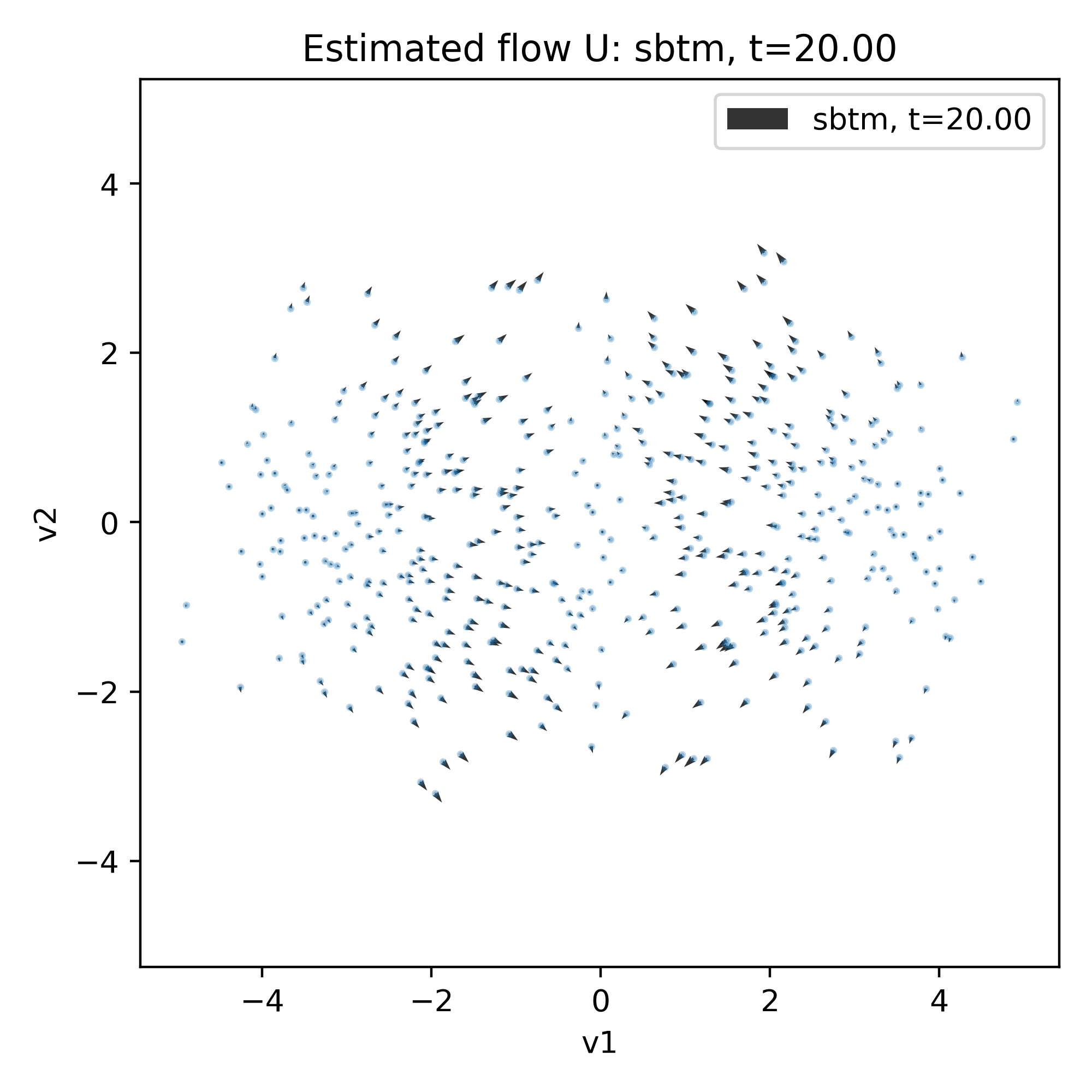}
\caption{SBTM, $t = 20$}
\end{subfigure}
\begin{subfigure}{0.31\linewidth}
\includegraphics[width=\linewidth]{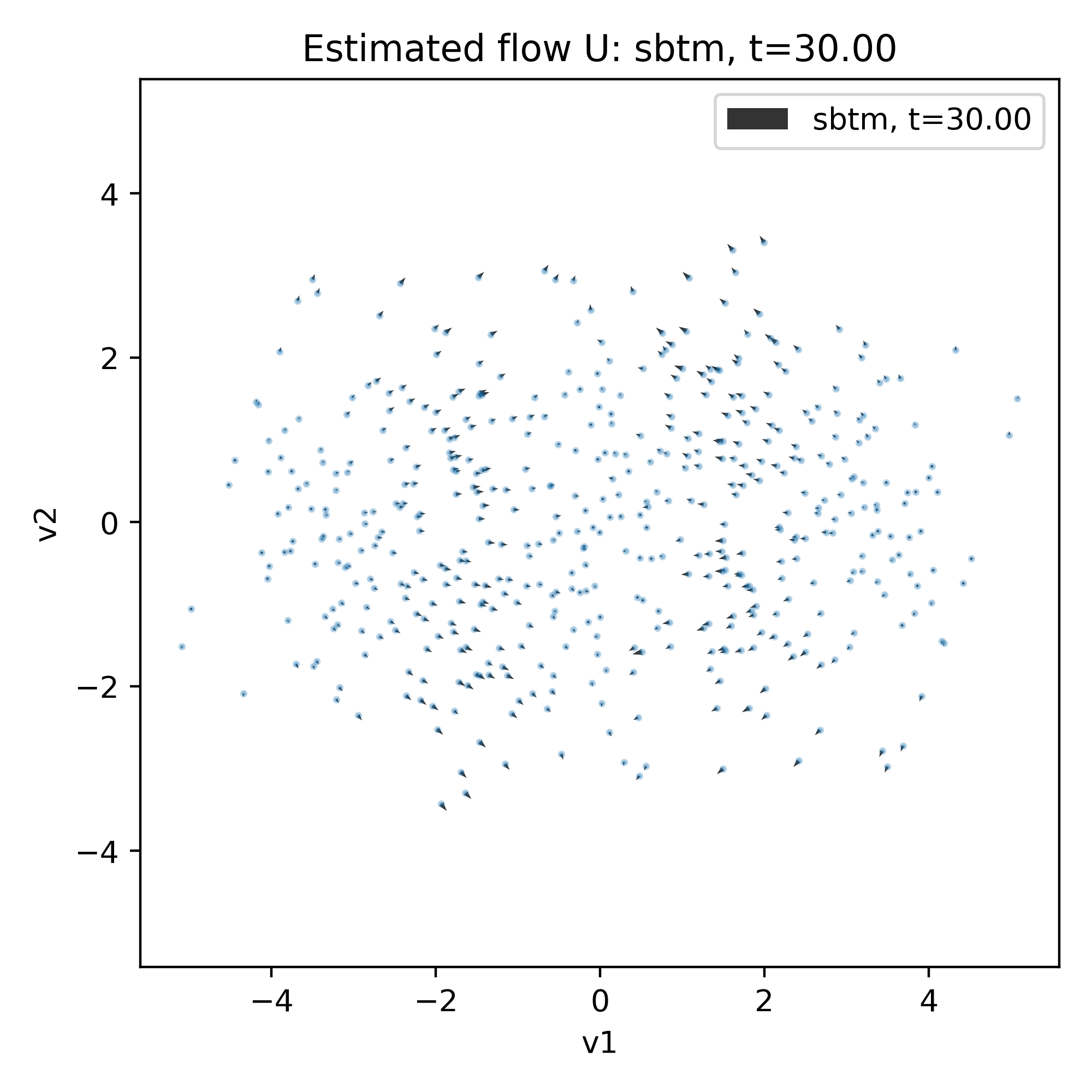}
\caption{SBTM, $t = 30$}
\end{subfigure}
\hfill
\begin{subfigure}{0.31\linewidth}
\includegraphics[width=\linewidth]{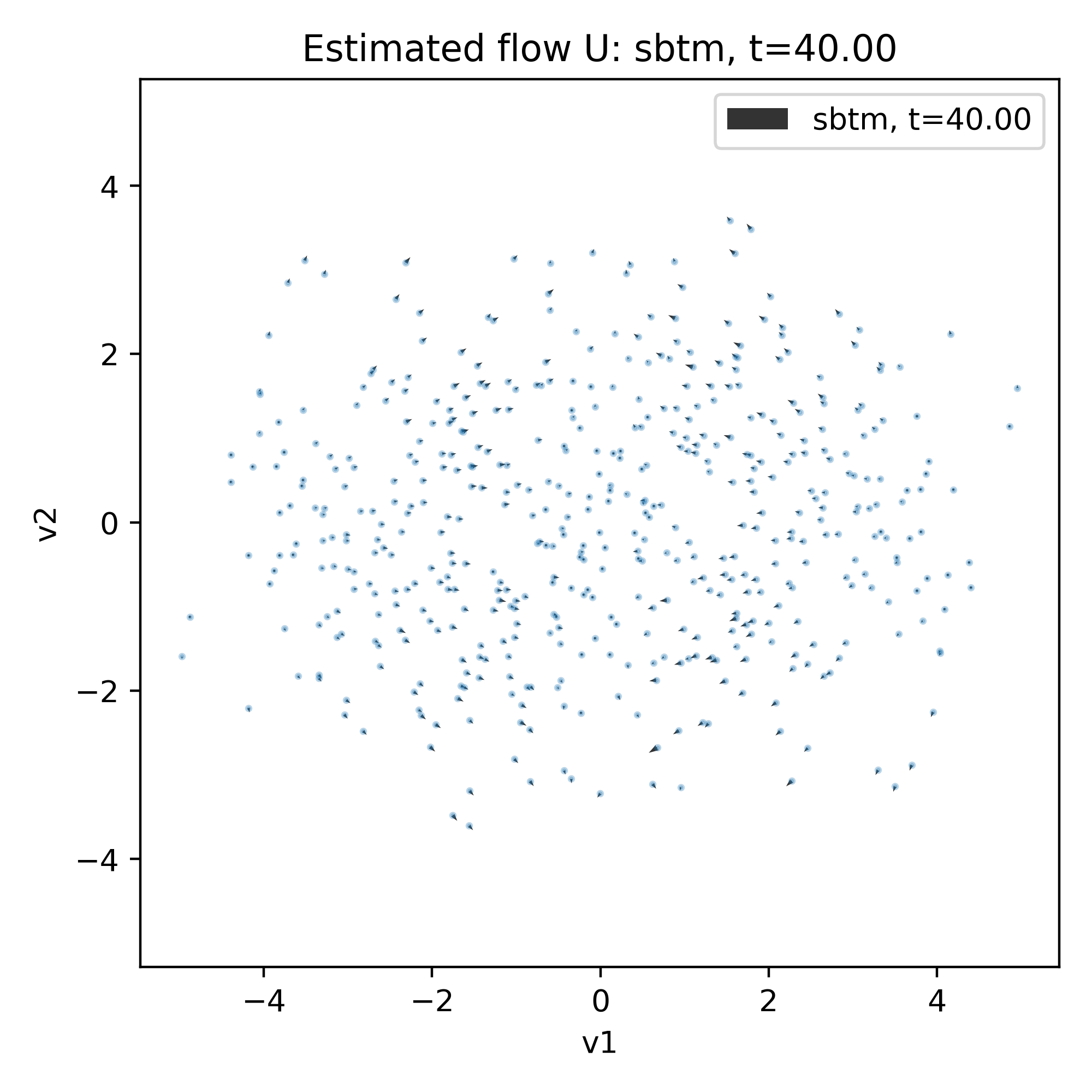}
\caption{SBTM, $t = 40$}
\end{subfigure}
\hfill
\begin{subfigure}{0.31\linewidth}
\includegraphics[width=\linewidth]{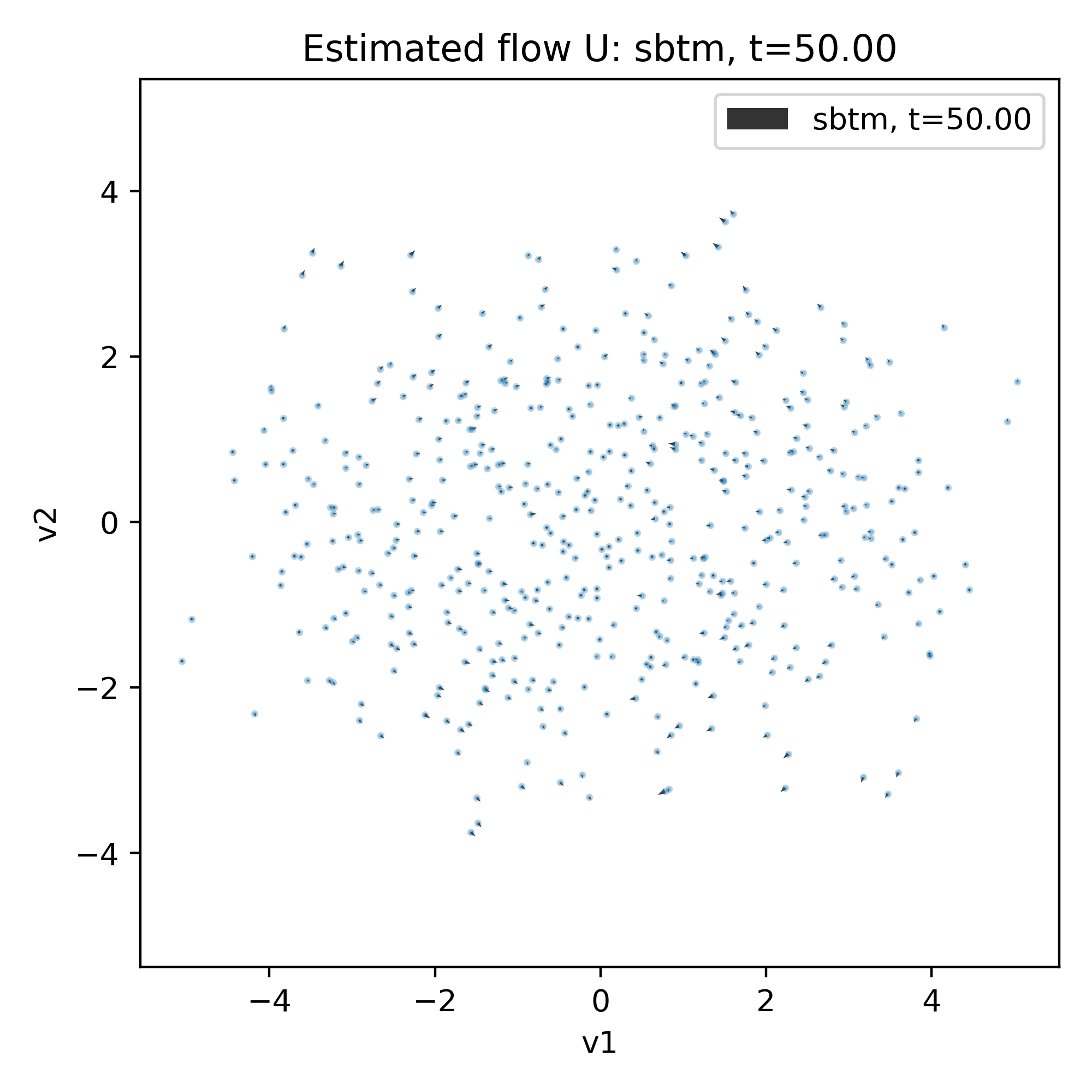}
\caption{SBTM, $t = 50$}
\end{subfigure}
\caption{Two-stream instability: collision force at $\nu = 0.24$, $n = 5 \times 10^5$.  Top two rows: blob (noisy, incoherent fields that worsen as fine structure develops).  Bottom two rows: SBTM (smooth, coherent fields at all times).}
\label{fig:twostream_flow}
\end{figure}

\FloatBarrier

\subsection{Weibel Instability}
\label{sec:weibel}

\textbf{Setup.}  An anisotropic bi-Maxwellian with a small magnetic perturbation:
\begin{equation}
\label{eq:weibel_ic}
f_0(x, v) \propto \exp\!\left(-\frac{v_1^2}{\beta}\right) \left[\exp\!\left(-\frac{(v_2 - c)^2}{\beta}\right) + \exp\!\left(-\frac{(v_2 + c)^2}{\beta}\right)\right] \prod_{j \geq 3} \exp\!\left(-\frac{v_j^2}{\beta}\right),
\end{equation}
with $c = 0.3$, $\beta = 0.01$, $B_3(0, x) = \alpha_B \sin(kx)$, $\alpha_B = 10^{-3}$, $k = 1/5$, domain $L = 2\pi/k = 10\pi$, $M = 100$, $\Delta t = 0.1$, $d_v = 3$, $n = 10^6$, $t_{\mathrm{final}} = 125$, and $K = 100$ ISM steps per time step.  We sweep $\nu \in \{10^{-4}, 2 \times 10^{-4}, 4 \times 10^{-4}, 8 \times 10^{-4}\}$. 

\textbf{Velocity distribution and equilibration.}  By Theorem~\ref{thm:equilibrium}, the collision operator drives the distribution toward a Maxwellian $f_\infty \propto \exp(-|v|^2/(2T_\infty))$, where $T_\infty=0.035$ is determined by total energy conservation. Figure~\ref{fig:weibel_v2} shows the $v_2$-marginal density at $\nu = 8 \times 10^{-4}$ on both logarithmic and linear scales.  SBTM converges to the expected Maxwellian with smooth Gaussian tails.  The blob method produces sharp, unphysical cutoffs in the tails at $v_2 \approx \pm 0.55$: the KDE score estimate truncates the distribution rather than allowing the smooth tails predicted by kinetic theory.  On a linear scale both methods appear similar, but the blob method produces a slightly broader distribution at late times; the log-scale version reveals the sharp tail cutoff. Table~\ref{tab:weibel_thermalization} quantifies equilibration by the $L^2$ distance of the $v_2$-marginal to $\mathcal{N}(0, T_\infty)$ at $t = t_{\mathrm{final}}$.  SBTM is $1.2$--$3.5\times$ closer to this Maxwellian across all collision frequencies.

\begin{table}[h]
\centering
\begin{tabular}{lcc}
\toprule
$\nu$ & Blob & SBTM \\
\midrule
$2 \times 10^{-4}$ & 0.325 & \textbf{0.265} \\
$4 \times 10^{-4}$ & 0.134 & \textbf{0.053} \\
$8 \times 10^{-4}$ & 0.070 & \textbf{0.020} \\
\bottomrule
\end{tabular}
\caption{Weibel instability: $L^2$ distance of the $v_2$-marginal to $\mathcal{N}(0, T_\infty)$ at $ t_{\mathrm{final}}=125$ ($n = 10^6$, $d_v = 3$).  Lower is better.  SBTM relaxes closer to the Maxwellian steady state across all $\nu$. }
\label{tab:weibel_thermalization}
\end{table}

\begin{figure}[t]
\centering
\begin{subfigure}{0.48\linewidth}
\includegraphics[width=\linewidth]{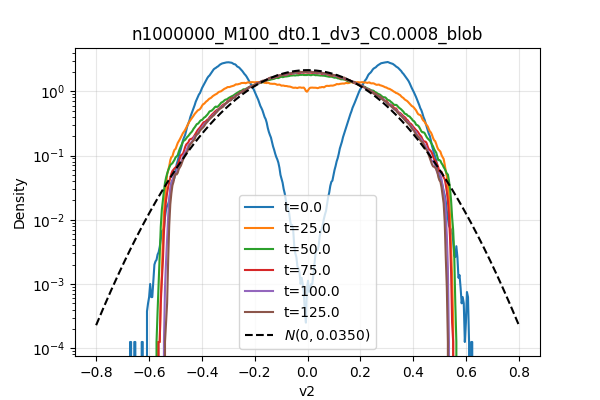}
\caption{Blob, log scale}
\end{subfigure}
\hfill
\begin{subfigure}{0.48\linewidth}
\includegraphics[width=\linewidth]{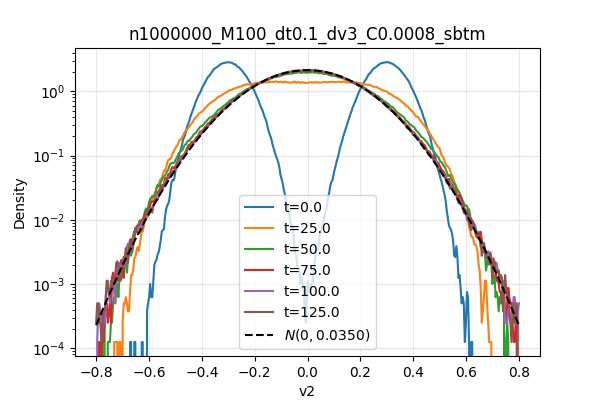}
\caption{SBTM, log scale}
\end{subfigure}
\begin{subfigure}{0.48\linewidth}
\includegraphics[width=\linewidth]{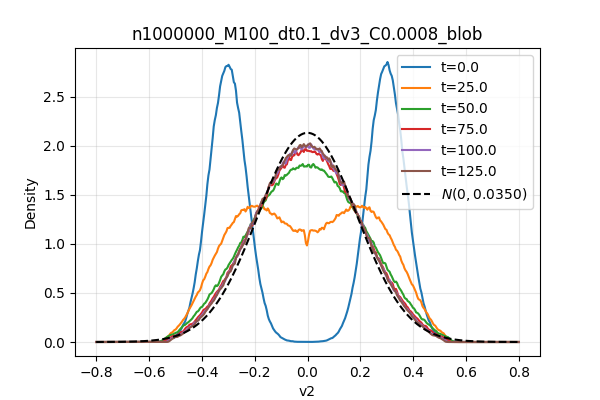}
\caption{Blob, linear scale}
\end{subfigure}
\hfill
\begin{subfigure}{0.48\linewidth}
\includegraphics[width=\linewidth]{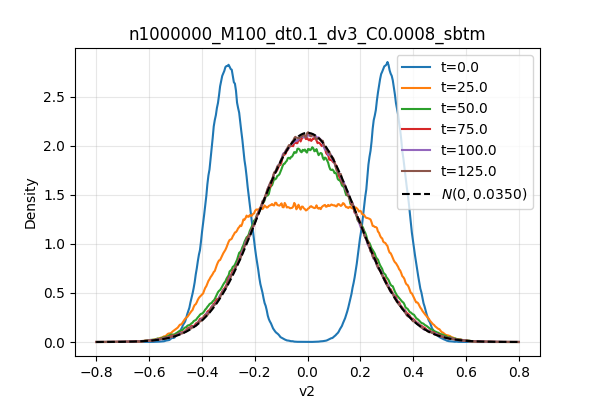}
\caption{SBTM, linear scale}
\end{subfigure}
\caption{Weibel instability: $v_2$-marginal density evolution at $\nu = 8 \times 10^{-4}$.  Top: log scale; bottom: linear scale.  The dashed black curve is a Gaussian/Maxwellian.  SBTM converges to it with smooth Gaussian tails, while the blob method exhibits sharp, unphysical cutoffs at $v_2 \approx \pm 0.55$.}
\label{fig:weibel_v2}
\end{figure}

\textbf{Phase space and velocity-space slices.}  Figure~\ref{fig:weibel_phase} shows the $(v_1, v_2)$-marginal density at several times for two collision frequencies, along with $(v_1, v_2)$ density slices at $x = 0$.  At low $\nu$, both methods are similar; at higher $\nu$, the blob method produces a non-Gaussian density while SBTM equilibrates correctly.  At $\nu = 0$ (no collisions), both methods produce identical density slices, confirming that the PIC framework is the same.  At $\nu = 10^{-4}$, the blob method already over-smooths the distribution, while SBTM retains the fine structures observed in the collisionless case.

\begin{figure}[h]
\centering
\begin{subfigure}{0.48\linewidth}
\includegraphics[width=\linewidth]{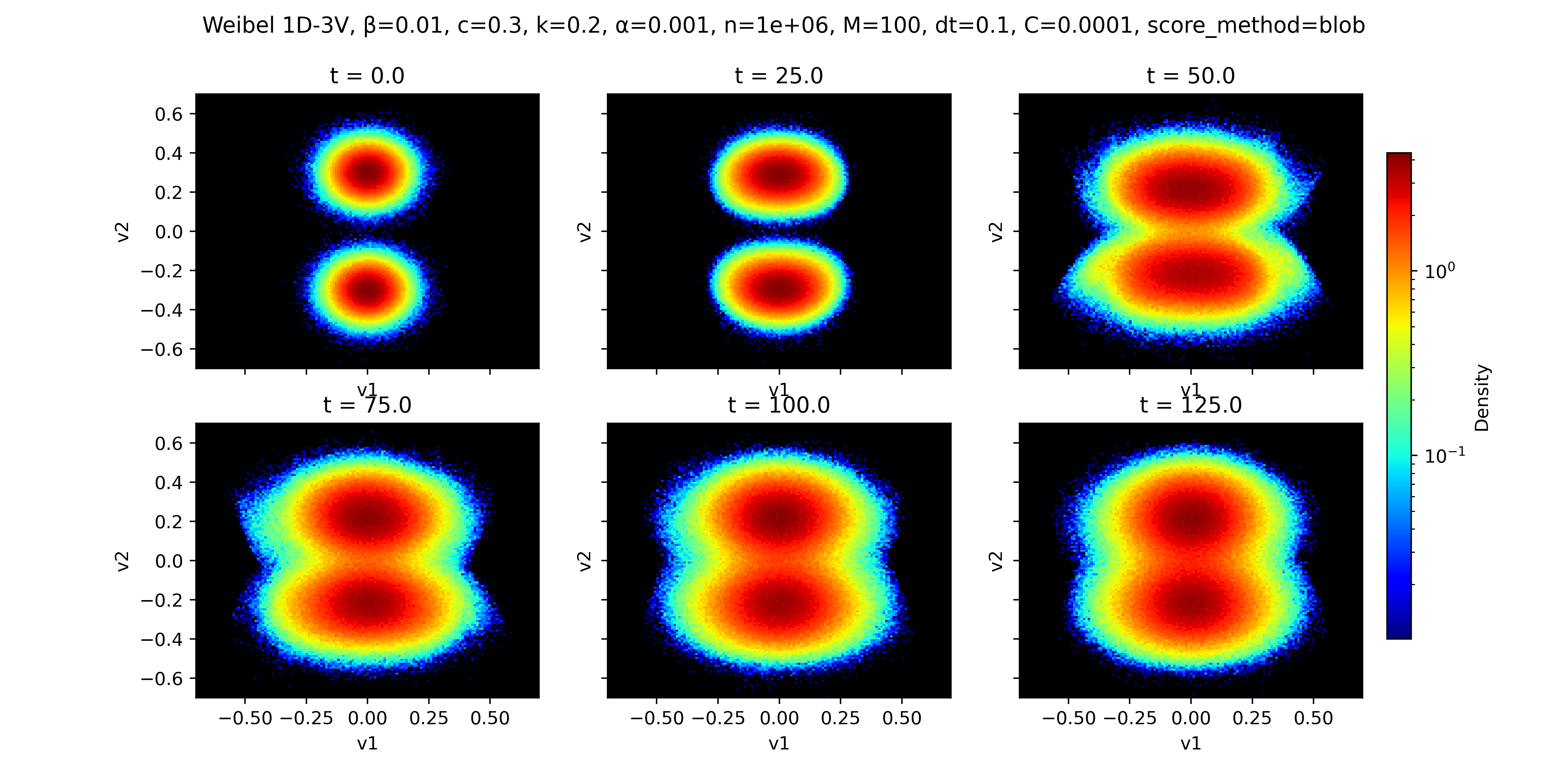}
\caption{Blob, $\nu = 10^{-4}$}
\end{subfigure}
\hfill
\begin{subfigure}{0.48\linewidth}
\includegraphics[width=\linewidth]{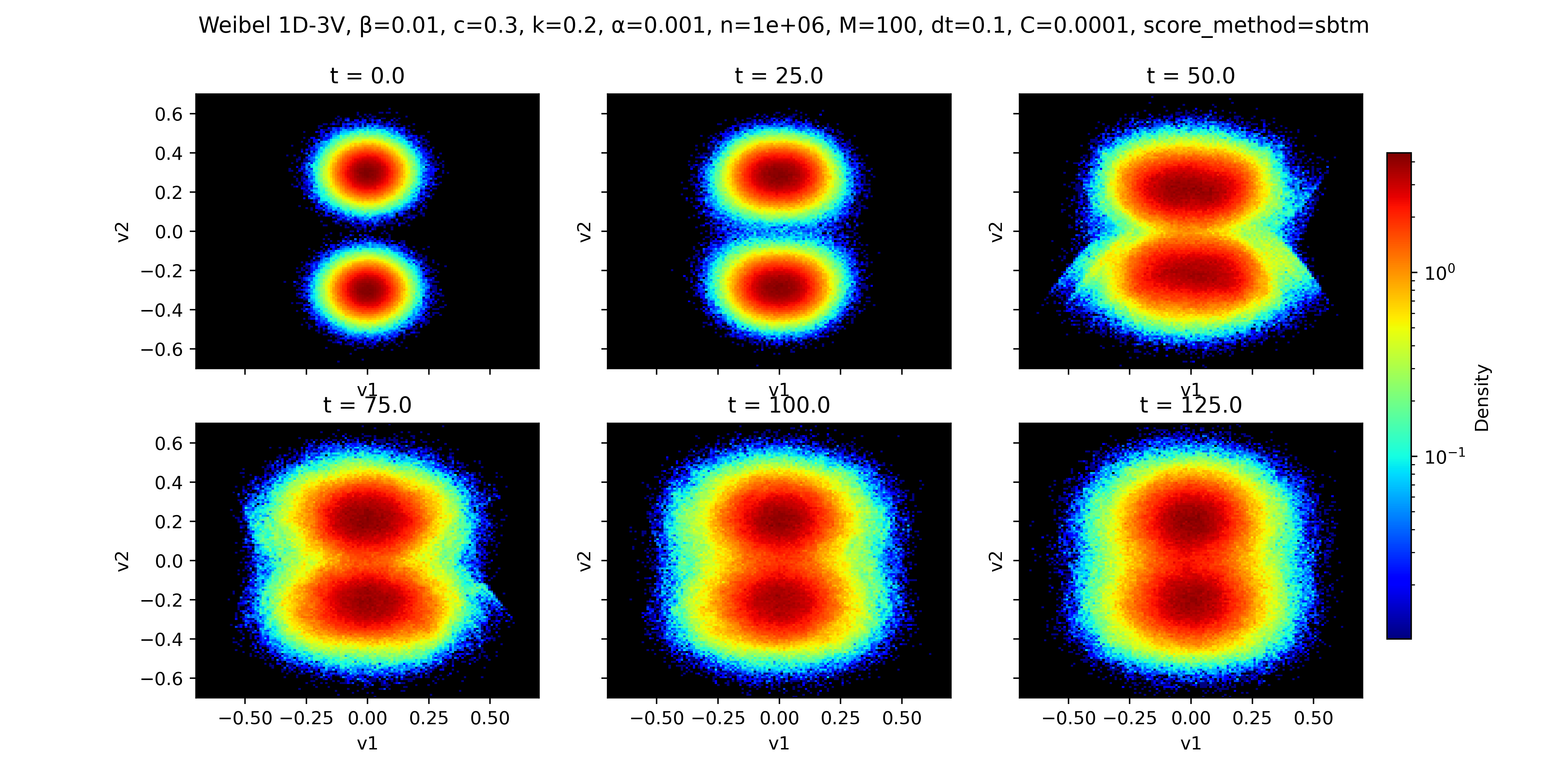}
\caption{SBTM, $\nu = 10^{-4}$}
\end{subfigure}
\begin{subfigure}{0.48\linewidth}
\includegraphics[width=\linewidth]{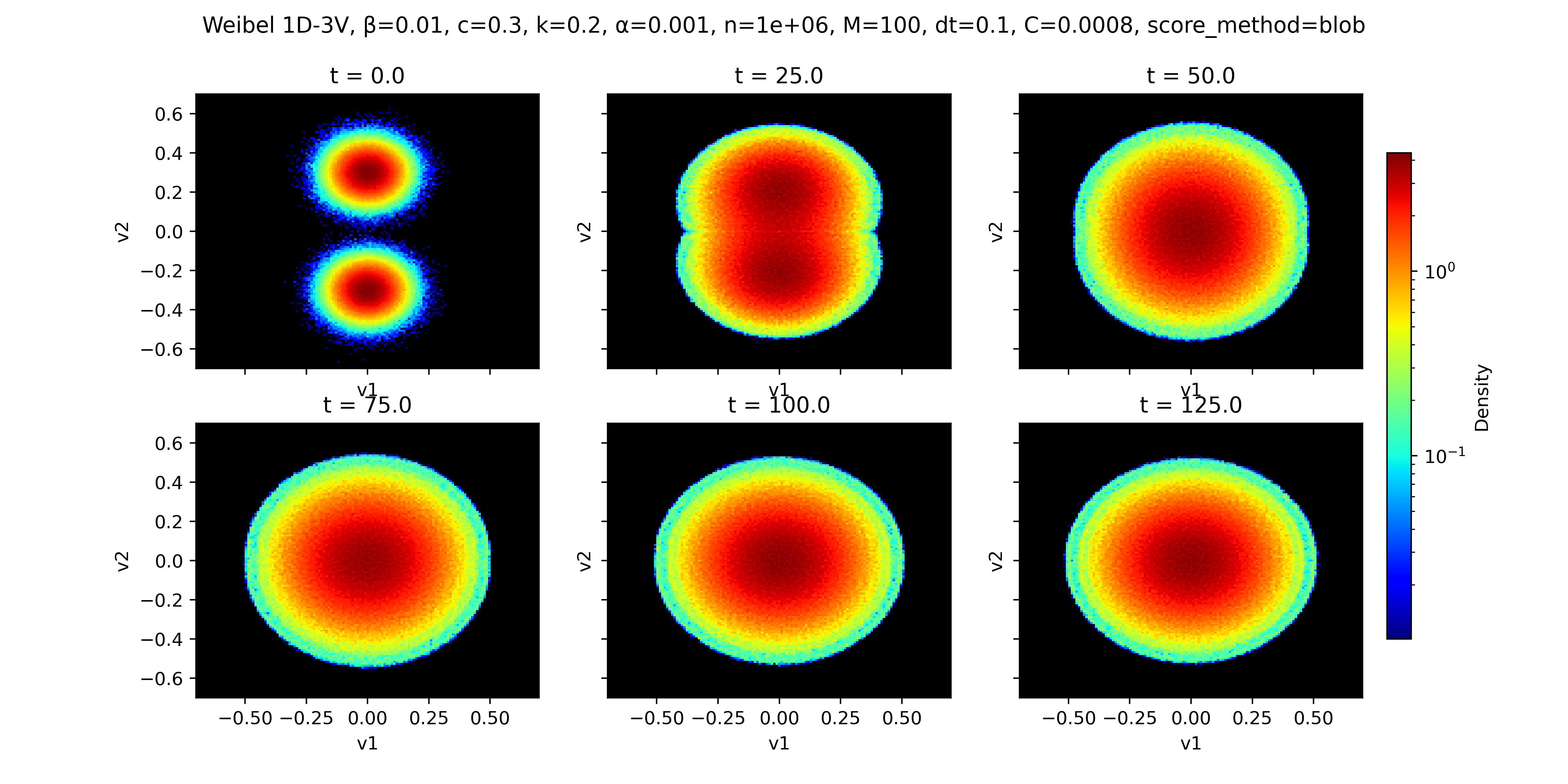}
\caption{Blob, $\nu = 8 \times 10^{-4}$}
\end{subfigure}
\hfill
\begin{subfigure}{0.48\linewidth}
\includegraphics[width=\linewidth]{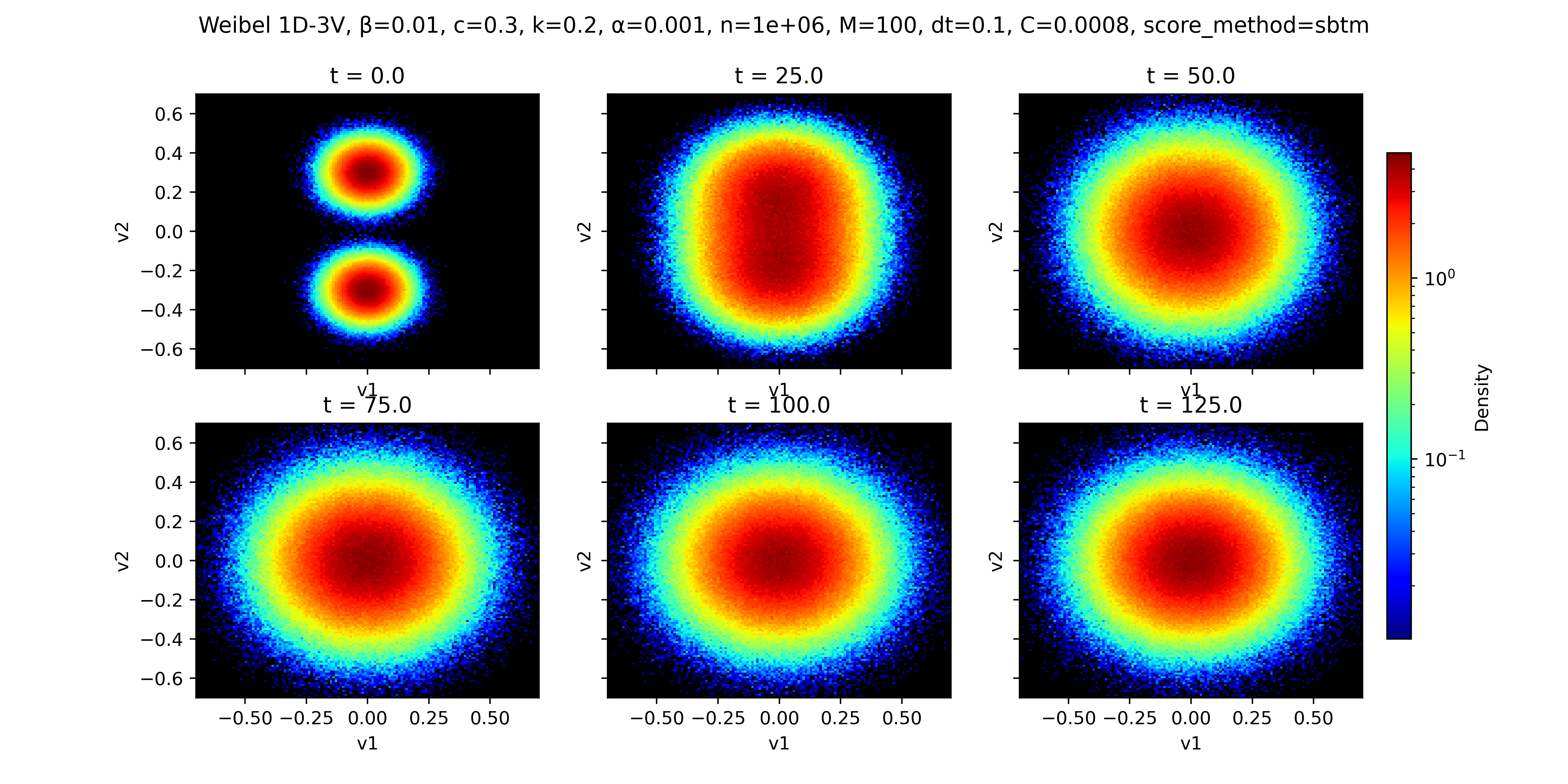}
\caption{SBTM, $\nu = 8 \times 10^{-4}$}
\end{subfigure}
\begin{subfigure}{0.48\linewidth}
\includegraphics[width=\linewidth]{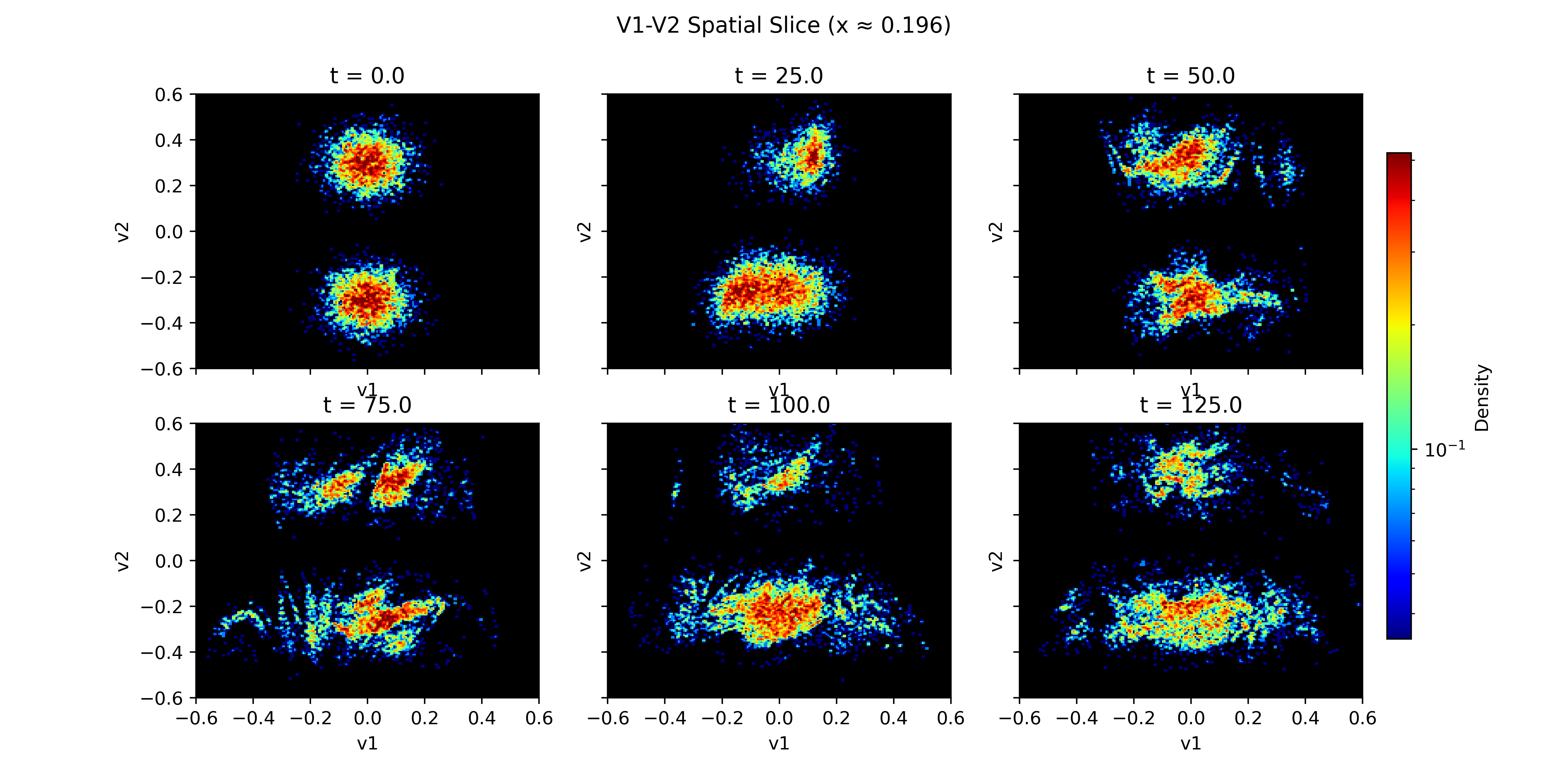}
\caption{Blob, $\nu = 0$}
\end{subfigure}
\hfill
\begin{subfigure}{0.48\linewidth}
\includegraphics[width=\linewidth]{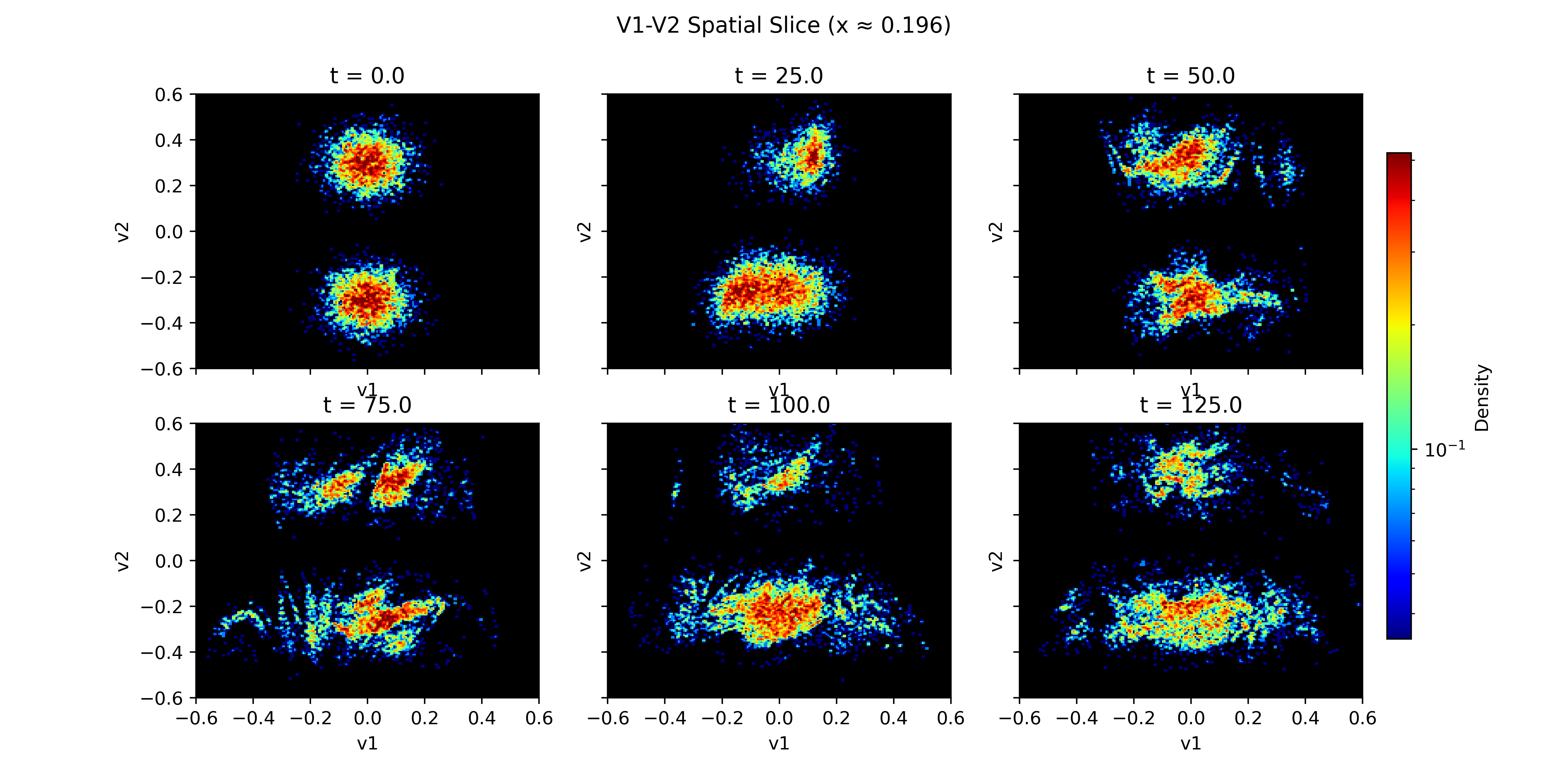}
\caption{SBTM, $\nu = 0$}
\end{subfigure}
\begin{subfigure}{0.48\linewidth}
\includegraphics[width=\linewidth]{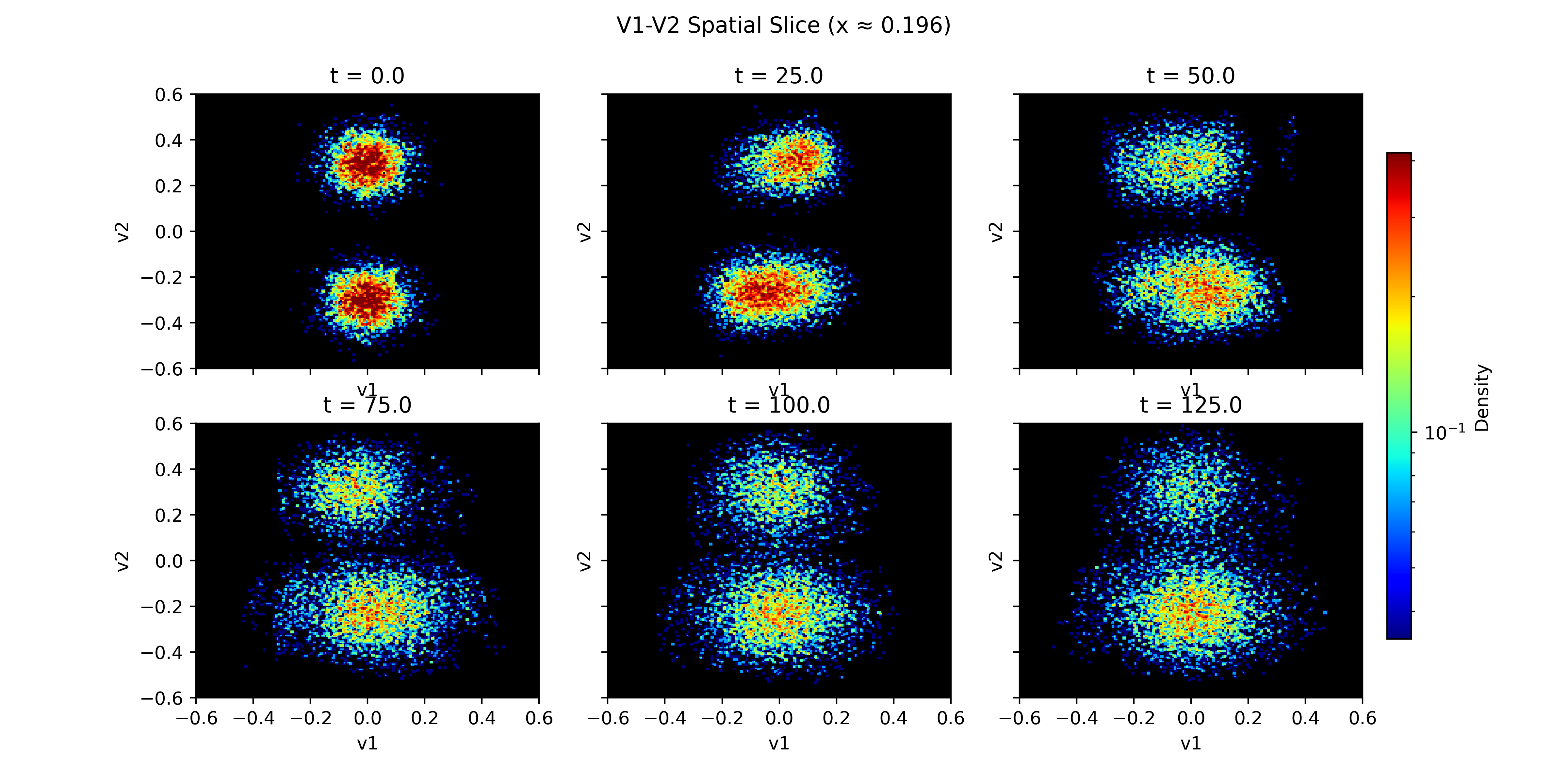}
\caption{Blob, $\nu = 10^{-4}$}
\end{subfigure}
\hfill
\begin{subfigure}{0.48\linewidth}
\includegraphics[width=\linewidth]{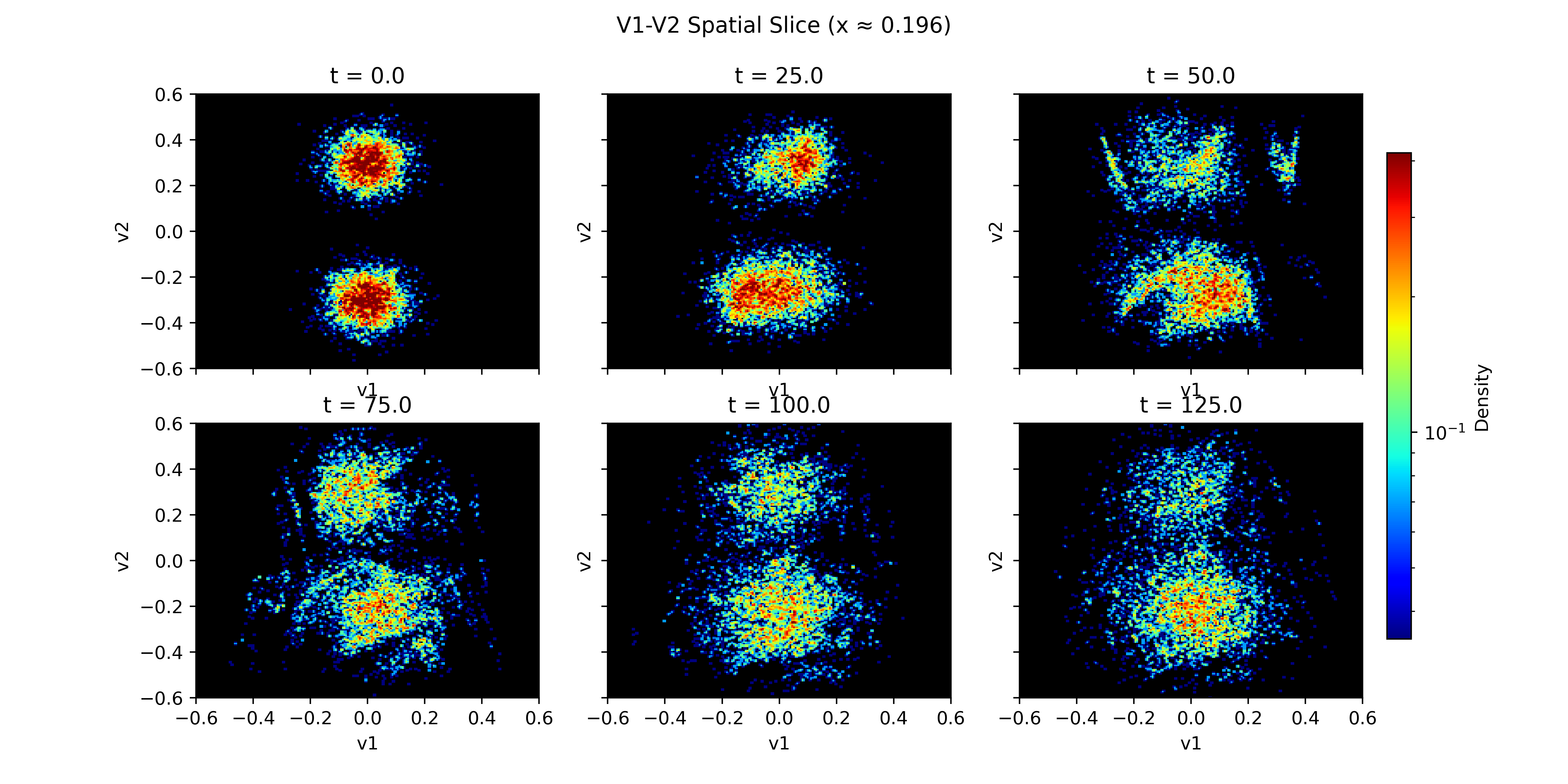}
\caption{SBTM, $\nu = 10^{-4}$}
\end{subfigure}
\caption{Weibel instability: $(v_1, v_2)$-marginal density.  (a)--(d): phase space snapshots at $\nu = 10^{-4}$ and $\nu = 8 \times 10^{-4}$.  (e)--(h): $(v_1, v_2)$ density slices at $x = 0$.  At $\nu = 0$ (e, f), both methods are identical.  At $\nu = 10^{-4}$ (g, h), the blob method over-smooths while SBTM preserves fine structures.}
\label{fig:weibel_phase}
\end{figure}

\textbf{Score estimation quality.}  Figure~\ref{fig:weibel_score} shows quiver plots of the estimated score in $(v_1, v_2)$ at the initial and final times.  At $t = 0$, SBTM achieves low MSE against the known analytic score; the blob method has substantially higher MSE.  At $t = t_{\mathrm{final}}$, SBTM scores are consistent with the expected linear score of a Gaussian ($s(x,v) \propto -v$), while the blob method produces noisy scores in the tails.

\begin{figure}[h]
\centering
\begin{subfigure}{0.48\linewidth}
\includegraphics[width=\linewidth]{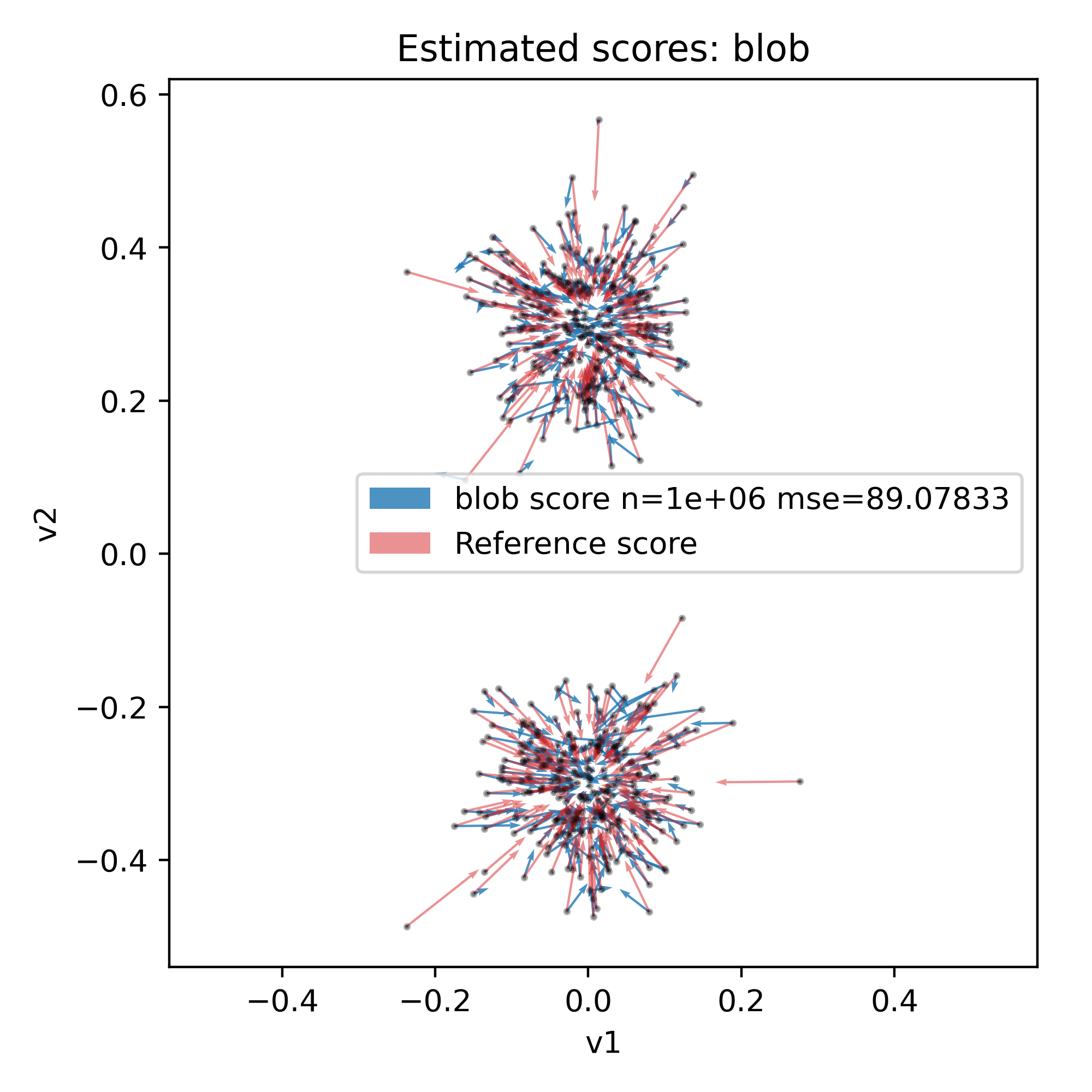}
\caption{Blob, $t = 0$}
\end{subfigure}
\hfill
\begin{subfigure}{0.48\linewidth}
\includegraphics[width=\linewidth]{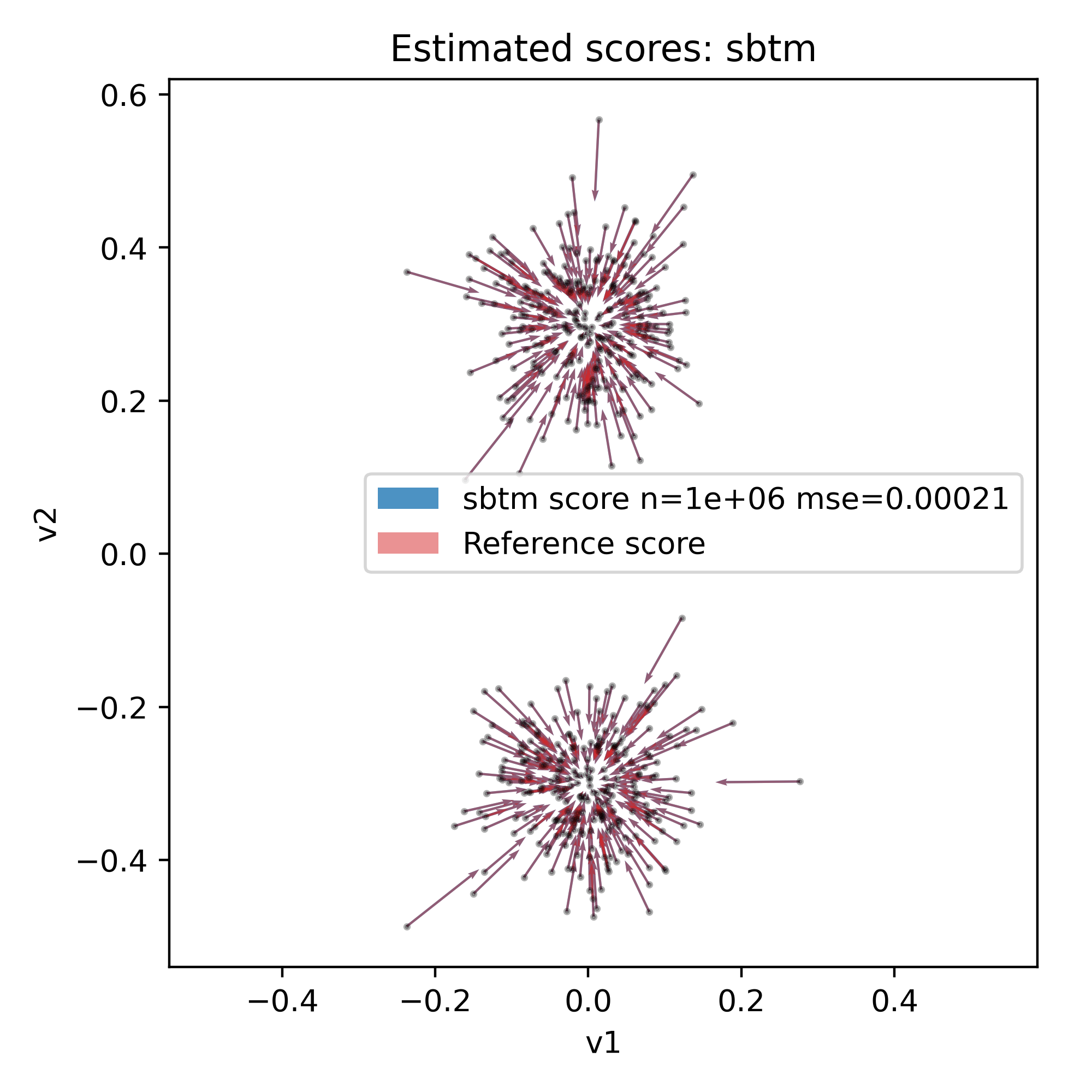}
\caption{SBTM, $t = 0$}
\end{subfigure}
\begin{subfigure}{0.48\linewidth}
\includegraphics[width=\linewidth]{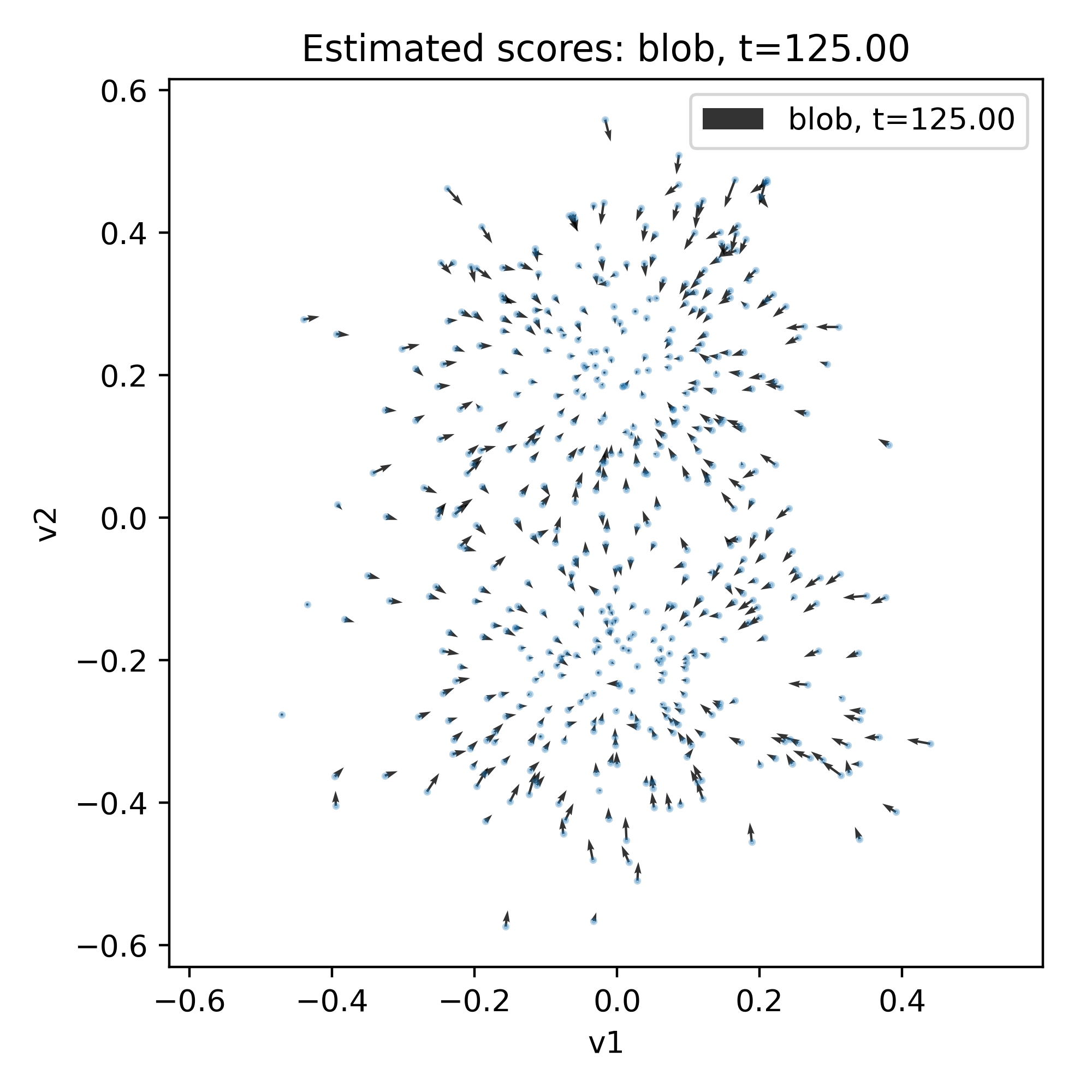}
\caption{Blob, $t = t_{\mathrm{final}}$}
\end{subfigure}
\hfill
\begin{subfigure}{0.48\linewidth}
\includegraphics[width=\linewidth]{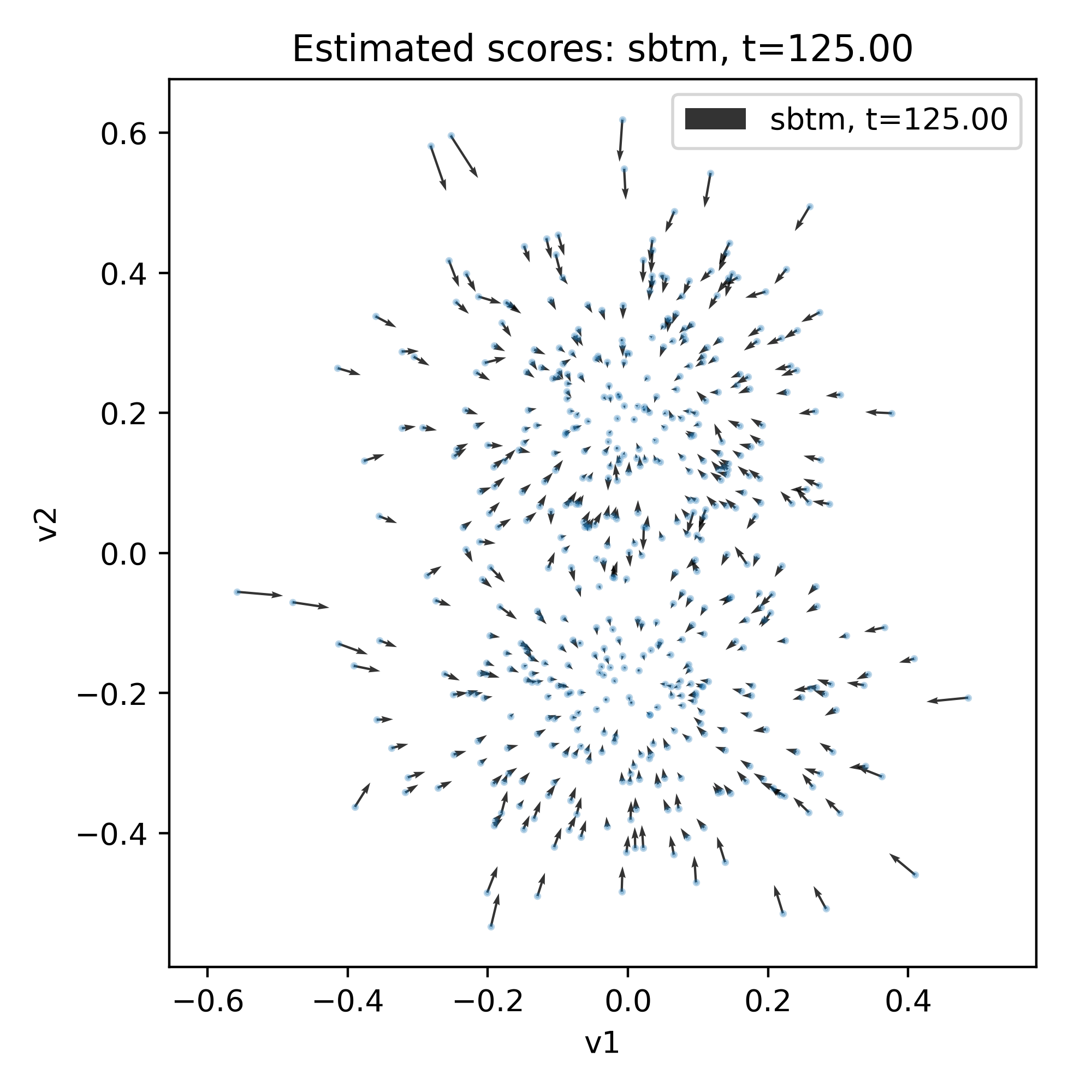}
\caption{SBTM, $t = t_{\mathrm{final}}$}
\end{subfigure}
\caption{Weibel instability: score quiver plots at $\nu = 10^{-4}$.  Top: initial time (true score in red; MSE in legend).  Bottom: final time (true score unavailable).  SBTM produces smooth, physically consistent scores; the blob method has high MSE at $t = 0$ and noisy scores at $t = t_{\mathrm{final}}$.}
\label{fig:weibel_score}
\end{figure}

\textbf{Estimated entropy production.}  Figure~\ref{fig:weibel_entropy} shows the estimated entropy production over time. SBTM correctly decays toward zero at all collision frequencies, with higher $\nu$ producing faster decay.  The blob method plateaus at nonzero values due to persistent score errors in low-density tail regions.  At fixed $\nu = 8\times 10^{-4}$, the blob plateau decreases with $n$, converging toward SBTM.

\begin{figure}[h]
\centering
\begin{subfigure}{0.48\linewidth}
\includegraphics[width=\linewidth]{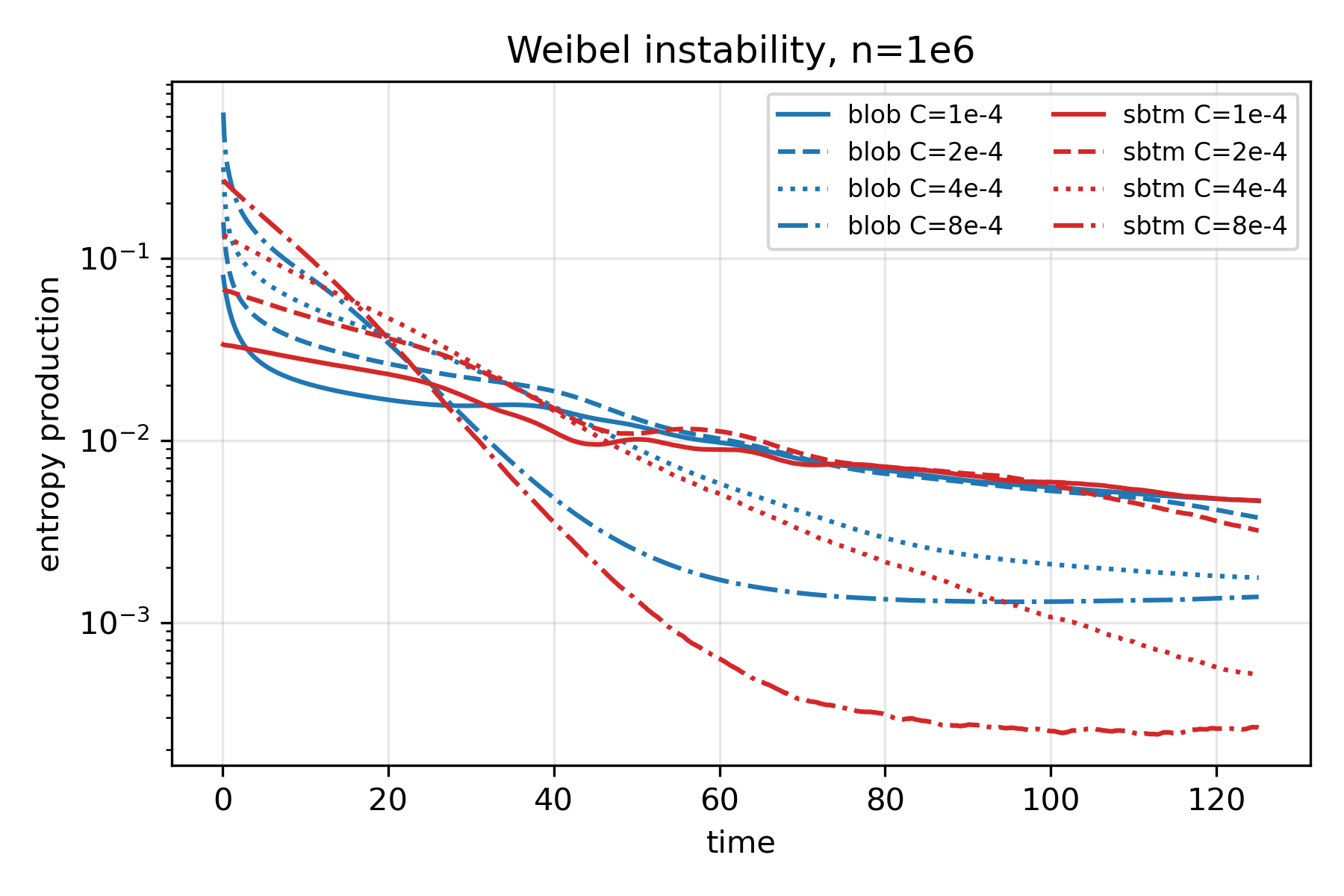}
\caption{Sweep over $\nu$ ($n = 10^6$)}
\end{subfigure}
\hfill
\begin{subfigure}{0.48\linewidth}
\includegraphics[width=\linewidth]{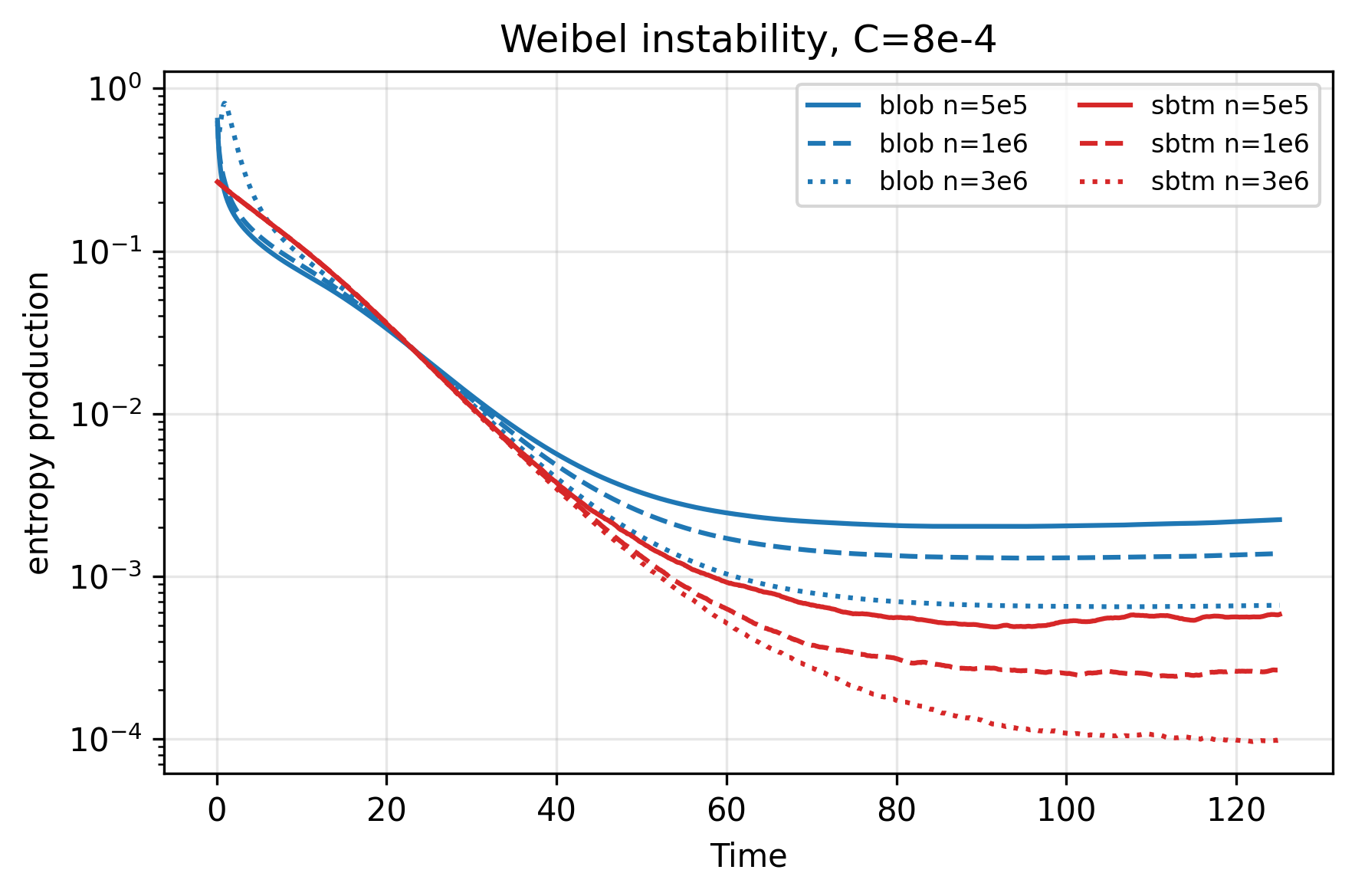}
\caption{Sweep over $n$ ($\nu = 8\times 10^{-4}$)}
\end{subfigure}
\caption{Weibel instability: estimated entropy production for blob (blue) and SBTM (red).  Left: across collision frequencies; SBTM correctly decays to zero while the blob method plateaus.  Right: across particle counts at $\nu = 8\times 10^{-4}$; the blob plateau decreases with $n$, converging toward SBTM.}
\label{fig:weibel_entropy}
\end{figure}

%

\textbf{Phase space sweep over $\nu$ and $d_v$.}  Figures~\ref{fig:weibel_sweep_dv2} and~\ref{fig:weibel_sweep_dv3} show $(v_1, v_2)$-marginal density snapshots for the Weibel instability across collision frequencies $\nu \in \{0, 10^{-4}, 2 \times 10^{-4}, 4 \times 10^{-4}, 8 \times 10^{-4}\}$ in $d_v = 2$ and $d_v = 3$ dimensions, respectively.  At $\nu = 0$, both methods are identical.  As $\nu$ increases, the blob method increasingly produces non-Gaussian densities, while SBTM equilibrates correctly.  The effect is more pronounced in $d_v = 3$, indicating better dimension scaling of SBTM.

\begin{figure}[h]
\centering
\begin{subfigure}{0.48\linewidth}
\includegraphics[width=\linewidth]{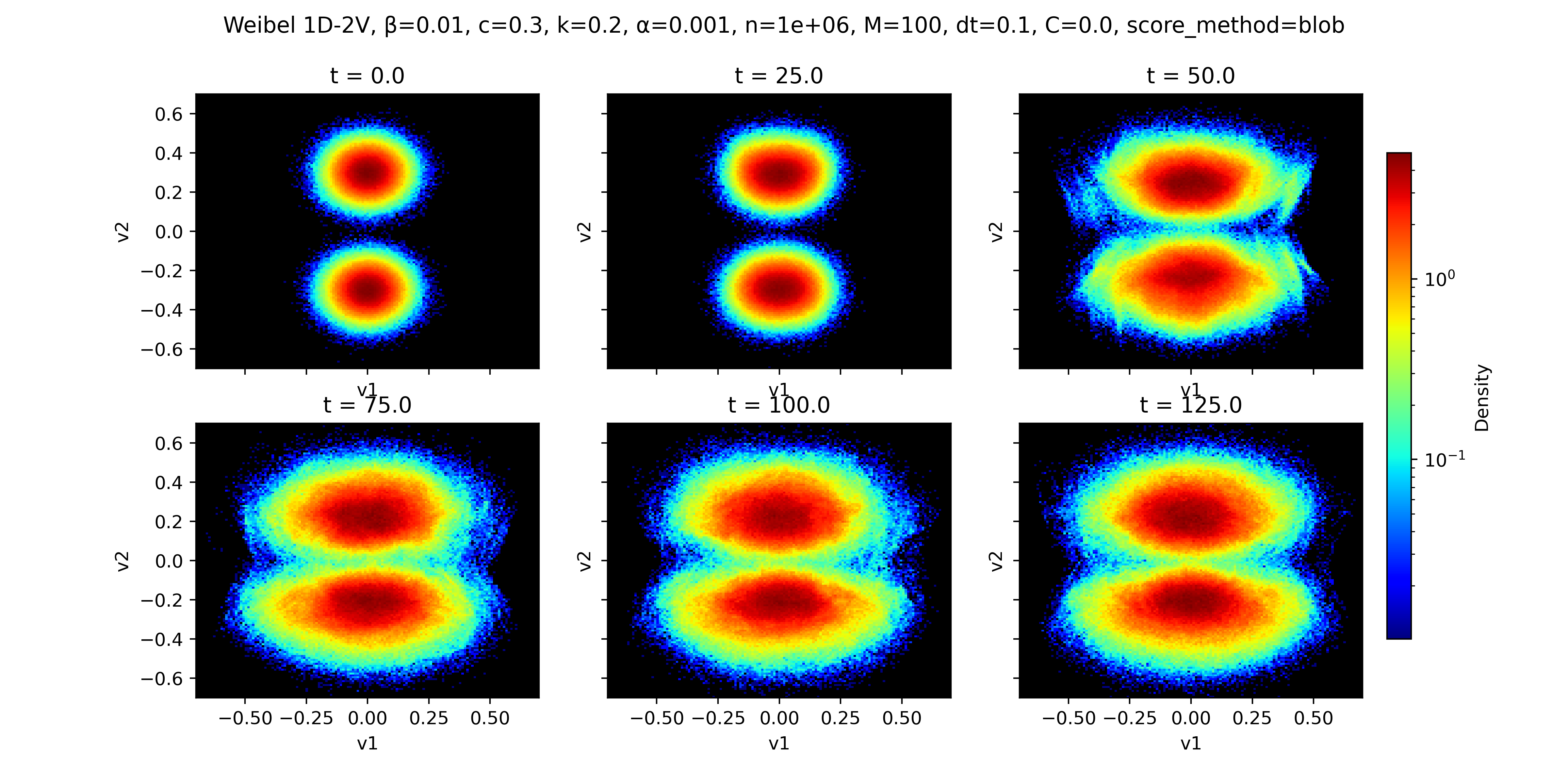}
\caption{Blob, $\nu = 0$}
\end{subfigure}
\hfill
\begin{subfigure}{0.48\linewidth}
\includegraphics[width=\linewidth]{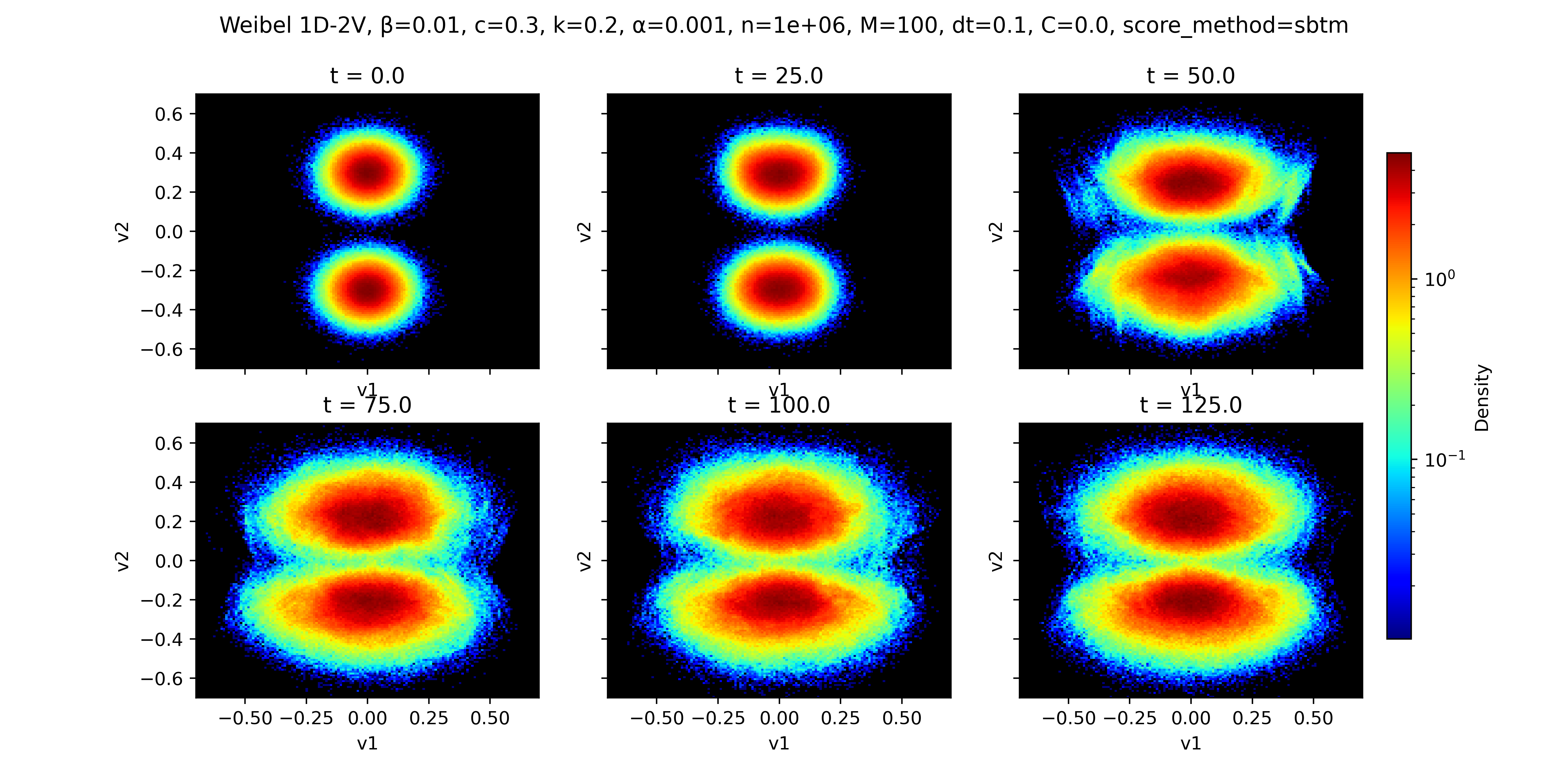}
\caption{SBTM, $\nu = 0$}
\end{subfigure}
\begin{subfigure}{0.48\linewidth}
\includegraphics[width=\linewidth]{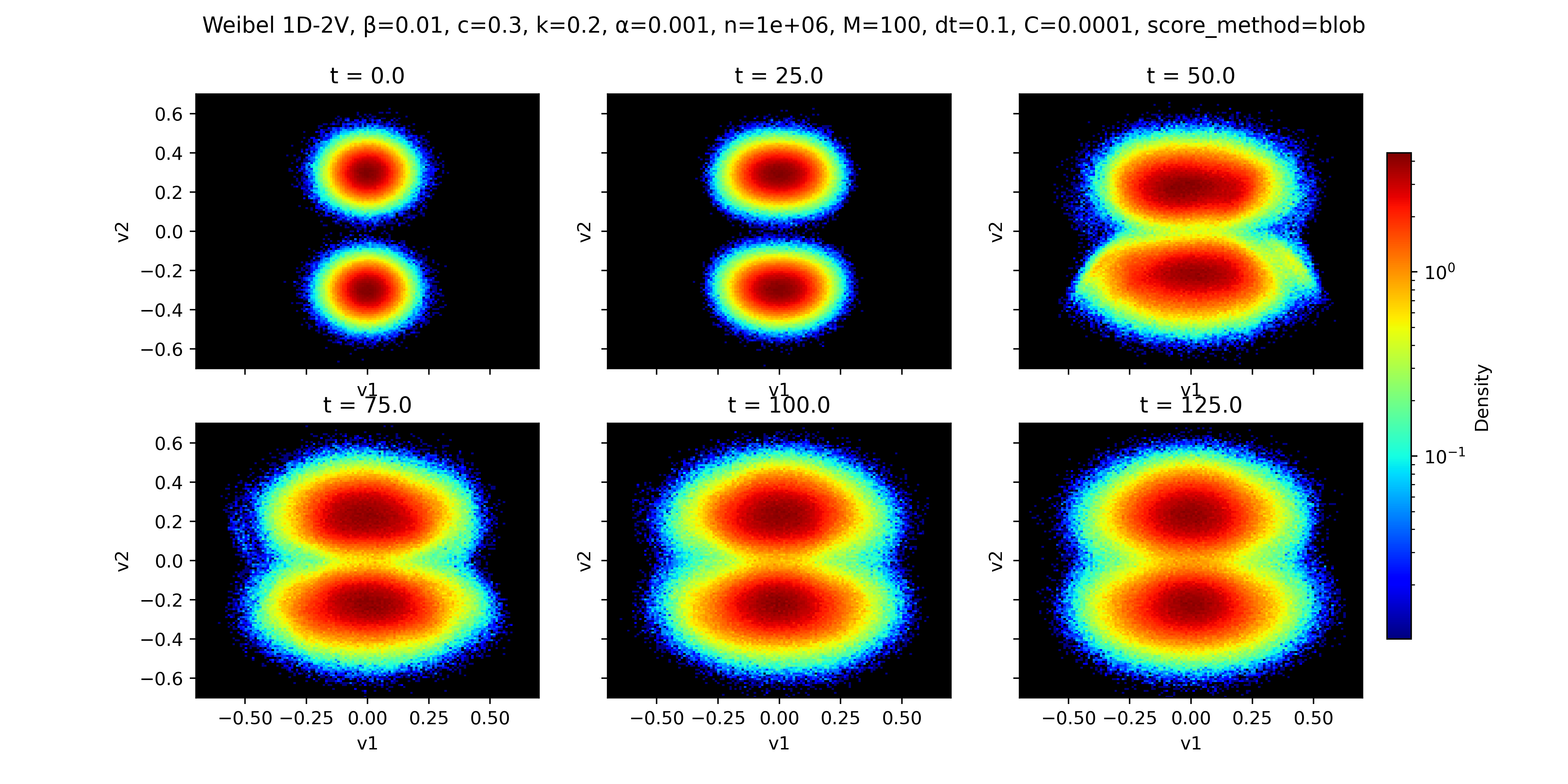}
\caption{Blob, $\nu = 10^{-4}$}
\end{subfigure}
\hfill
\begin{subfigure}{0.48\linewidth}
\includegraphics[width=\linewidth]{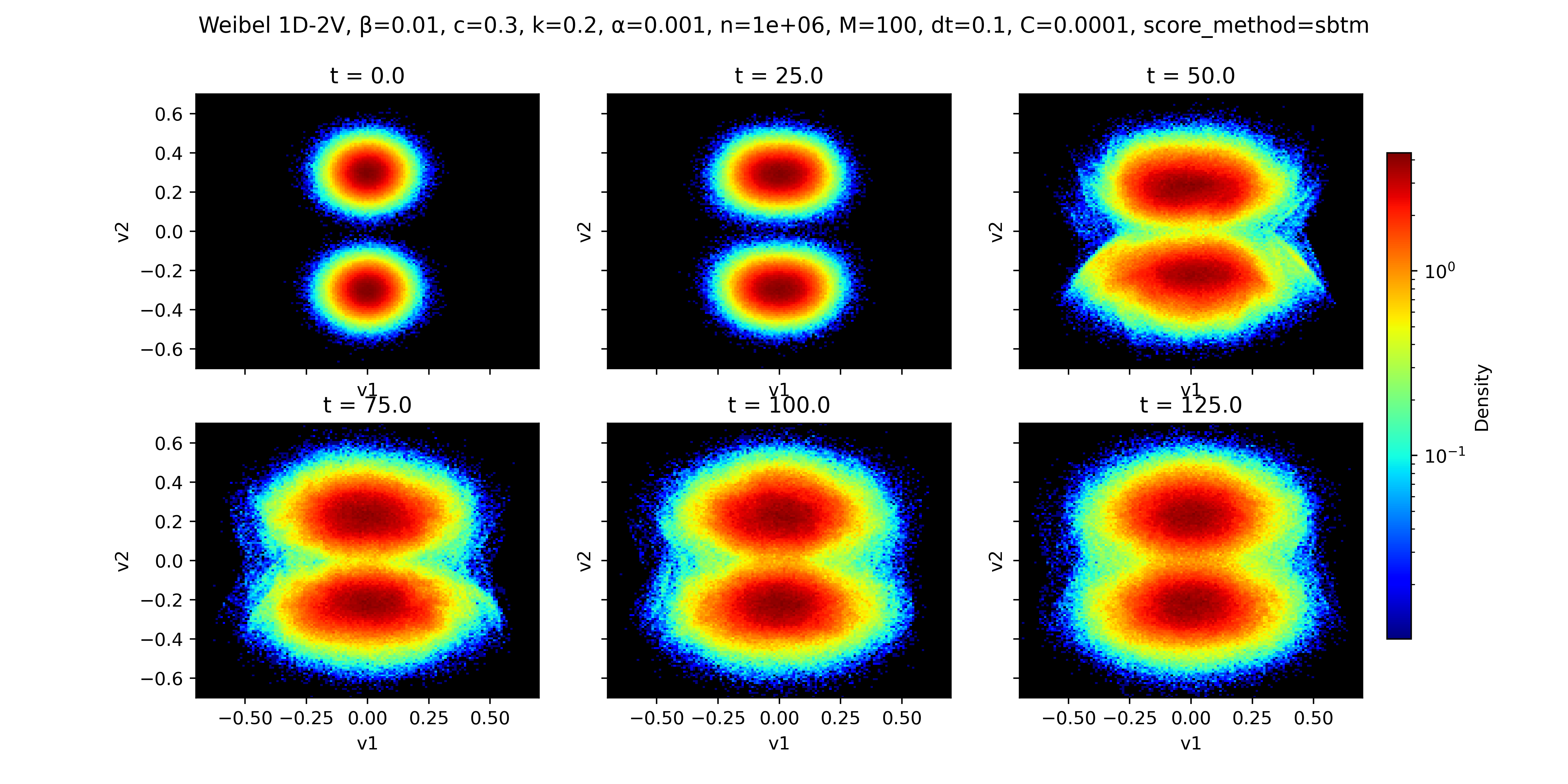}
\caption{SBTM, $\nu = 10^{-4}$}
\end{subfigure}
\begin{subfigure}{0.48\linewidth}
\includegraphics[width=\linewidth]{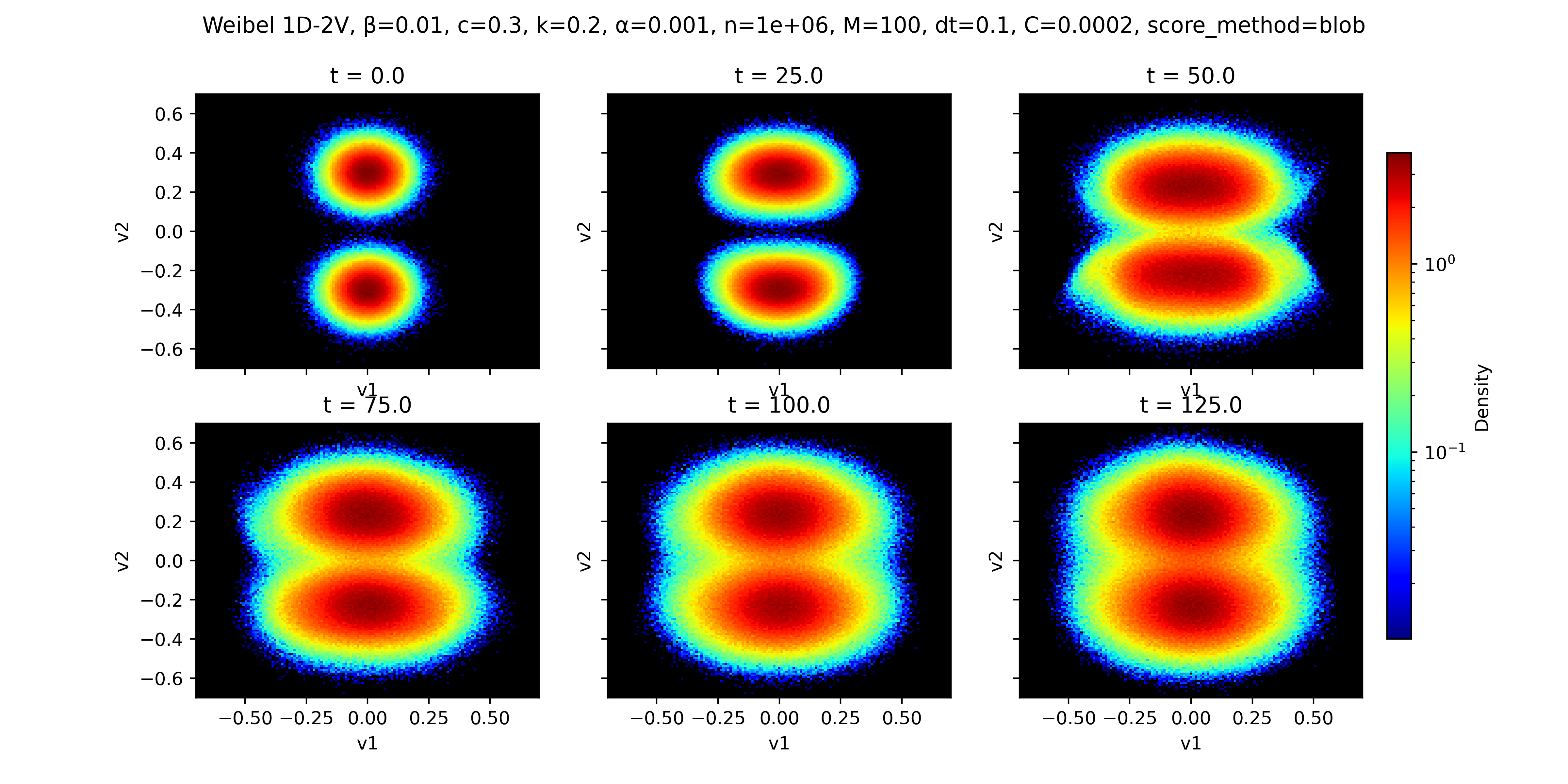}
\caption{Blob, $\nu = 2 \times 10^{-4}$}
\end{subfigure}
\hfill
\begin{subfigure}{0.48\linewidth}
\includegraphics[width=\linewidth]{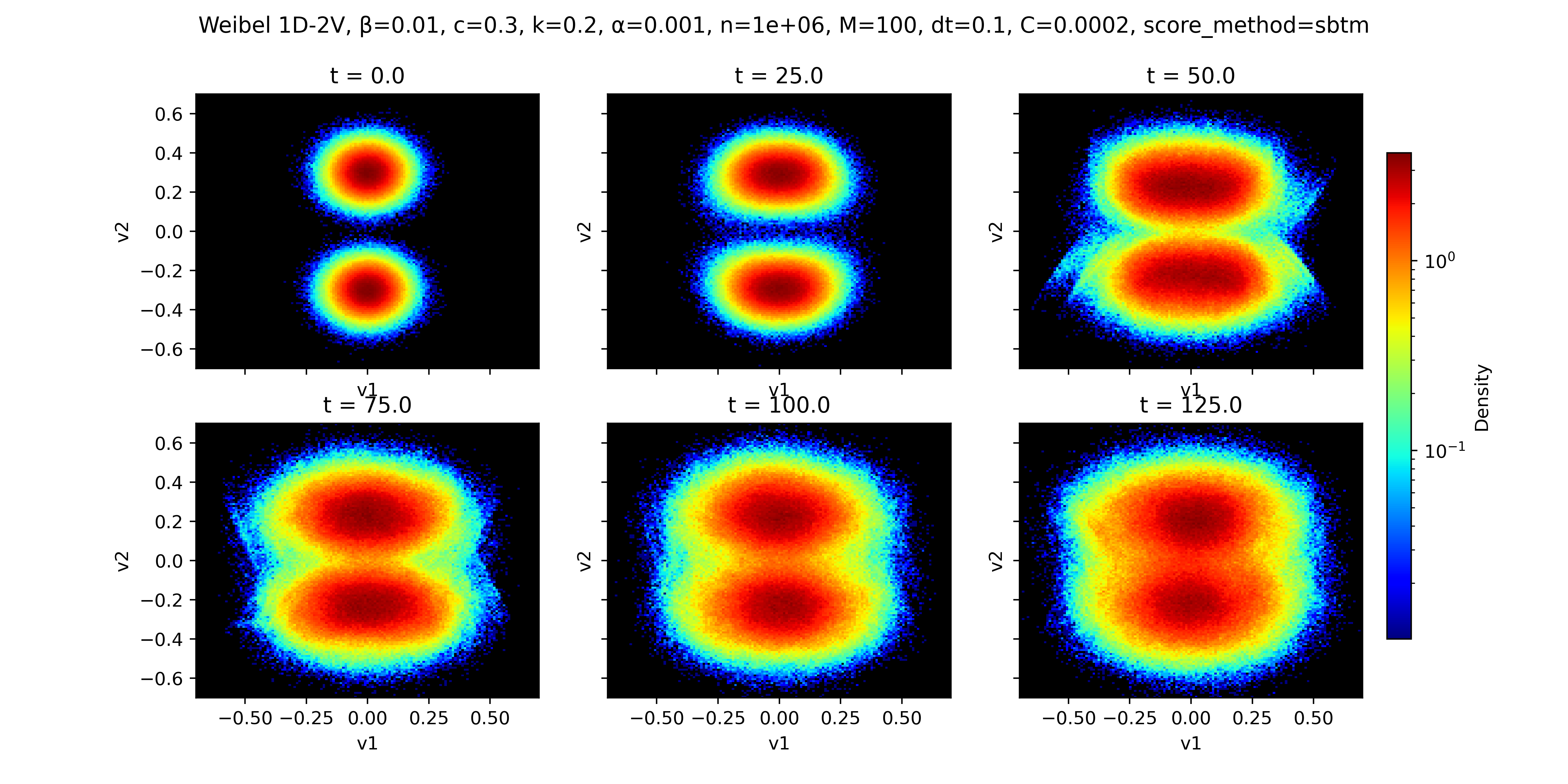}
\caption{SBTM, $\nu = 2 \times 10^{-4}$}
\end{subfigure}
\begin{subfigure}{0.48\linewidth}
\includegraphics[width=\linewidth]{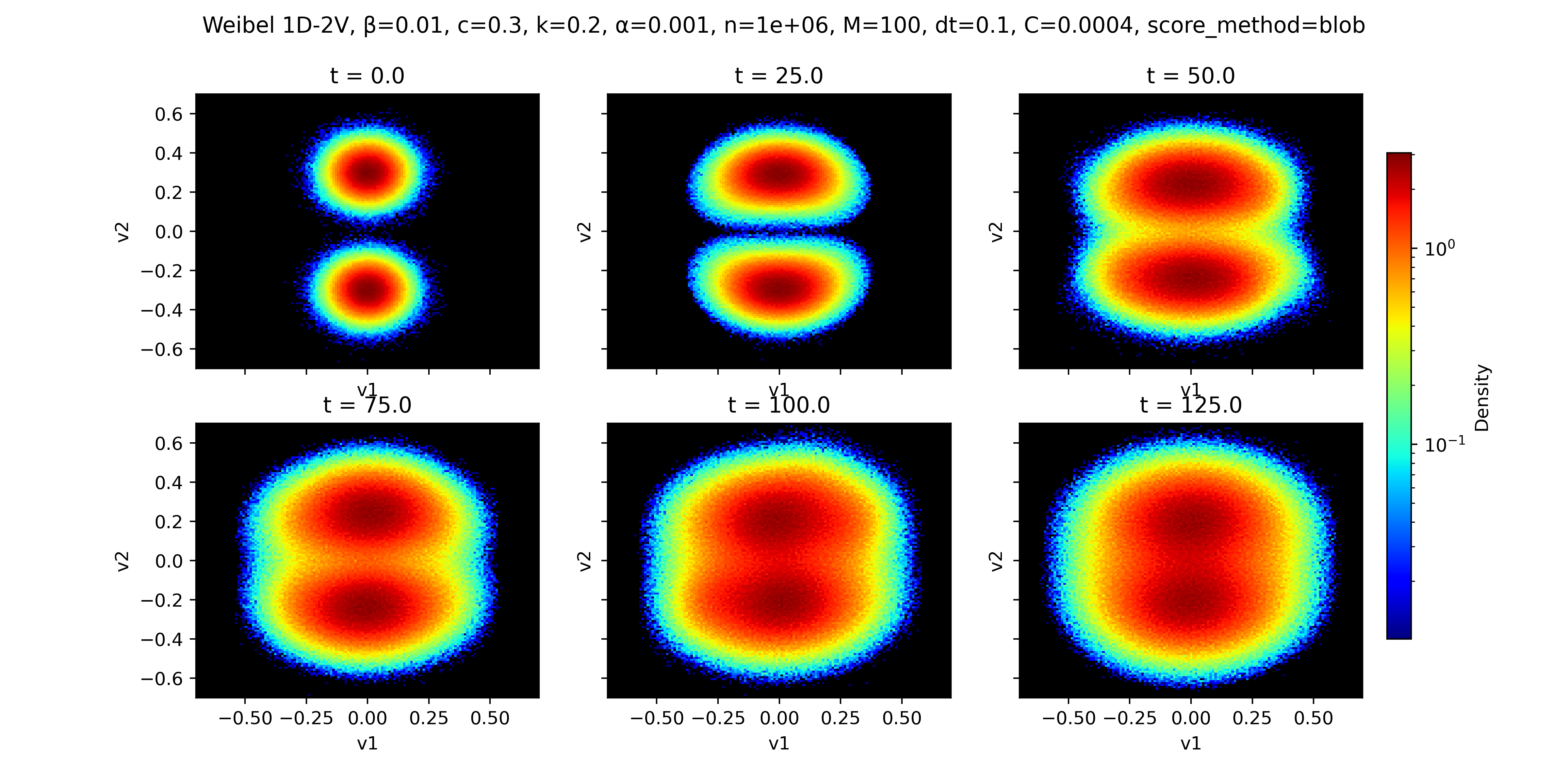}
\caption{Blob, $\nu = 4 \times 10^{-4}$}
\end{subfigure}
\hfill
\begin{subfigure}{0.48\linewidth}
\includegraphics[width=\linewidth]{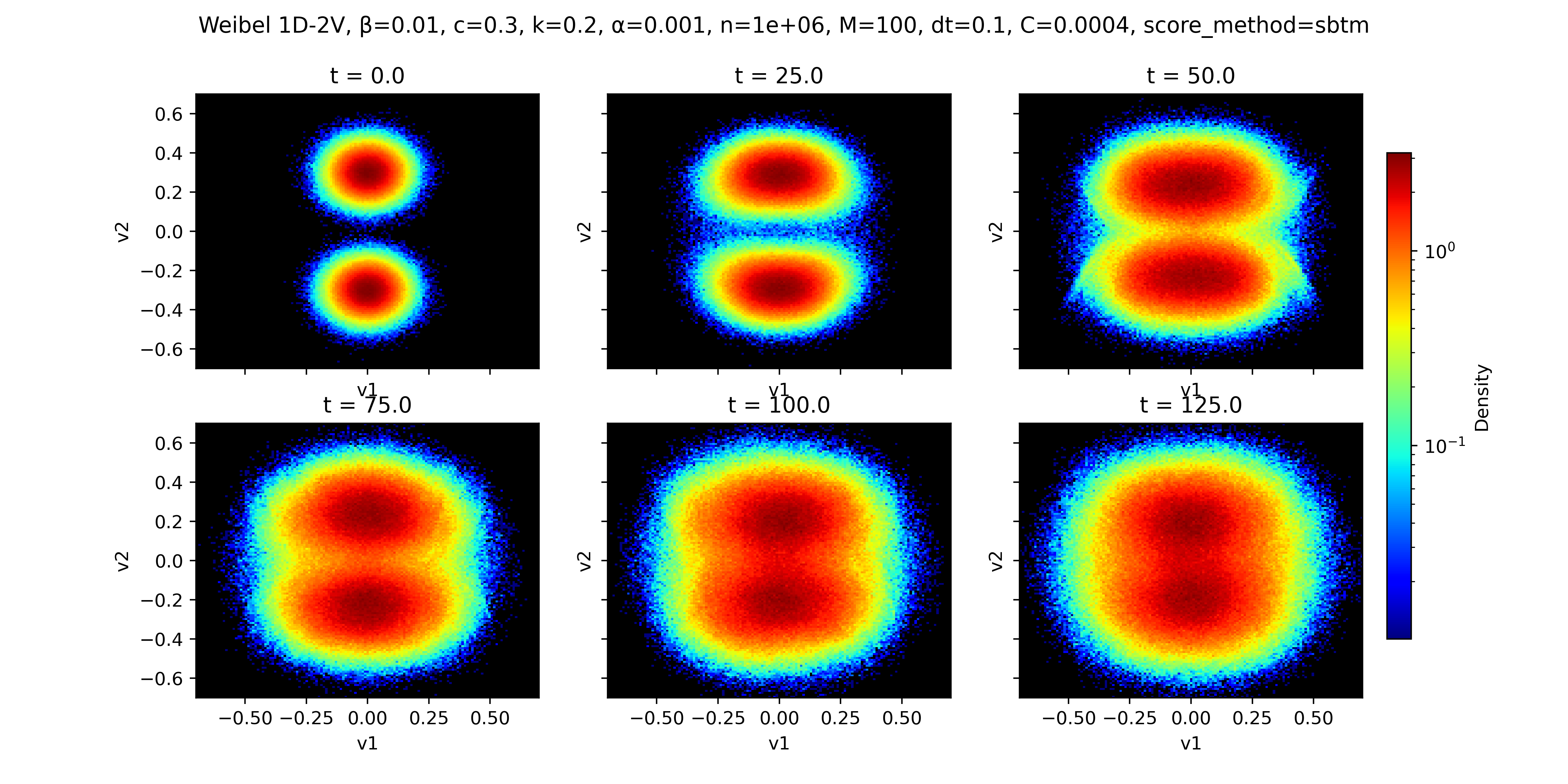}
\caption{SBTM, $\nu = 4 \times 10^{-4}$}
\end{subfigure}
\begin{subfigure}{0.48\linewidth}
\includegraphics[width=\linewidth]{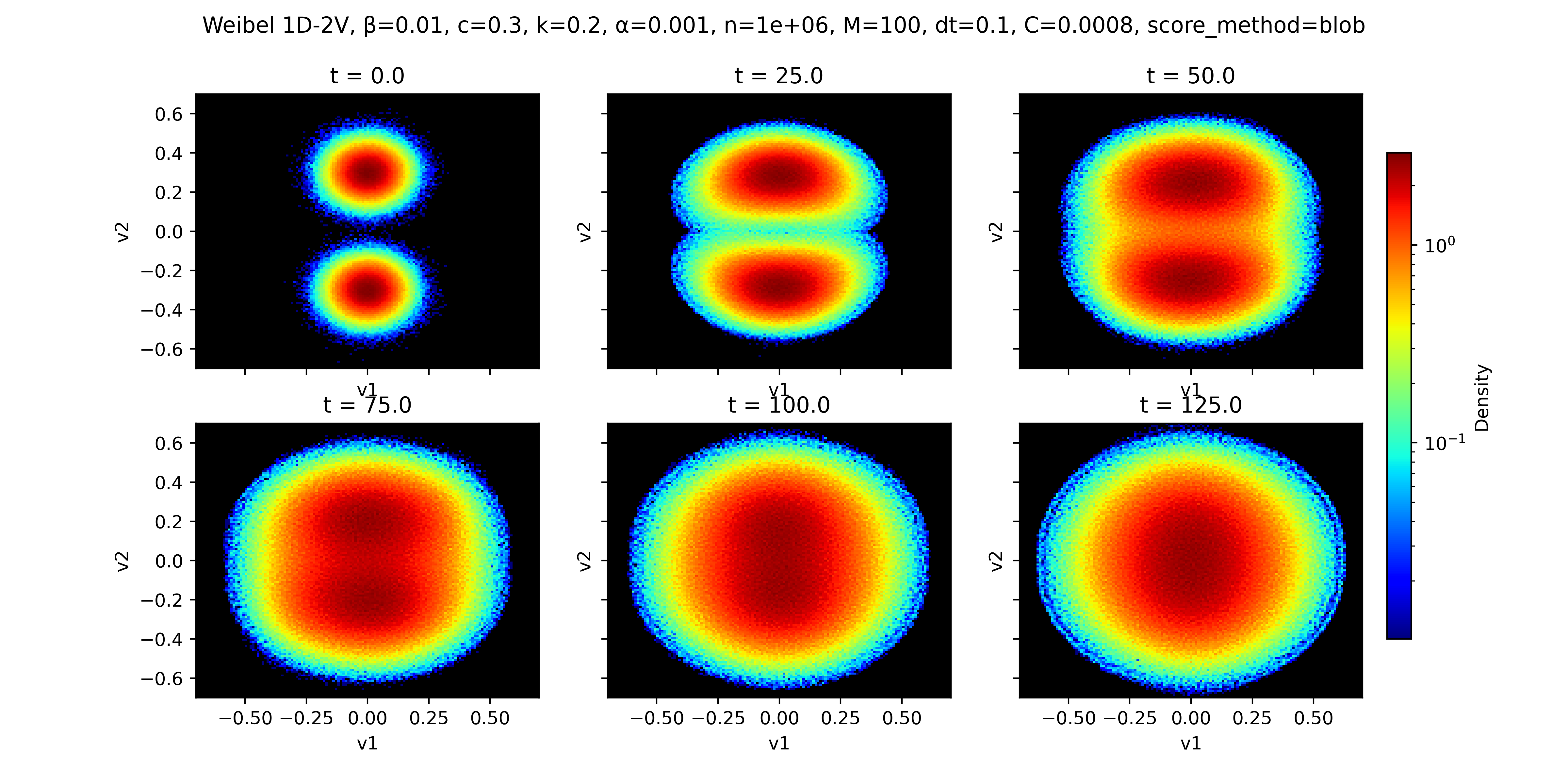}
\caption{Blob, $\nu = 8 \times 10^{-4}$}
\end{subfigure}
\hfill
\begin{subfigure}{0.48\linewidth}
\includegraphics[width=\linewidth]{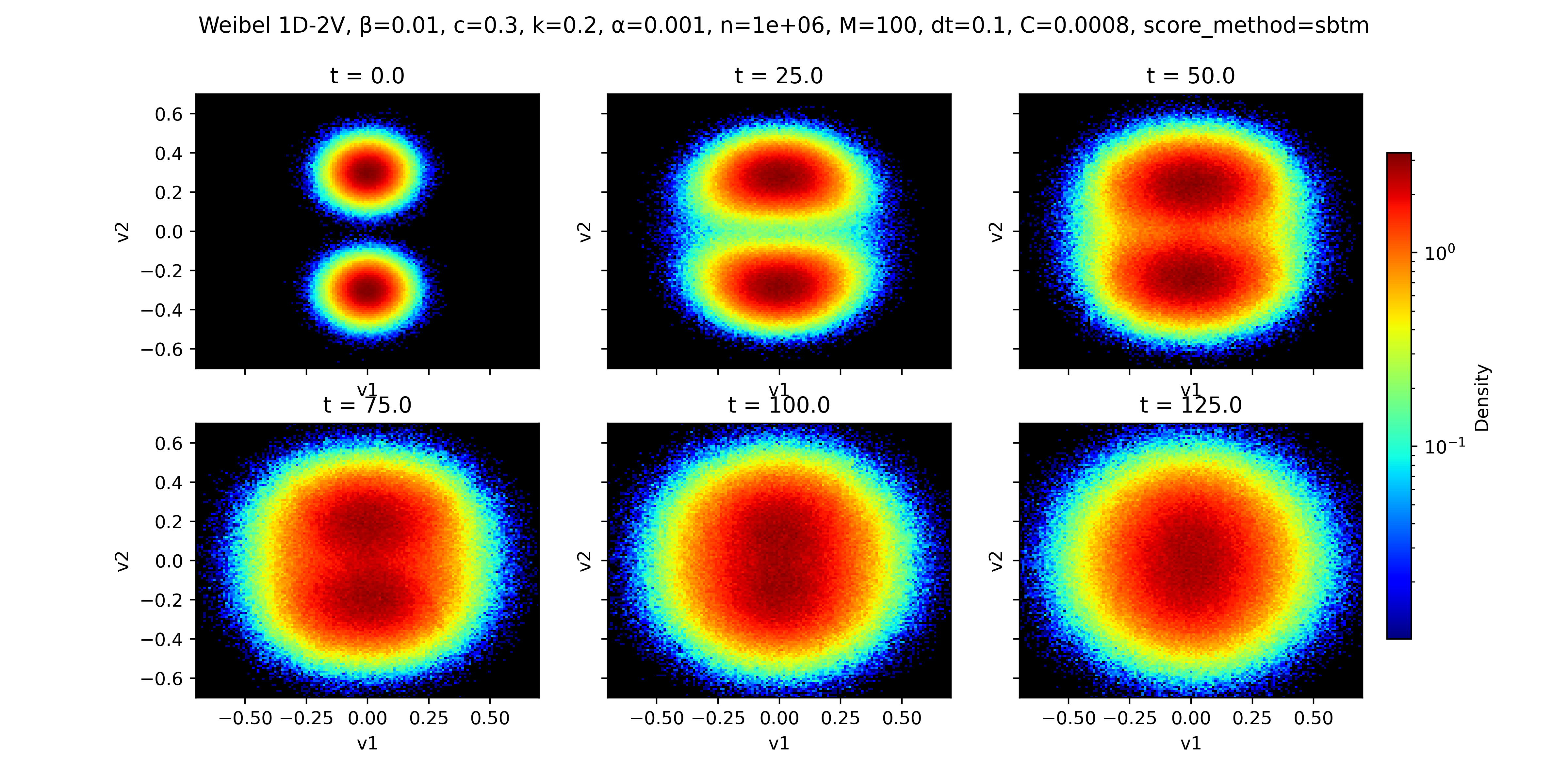}
\caption{SBTM, $\nu = 8 \times 10^{-4}$}
\end{subfigure}
\caption{Weibel instability, $d_v = 2$: $(v_1, v_2)$-marginal density across collision frequencies.  At $\nu = 0$ both methods are identical.  As $\nu$ increases, the blob method produces increasingly non-Gaussian densities, while SBTM equilibrates correctly.}
\label{fig:weibel_sweep_dv2}
\end{figure}

\begin{figure}[h]
\centering
\begin{subfigure}{0.48\linewidth}
\includegraphics[width=\linewidth]{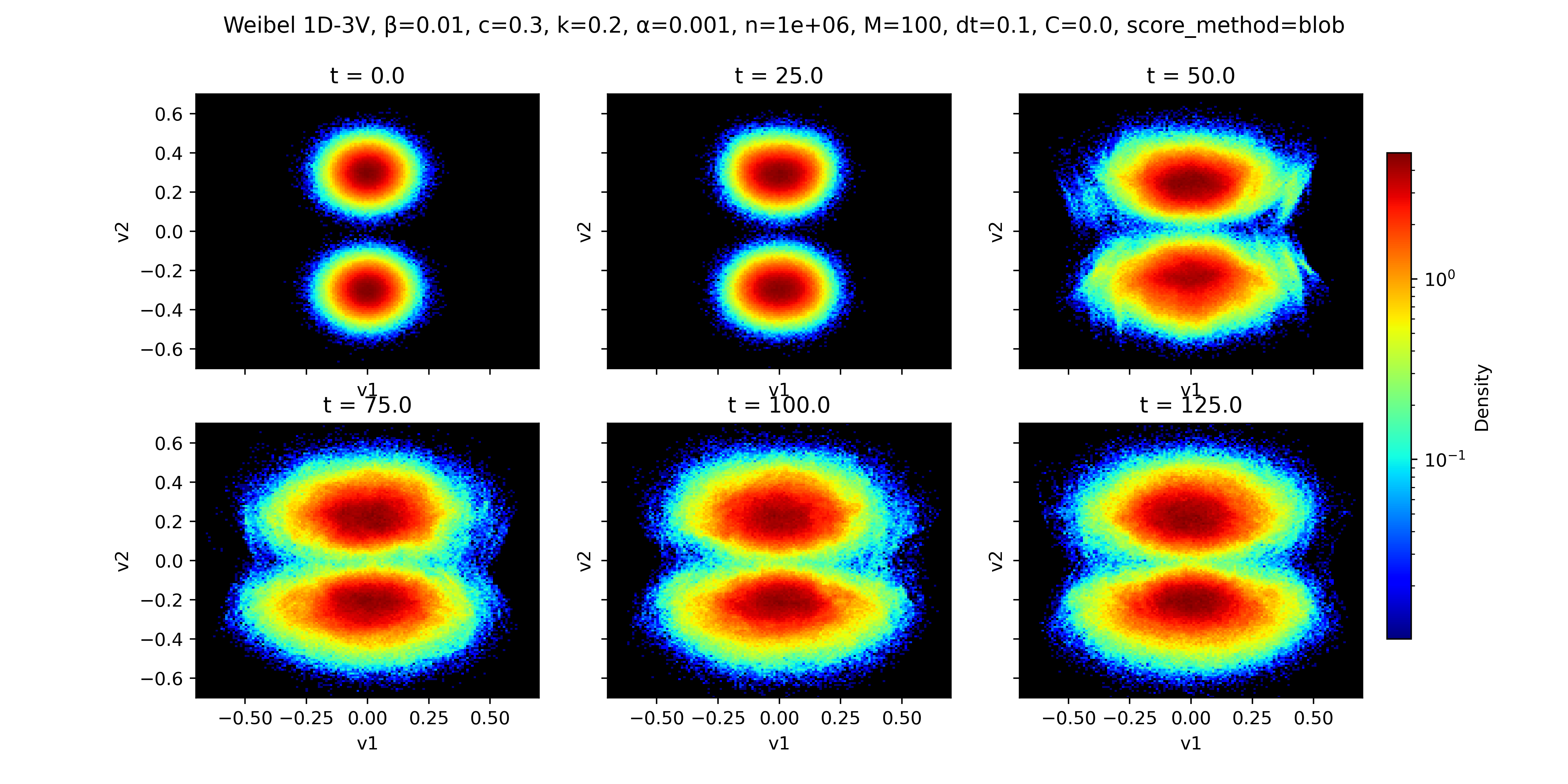}
\caption{Blob, $\nu = 0$}
\end{subfigure}
\hfill
\begin{subfigure}{0.48\linewidth}
\includegraphics[width=\linewidth]{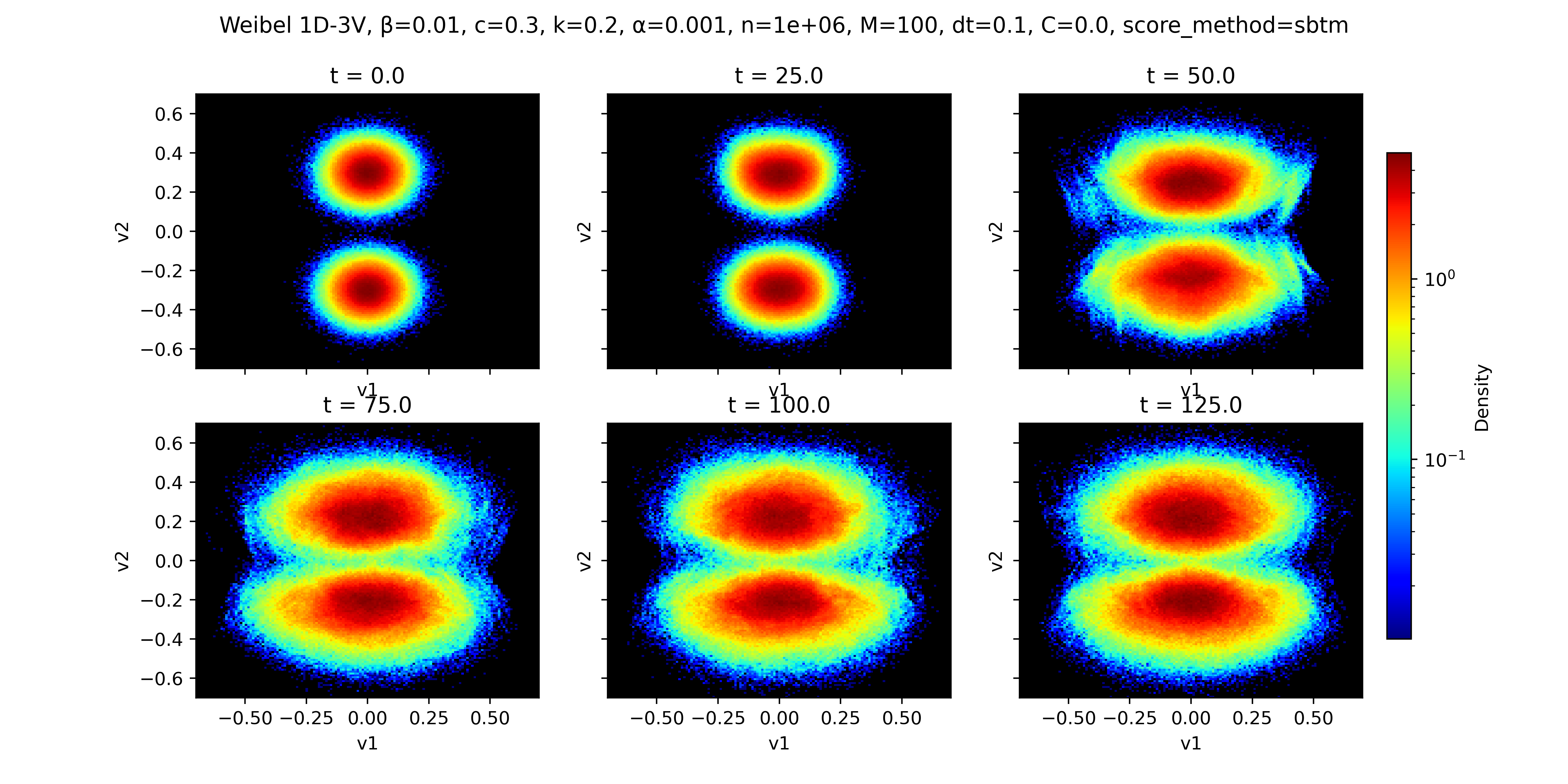}
\caption{SBTM, $\nu = 0$}
\end{subfigure}
\begin{subfigure}{0.48\linewidth}
\includegraphics[width=\linewidth]{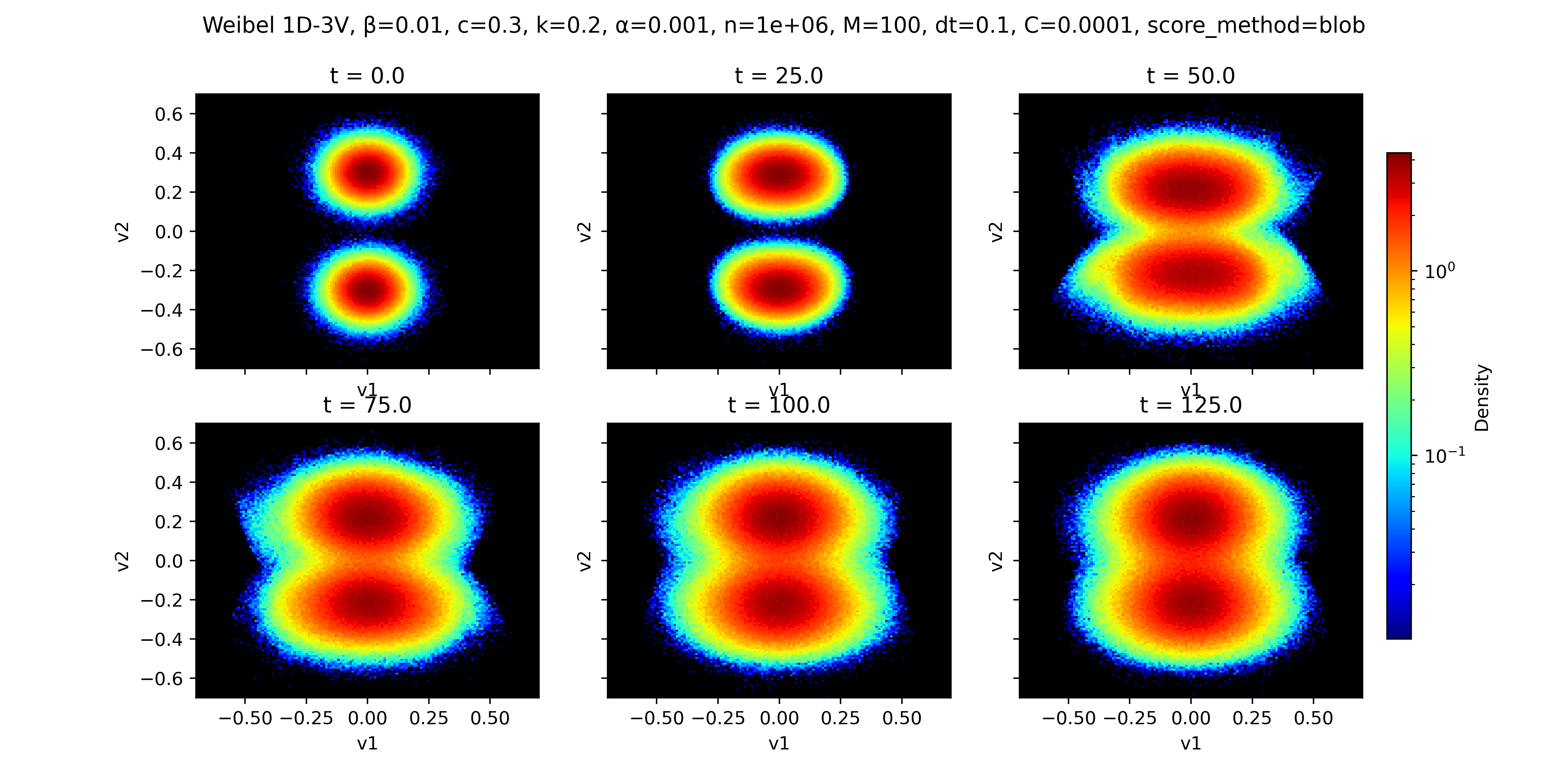}
\caption{Blob, $\nu = 10^{-4}$}
\end{subfigure}
\hfill
\begin{subfigure}{0.48\linewidth}
\includegraphics[width=\linewidth]{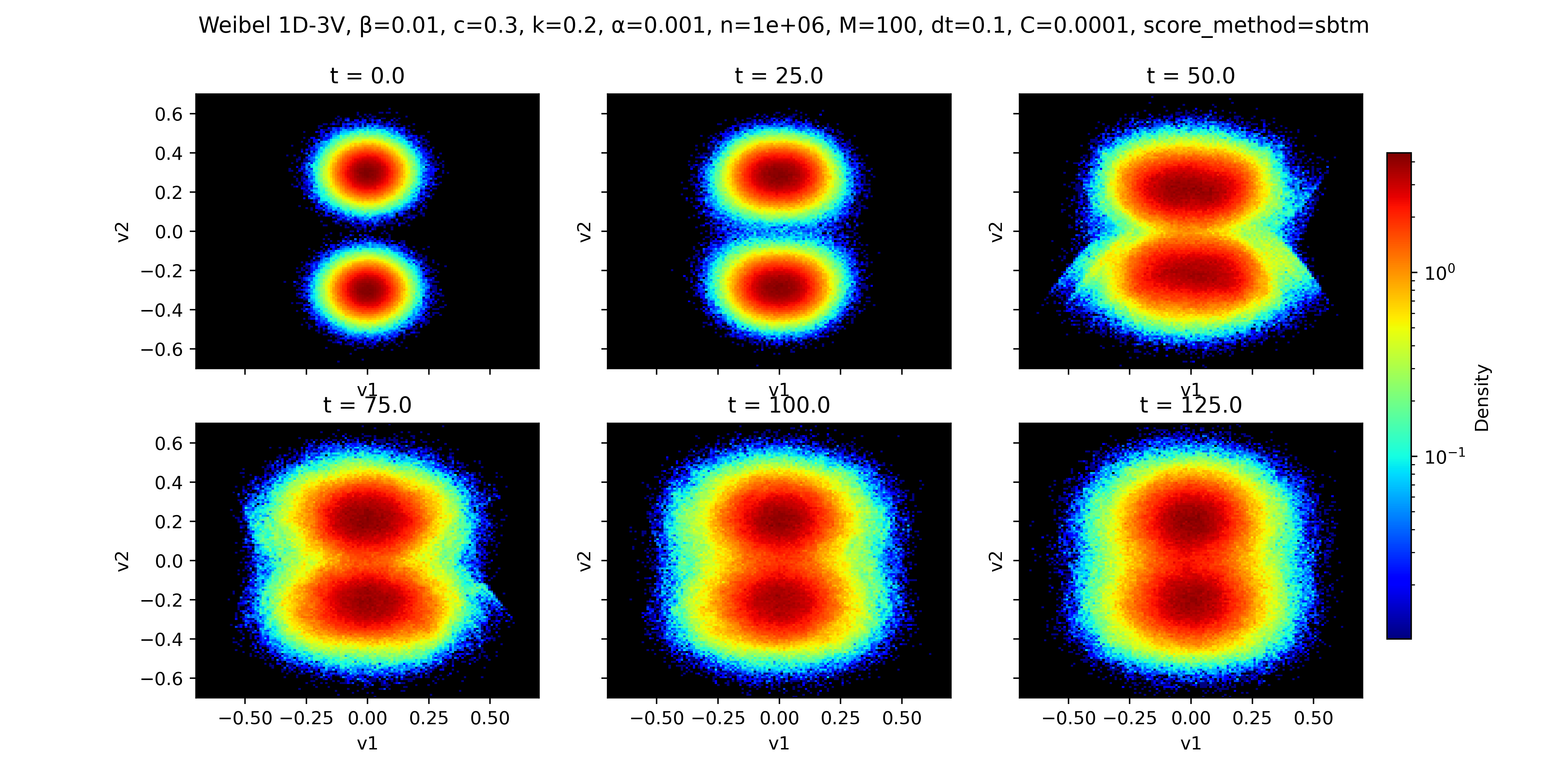}
\caption{SBTM, $\nu = 10^{-4}$}
\end{subfigure}
\begin{subfigure}{0.48\linewidth}
\includegraphics[width=\linewidth]{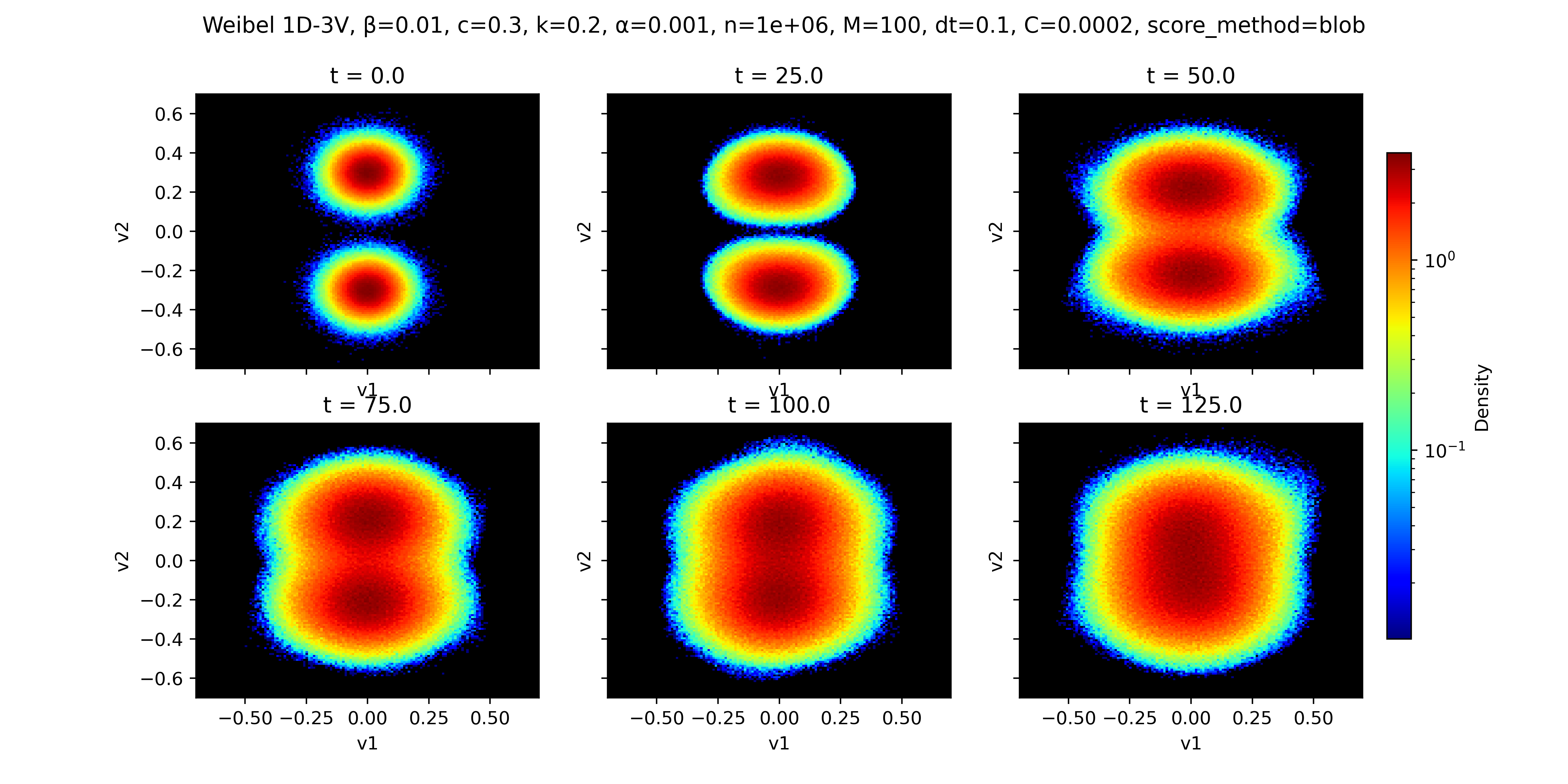}
\caption{Blob, $\nu = 2 \times 10^{-4}$}
\end{subfigure}
\hfill
\begin{subfigure}{0.48\linewidth}
\includegraphics[width=\linewidth]{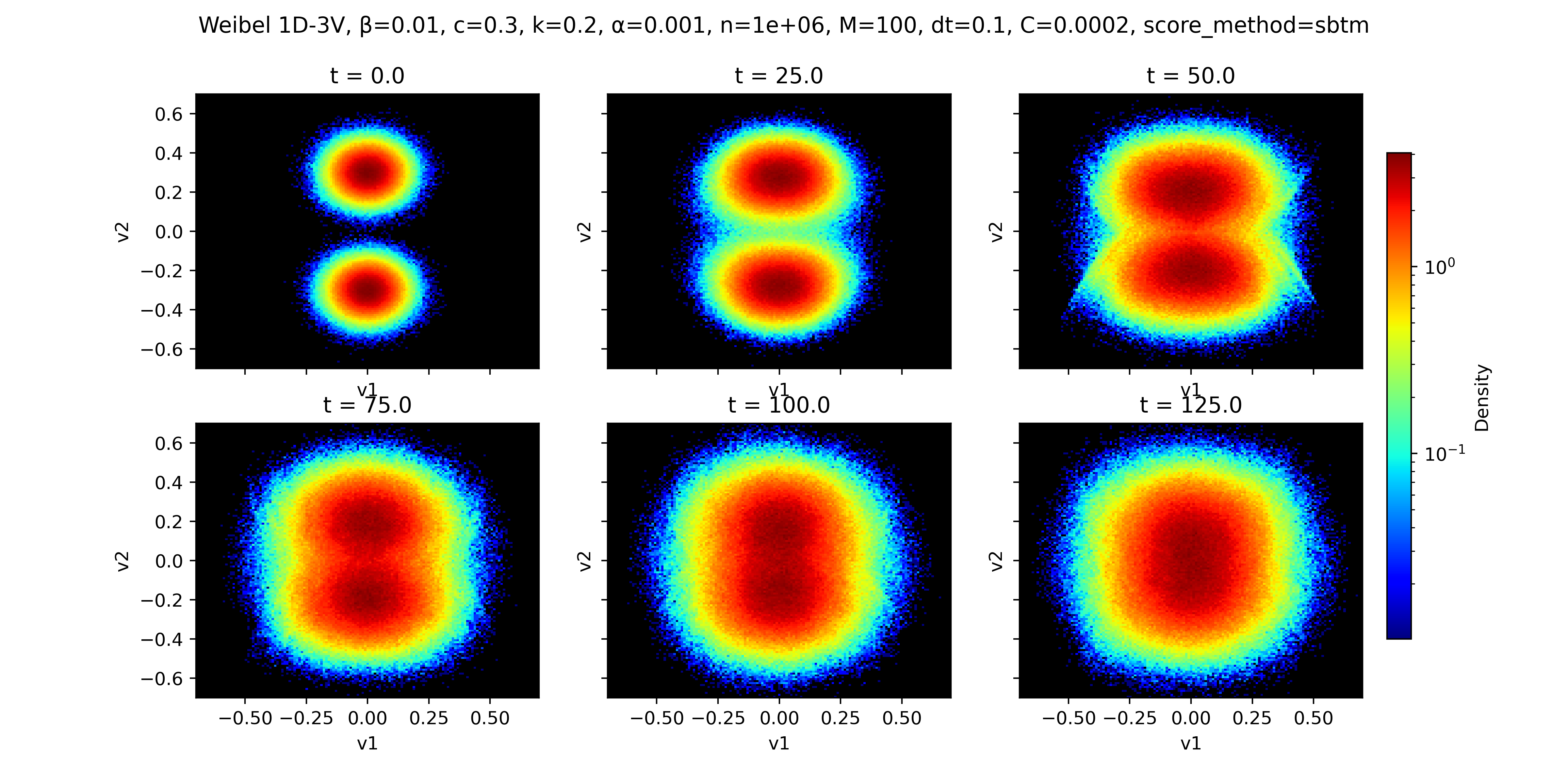}
\caption{SBTM, $\nu = 2 \times 10^{-4}$}
\end{subfigure}
\begin{subfigure}{0.48\linewidth}
\includegraphics[width=\linewidth]{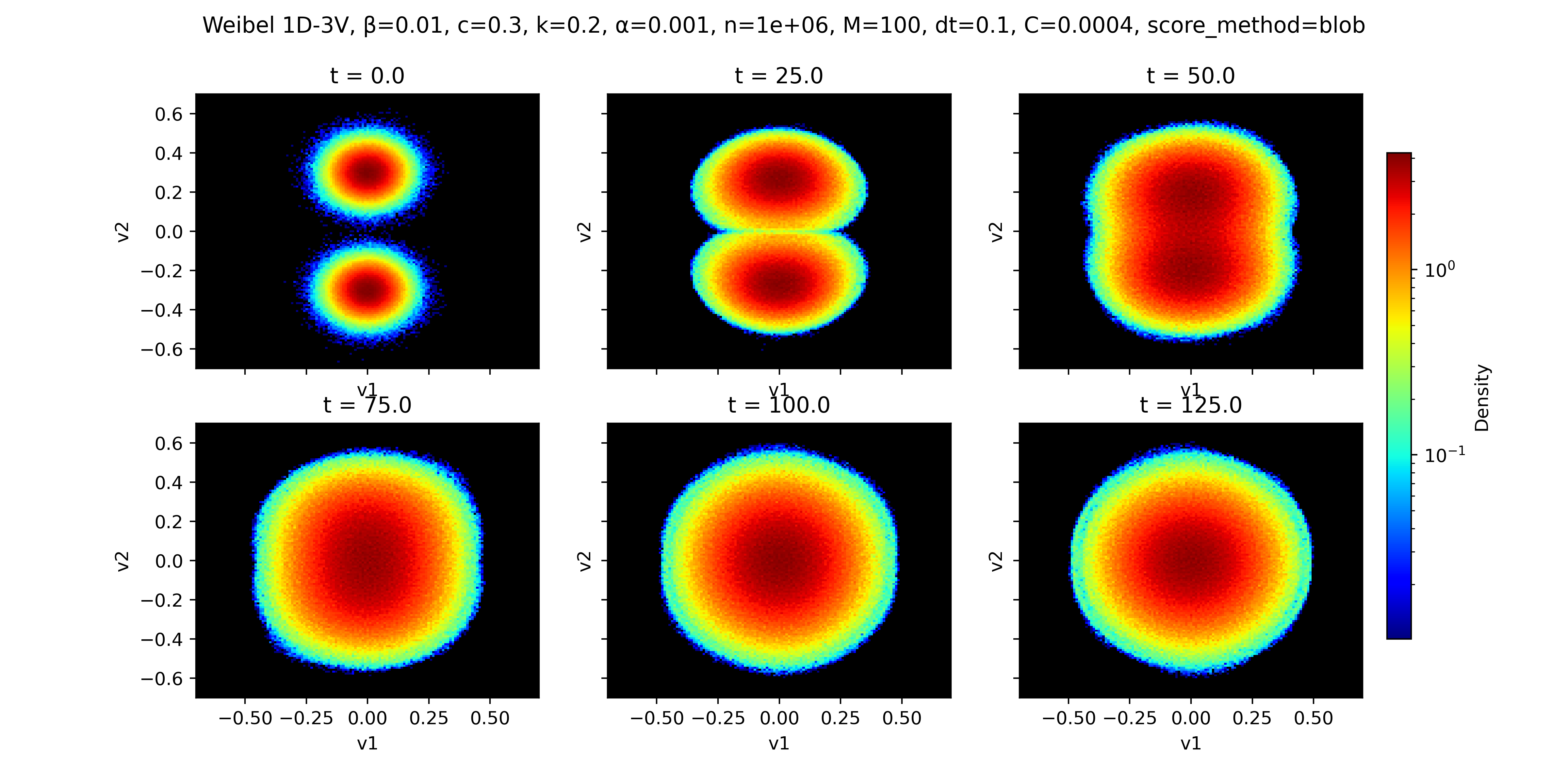}
\caption{Blob, $\nu = 4 \times 10^{-4}$}
\end{subfigure}
\hfill
\begin{subfigure}{0.48\linewidth}
\includegraphics[width=\linewidth]{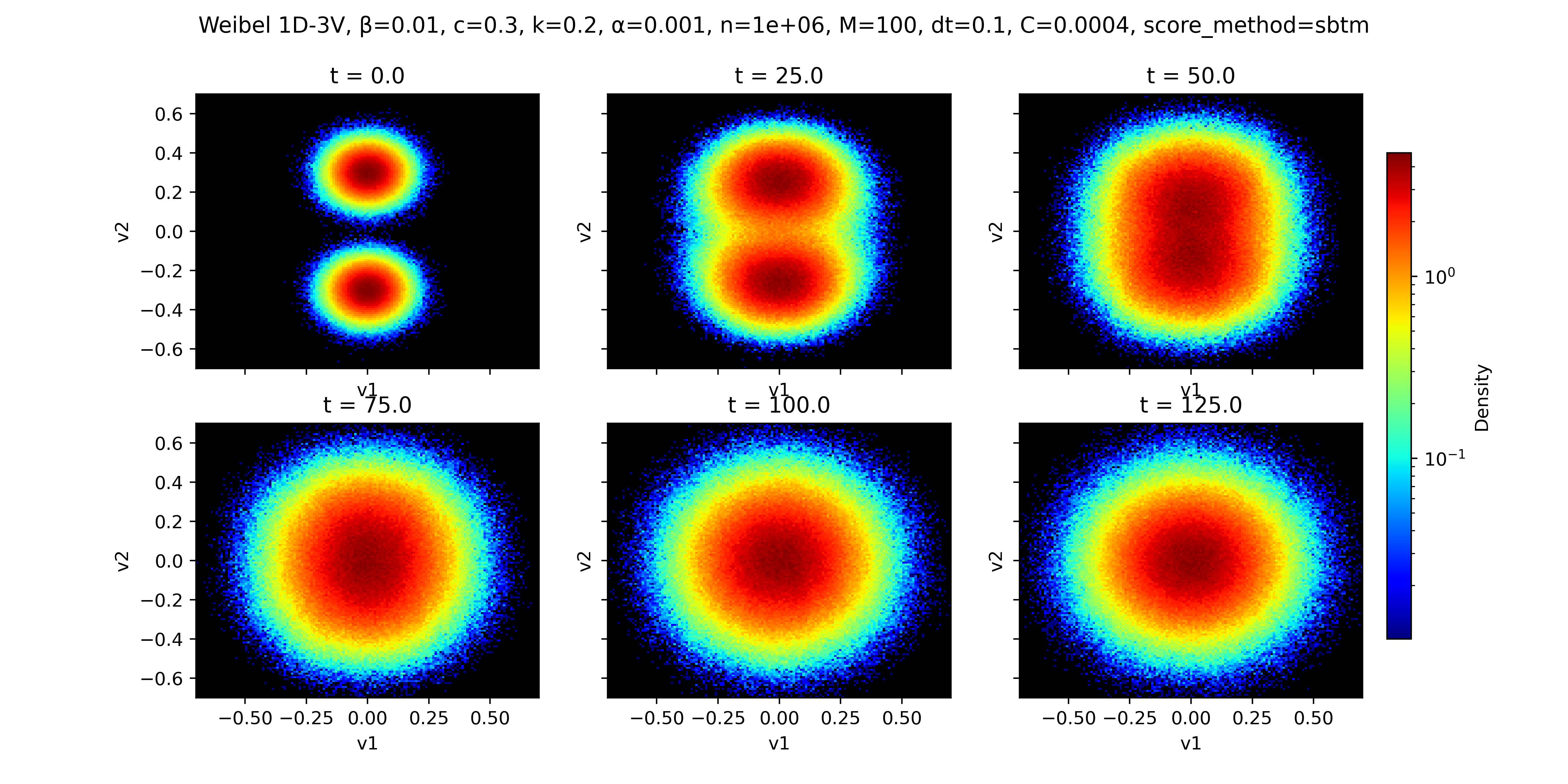}
\caption{SBTM, $\nu = 4 \times 10^{-4}$}
\end{subfigure}
\begin{subfigure}{0.48\linewidth}
\includegraphics[width=\linewidth]{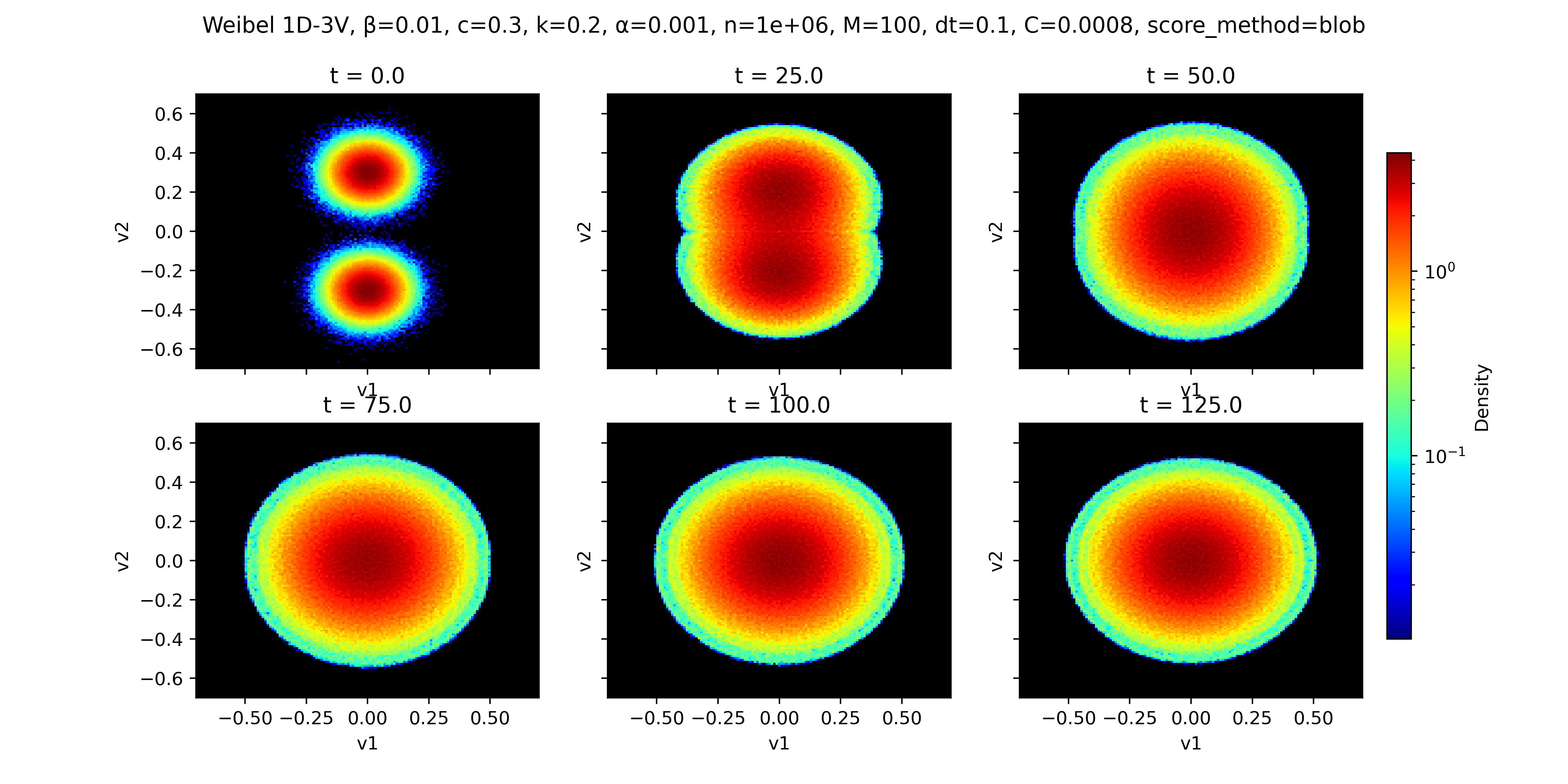}
\caption{Blob, $\nu = 8 \times 10^{-4}$}
\end{subfigure}
\hfill
\begin{subfigure}{0.48\linewidth}
\includegraphics[width=\linewidth]{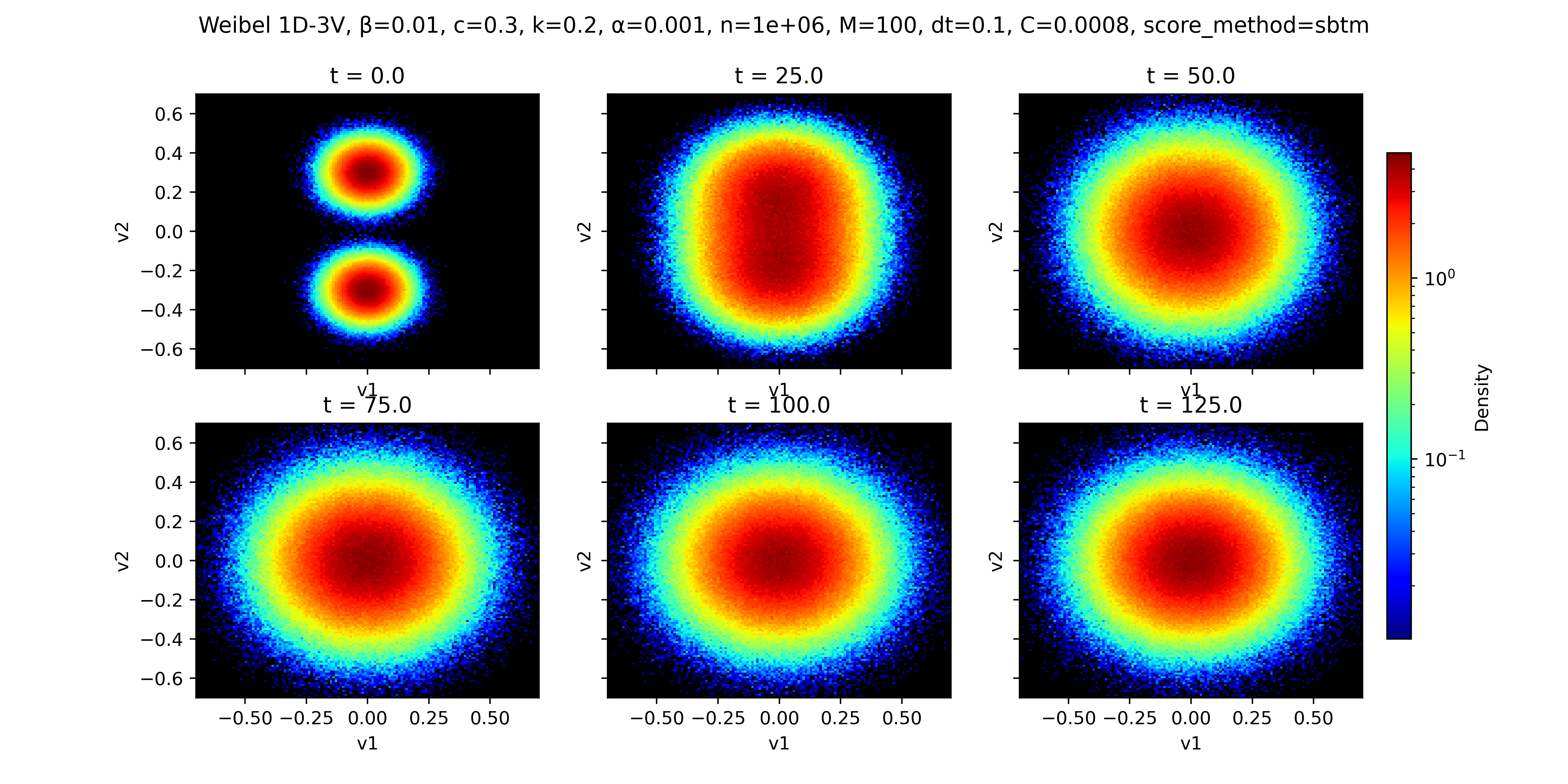}
\caption{SBTM, $\nu = 8 \times 10^{-4}$}
\end{subfigure}
\caption{Weibel instability, $d_v = 3$: $(v_1, v_2)$-marginal density across collision frequencies.  The blob method's equilibration failure is more pronounced than in $d_v = 2$ (Figure~\ref{fig:weibel_sweep_dv2}), indicating that SBTM scales better to higher dimensions.}
\label{fig:weibel_sweep_dv3}
\end{figure}

\FloatBarrier

\subsection{Computational Efficiency}
\label{sec:efficiency}

Table~\ref{tab:efficiency} compares runtime and peak GPU memory for the two-stream instability test case.  Both methods share the same $O(n^2)$ collision force evaluation (per spatial cell), which dominates the overall cost. The difference lies in score estimation: the blob method requires forming and storing pairwise kernel evaluations ($O(n^2)$ memory per cell), whereas SBTM amortizes the score via a neural network with $O(1)$ memory in $n$. As a result, SBTM achieves a $1.57\times$ speedup at $n = 10^6$ and uses $2$ -- $4\times$ less memory across all particle counts, with the memory advantage growing at larger~$n$.
\begin{table}[H]
\begin{center}
\begin{tabular}{lcccc}
\toprule
& \multicolumn{2}{c}{\textbf{Runtime} ($n = 10^6$)} & \multicolumn{2}{c}{\textbf{Peak Memory (GB)}} \\
\cmidrule(lr){2-3} \cmidrule(lr){4-5}
\textbf{Method} & Time & Speedup & $n = 10^6$ & $n = 3 \times 10^6$ \\
\midrule
Blob (KDE)  & 11h 27m & --          & 16.4 & 65.1 \\
SBTM        & 7h 17m  & $1.57\times$ & 4.4  & 17.2 \\
\bottomrule
\end{tabular}
\caption{Runtime and memory comparison for the two-stream instability test case ($M = 100$, $\Delta t = 0.05$, $d_v = 3$, $t_{\mathrm{final}} = 50$).}
\label{tab:efficiency}
\end{center}
\end{table}

\section{Conclusion}
\label{sec:conclusion}

We introduced SBTM as a drop-in replacement for the blob method in deterministic particle simulations of the Vlasov--Maxwell--Landau system.  The two methods share the same PIC framework and differ only in how the velocity score is estimated: a kernel density estimate (blob) versus a neural network trained on-the-fly via implicit score matching (SBTM).  This single change yields consistent improvements across all three benchmarks: SBTM relaxes correctly to Maxwellian equilibrium, converges with fewer particles, and runs faster with lower memory.  On the theoretical side, we proved that the discretized collision operator preserves momentum and kinetic energy for any score approximation, characterized the unique global steady states of the VML and VPL systems (Theorems~\ref{thm:equilibrium} and~\ref{thm:equilibrium_vpl}), and formally verified Theorem~\ref{thm:equilibrium} in the Lean~4 proof assistant \citep{ilin2026formalization}.

Several limitations of the current work suggest natural directions for future research.  The time integration uses forward Euler, which does not conserve energy at the fully discrete level; adopting the energy-conserving explicit scheme of \citep{yoo2025explicit} would address this at the cost of additional collision operator evaluations per time step.  All experiments are restricted to the 1D-in-space setting with a single electron species and an immobile ion background; extensions to multiple spatial dimensions and multi-species plasmas are important for practical applications.  

\section*{Declaration of generative AI use}
The proofs of Theorems~\ref{thm:equilibrium} and~\ref{thm:equilibrium_vpl} were obtained with the assistance of Gemini DeepThink.  

\section*{Acknowledgement}
The work of J. Hu was partially supported by DOE grant DE-SC0023164, NSF grants DMS-2409858
and IIS-2433957, and DoD MURI grant FA9550-24-1-0254.

\appendix
\section{Proofs of Equilibrium Theorems}
\label{app:proofs_equilibrium}

In this appendix, we provide complete proofs of Theorems~\ref{thm:equilibrium} and~\ref{thm:equilibrium_vpl}. The formal verification of Theorem \ref{thm:equilibrium} in the Lean~4 proof assistant is carried out in \citep{ilin2026formalization}.

\begin{proof}[Proof of Theorem~\ref{thm:equilibrium}]
The proof proceeds in two stages: (1)~the Landau entropy dissipation forces $f$ to be a local Maxwellian in~$v$, and (2)~the Vlasov--Maxwell equations constrain the spatial dependence, forcing a global Maxwellian.

\textbf{Step 1: Entropy dissipation and the local Maxwellian.}
At equilibrium, $\partial_t f = \partial_t E = \partial_t B = 0$, and the equation \eqref{eq:vlasov}  reduces to
\begin{equation}
\label{eq:steady_vlasov}
v \cdot \nabla_x f + (E + v \times B) \cdot \nabla_v f = \nu \, Q[f, f].
\end{equation}
Multiply by $\log f$ and integrate over phase space $\mathbb{T}^3 \times \R^3$:
\[
\iint_{\mathbb{T}^3 \times \R^3} \!\Big[ v \cdot \nabla_x f + (E + v \times B) \cdot \nabla_v f \Big] \log f \, dv \, dx = \nu \iint_{\mathbb{T}^3 \times \R^3} Q[f,f] \, \log f \, dv \, dx.
\]
Using $\nabla(f \log f - f) = \log f \, \nabla f$, the spatial transport term integrates to zero by the divergence theorem on the periodic domain $\mathbb{T}^3$.  Integrating the Lorentz force term by parts with respect to $v$ yields a factor $\nabla_v \cdot (E + v \times B)$; since $\nabla_v \cdot E = 0$ and $\nabla_v \cdot (v \times B) = 0$, the entire left-hand side vanishes, leaving the global entropy production constraint:
\begin{equation}
\label{eq:zero_dissipation}
\int_{\mathbb{T}^3} \!\left(\int_{\R^3} Q[f,f] \, \log f \, dv \right) dx = 0.
\end{equation}
By the H-theorem \eqref{eq:H-theorem} for the Landau operator, $\int_{\R^3} Q[f,f] \log f \, dv \leq 0$ pointwise in $x$.  Since its spatial integral is zero, it must vanish identically.  The null space of the Landau operator is spanned by the collisional invariants $\{1, v, |v|^2\}$ \citep{villani2002review}, meaning $\log f$ must be a linear combination of these:
\[
\log f(x,v) = a(x) + b(x) \cdot v + c(x)|v|^2.
\]
By mapping the coefficients $a(x)$, $b(x)$, and $c(x)$ to the macroscopic fluid fields, we rewrite this as a \emph{local Maxwellian} \citep[Ch.~II.7, Eq.~7.8]{cercignani1988boltzmann}:
\begin{equation}
\label{eq:local_maxwellian}
f(x, v) = \frac{\rho(x)}{(2\pi T(x))^{3/2}} \exp\!\left(-\frac{|v - u(x)|^2}{2T(x)}\right),
\end{equation}
with density $\rho(x) = \int_{\R^{3}} f \, dv > 0$, bulk velocity $u(x) = \frac{1}{\rho}\int_{\R^{3}} v f \, dv$, and temperature $T(x) = \frac{1}{3\rho}\int_{\R^{3}} |v - u|^2 f \, dv > 0$.

\textbf{Step 2: Vlasov kinematics (polynomial matching).}
Since $f$ is a local Maxwellian, $Q[f,f] = 0$.  The Vlasov equation simplifies to
\[
v \cdot \nabla_x (\log f) + (E + v \times B) \cdot \nabla_v (\log f) = 0.
\]
Substituting $\log f = a(x) + b(x) \cdot v + c(x)|v|^2$ and computing the gradients, noting that $(v \times B) \cdot v = 0$ and $(v \times B) \cdot b = v \cdot (B \times b)$, the equation reduces to a polynomial in $v$:
\begin{equation}
\label{eq:poly_v}
(v \cdot \nabla_x c)|v|^2 + \sum_{i,j=1}^{3} v_i v_j \, \partial_{x_i} b_j + v \cdot \!\left(\nabla_x a + 2c\, E + B \times b\right) + E \cdot b = 0.
\end{equation}
Since the magnetic component of the Lorentz force is perpendicular to $v$, it contributes nothing to the quadratic or cubic velocity terms.  Therefore, matching the highest-order coefficients follows the standard unforced kinematic derivation \citep[Ch.~III.10, Eqs.~10.6--10.7]{cercignani1988boltzmann}.
For this to hold for all $v \in \R^3$, the coefficient of every homogeneous degree of $v$ must vanish identically.

\textbf{Step 3: Enforcing macroscopic uniformity.}

\textbf{(a) $\mathcal{O}(|v|^3)$ terms: temperature is constant.}  The cubic terms give $v \cdot \nabla_x c = 0$, hence $\nabla_x c = 0$ \citep[Eq.~10.7]{cercignani1988boltzmann}.  Since $c = -1/(2T)$, we conclude $T(x) \equiv T_\infty$ is globally constant.

\textbf{(b) $\mathcal{O}(|v|^2)$ terms: bulk velocity is constant.}  With $c$ constant, the quadratic terms give $\sum_{i,j} v_i v_j \, \partial_{x_i} b_j = 0$.  This yields Killing's equation $\partial_{x_i} b_j + \partial_{x_j} b_i = 0$ \citep[Eq.~10.6]{cercignani1988boltzmann}.  Integrating the sum of squares of these derivatives over $\mathbb{T}^3$ and integrating by parts shows that $\partial_{x_i} b_j = 0$ everywhere.  Thus $b(x)$ is constant, meaning the bulk velocity $u(x) \equiv u_\infty$ is uniform.

\textbf{(c) $\mathcal{O}(|v|^1)$ terms: force balance.}  With constant $T_\infty$ and $u_\infty$, substituting $c = -1/(2T_\infty)$ and $b = u_\infty / T_\infty$ yields the macroscopic force balance:
\begin{equation}
\label{eq:force_balance}
\nabla_x \log \rho = \frac{1}{T_\infty}(E + u_\infty \times B).
\end{equation}

\textbf{Step 4: Amp\`ere's law and the nullification of bulk velocity.}
Since $u_\infty$ is constant, the current density is $J(x) = \rho(x) \, u_\infty$.  Amp\`ere's law at steady state gives $\nabla_x \times B = J = \rho(x) \, u_\infty$ \citep[cf.][Eqs.~60--62]{guo2003vlasov}.  Taking the dot product with $u_\infty$:
\[
u_\infty \cdot (\nabla_x \times B) = \rho(x) |u_\infty|^2.
\]
Using the vector identity $\nabla_x \cdot (B \times u_\infty) = u_\infty \cdot (\nabla_x \times B) - B \cdot (\nabla_x \times u_\infty)$ and noting $\nabla_x \times u_\infty = 0$:
\[
\nabla_x \cdot (B \times u_\infty) = \rho(x)|u_\infty|^2.
\]
Integrating over the periodic domain $\mathbb{T}^3$, the exact divergence vanishes:
\[
0 = |u_\infty|^2 \int_{\mathbb{T}^3} \rho(x) \, dx.
\]
Because $\rho(x) > 0$ strictly, the integral is positive.  Therefore $|u_\infty|^2 = 0$, i.e., the bulk velocity vanishes: $u_\infty = 0$.

\textbf{Step 5: Electrostatic balance and the maximum principle.}
With $u_\infty = 0$, the force balance~\eqref{eq:force_balance} reduces to $\nabla_x \log \rho = E / T_\infty$.  Taking the divergence and substituting Gauss's law $\nabla_x \cdot E = \rho - \rho_{\mathrm{ion}}$ yields an elliptic PDE for the density \citep[cf.][Eq.~142]{guo2012vlasov}:
\begin{equation}
\label{eq:poisson_boltzmann}
T_\infty \, \Delta_x (\log \rho) = \rho(x) - \rho_{\mathrm{ion}}.
\end{equation}
Because the periodic domain $\mathbb{T}^3$ is compact and $\rho(x)$ is smooth and strictly positive, $\rho$ achieves a global maximum and a global minimum.
\begin{itemize}
\item At the global maximum $x_{\max}$: $\Delta_x \log \rho \leq 0$, so $\rho(x_{\max}) \leq \rho_{\mathrm{ion}}$.
\item At the global minimum $x_{\min}$: $\Delta_x \log \rho \geq 0$, so $\rho(x_{\min}) \geq \rho_{\mathrm{ion}}$.
\end{itemize}
Combining: $\rho_{\mathrm{ion}} \leq \rho(x_{\min}) \leq \rho(x) \leq \rho(x_{\max}) \leq \rho_{\mathrm{ion}}$, forcing $\rho(x) \equiv \rho_{\mathrm{ion}}$.  Since the density is constant, $\nabla_x \log \rho = 0$, and plugging back into the force balance gives $E(x) = 0$ everywhere.

\textbf{Step 6: Uniformity of the magnetic field.}
With $u_\infty = 0$, the macroscopic current $J = 0$, so Amp\`ere's law reduces to $\nabla_x \times B = 0$.  Coupled with $\nabla_x \cdot B = 0$, the vector Laplacian is
\[
\Delta_x B = \nabla_x(\nabla_x \cdot B) - \nabla_x \times (\nabla_x \times B) = 0.
\]
Multiplying by $B$ and integrating over $\mathbb{T}^3$ yields $-\int |\nabla_x B|^2 \, dx = 0$, so $\nabla_x B = 0$.  The magnetic field is a global constant: $B(x) \equiv B_\infty$.

A uniform magnetic field exerts no net force on the isotropic Maxwellian equilibrium: since $\nabla_v f_\infty \propto v \, f_\infty$, the magnetic force term $(v \times B_\infty) \cdot \nabla_v f_\infty \propto (v \times B_\infty) \cdot v = 0$ vanishes identically for any $B_\infty \in \R^3$.  Thus the steady-state Vlasov equation admits a one-parameter family of equilibria indexed by $B_\infty$.

\textbf{Step 7: Determination of $T_\infty$ and $B_\infty$.}
The specific equilibrium selected by the dynamics is uniquely fixed by two conservation laws.  First, Faraday's law $\partial_t B = -\nabla_x \times E$ implies that the spatial mean of $B$ is conserved: $\frac{d}{dt}\int_{\mathbb{T}^3} B \, dx = -\int_{\mathbb{T}^3} \nabla_x \times E \, dx = 0$ by periodicity \citep[cf.][p.~596]{guo2003vlasov}.  At equilibrium $B \equiv B_\infty$, so
\begin{equation}
\label{eq:B_conservation}
B_\infty = \frac{1}{|\mathbb{T}^3|}\int_{\mathbb{T}^3} B_{\mathrm{init}}(x) \, dx.
\end{equation}
Second, with $E = 0$ and $B \equiv B_\infty$, the total energy is
\[
\mathcal{E} = \iint \frac{|v|^2}{2} f_\infty \, dv \, dx + \frac{|B_\infty|^2}{2} |\mathbb{T}^3| = \frac{3}{2}\, \rho_{\mathrm{ion}} \, T_\infty \, |\mathbb{T}^3| + \frac{|B_\infty|^2}{2} |\mathbb{T}^3|.
\]
Since $\mathcal{E}$ is conserved \eqref{eq:energy-consv} and $B_\infty$ is fixed by the initial data, $T_\infty$ is uniquely determined:
\begin{equation}
\label{eq:T_infty}
T_\infty = \frac{2}{3\, \rho_{\mathrm{ion}} \, |\mathbb{T}^3|} \left(\mathcal{E} - \frac{|B_\infty|^2}{2} |\mathbb{T}^3|\right).
\end{equation}
Positivity of $T_\infty$ is guaranteed by Jensen's inequality: $|B_\infty|^2 \leq \frac{1}{|\mathbb{T}^3|}\int |B_{\mathrm{init}}|^2 \, dx$, so the magnetic energy of the uniform field is at most the initial magnetic energy. \qedhere
\end{proof}

\begin{proof}[Proof of Theorem~\ref{thm:equilibrium_vpl}]
Steps~1--3 of the proof of Theorem~\ref{thm:equilibrium} carry over verbatim with $B = 0$: the magnetic field plays no role in the entropy dissipation argument (Step~1), and the magnetic force contributes nothing to the $\mathcal{O}(|v|^3)$ and $\mathcal{O}(|v|^2)$ terms of the polynomial matching (Steps~2--3).  Thus the steady state is again a local Maxwellian with globally constant temperature $T_\infty$ and uniform bulk velocity $u_\infty$, satisfying the force balance
\begin{equation}
\label{eq:force_balance_vpl}
\nabla_x \log \rho = \frac{E}{T_\infty}.
\end{equation}

\textbf{Step 4 breaks.}  In the VML system, Amp\`ere's law $\nabla_x \times B = \rho \, u_\infty$ forces $u_\infty = 0$ (by integrating over $\mathbb{T}^3$).  In the VPL system there is no magnetic field and no Amp\`ere's law.  Instead, the $\mathcal{O}(|v|^0)$ term of the polynomial~\eqref{eq:poly_v} (with $B = 0$) gives $E \cdot u_\infty / T_\infty = 0$.  But this imposes no constraint on $u_\infty$ once we establish $E = 0$.

\textbf{Step 5 carries over.}  The electrostatic balance and maximum principle argument is identical: from \eqref{eq:force_balance_vpl} and Gauss's law $\nabla_x \cdot E = \rho - \rho_{\mathrm{ion}}$, we obtain the same Poisson-Boltzmann equation \eqref{eq:poisson_boltzmann}, and the maximum principle forces $\rho \equiv \rho_{\mathrm{ion}}$ and $E = 0$.

\textbf{Step 6 is absent} (no magnetic field to constrain).

\textbf{Determination of $u_\infty$ and $T_\infty$.}  The VPL system is Galilean invariant (unlike the full VML system, which is Lorentz invariant), so a uniform bulk drift generates no back-reacting magnetic field.  Since the momentum is conserved \eqref{eq:moment-consv}, equating the equilibrium momentum to the initial value gives $u_\infty = \iint v \, f_{\mathrm{init}} \, dv \, dx / (\rho_{\mathrm{ion}}\, |\mathbb{T}^3|)$.  The equilibrium temperature then follows from conservation of total energy: $T_\infty = \frac{2}{3\, \rho_{\mathrm{ion}} \, |\mathbb{T}^3|}(\mathcal{E}_0 - \frac{1}{2}\rho_{\mathrm{ion}} |u_\infty|^2 |\mathbb{T}^3|)$.

\end{proof}

\bibliographystyle{plain}
\bibliography{references}

\end{document}